\documentclass[10.5pt,reqno]{article}
\usepackage{amsmath,amssymb,amsthm}
\usepackage{color}
\pagestyle{plain}
\def\R{\mathbb R}  \def\C{\mathbb C} \def\N{\mathbb N}
\newtheorem{thm}{Theorem}[section]
\newtheorem{lem}[thm]{Lemma}
\newtheorem{cor}[thm]{Corollary}
\newtheorem{defn}[thm]{Definition}

\newtheorem{rem}[thm]{Remark}
\setlength{\topmargin}{0cm}
\setlength{\oddsidemargin}{0cm}
\setlength{\evensidemargin}{0cm}
\setlength{\textwidth}{15.5truecm}
\setlength{\textheight}{21truecm}
\numberwithin{equation}{section}

\title{{\bf Continuous dependence of the Cauchy problem for the inhomogeneous nonlinear Schr\"{o}dinger equation in} $H^{s} (\R^{n} )$}
\author{{\bf JinMyong An, JinMyong Kim$^*$, KyuSong Chae}\\
\footnotesize{Faculty of Mathematics, {\bf Kim Il Sung} University, Pyongyang, Democratic People's Republic of Korea}\\
\footnotesize{$^*$ Corresponding Author}\\
\footnotesize{Email address: jm.kim0211@ryongnamsan.edu.kp}
}
\date{}
\begin{document}
\maketitle
\begin{abstract}
We consider the Cauchy problem for the inhomogeneous nonlinear Schr\"{o}dinger (INLS) equation
\[iu_{t} +\Delta u=|x|^{-b} f(u),\;u(0)\in H^{s} (\R^{n} ),\]
where $n\in \N$, $0<s<\min \{ n,\; 1+n/2\} $, $0<b<\min \{ 2,\;n-s,\;1+\frac{n-2s}{2} \} $ and $f(u)$ is a nonlinear function that behaves like $\lambda \left|u\right|^{\sigma } u$ with $\sigma>0$ and $\lambda \in \C$. Recently, An--Kim \cite{AK21} proved the local existence of solutions in $H^{s}(\R^{n} )$ with $0\le s<\min \{ n,\; 1+n/2\}$. However even though the solution is constructed by a fixed point technique, continuous dependence in the standard sense in $H^{s}(\R^{n} )$ with $0< s<\min \{ n,\; 1+n/2\}$ doesn't follow from the contraction mapping argument. In this paper, we show that the solution depends continuously on the initial data in the standard sense in $H^{s}(\R^{n} )$, i.e. in the sense that the local solution flow is continuous $H^{s}(\R^{n} )\to H^{s}(\R^{n} )$, if $\sigma $ satisfies certain assumptions.
\end{abstract}
\noindent {\bf Keywords}: Inhomogeneous nonlinear Schr\"{o}dinger equation; Continuous dependence; Cauchy problem; Subcritical

\noindent {\bf 2020 MSC}: 35Q55, 35B30, 46E35
\section{Introduction}

In this paper, we study the continuous dependence of the Cauchy problem for the inhomogeneous nonlinear Schr\"{o}dinger (INLS) equation
\begin{equation} \label{GrindEQ__1_1_}
\left\{\begin{array}{l} {iu_{t} +\Delta u=|x|^{-b} f(u)} \\ {u(0,\; x)=\phi (x),} \end{array}\right.
\end{equation}
where $\phi \in H^{s} \left(\R^{n} \right)$, $n\in \N$, $0<s<\min \left\{n,\;\frac{n}{2} +{1}\right\}$ and $0<b<2$ and $f$ is of class $C\left(\sigma ,s,b\right)$ (see Definition 1.1).

\begin{defn}\label{defn 1.1}
\textnormal{Let $f:\C\to \C$, $s>0$, $\sigma>0$, $0\le b<2$ and $\left\lceil s\right\rceil $ denote the minimal integer which is larger than or equals to $s$. For $k\in \N$, let $k$-th order derivative of $f(z)$ be defined under the identification $\C=\R^{2} $ (see Section 2). Let us define
\begin{equation} \label{GrindEQ__1_2_}
\sigma _{s} =\left\{\begin{array}{l} {\frac{4-2b}{n-2s} ,\;0<s<\frac{n}{2} ,} \\ {\infty ,\; s\ge \frac{n}{2} .} \end{array}\right.
\end{equation}
We say that $f$ is of class $C\left(\sigma ,s,b\right)$ if it satisfies one of the following conditions:}
\renewcommand{\theenumi}{\roman{enumi}}
\begin{enumerate}
\item  \textnormal{$f\left(z\right)$ is a polynomial in $z$ and $\bar{z}$ satisfying $1<\deg \left(f\right)=1+\sigma<1+\sigma_{s}$.}

\item  \textnormal{$f\in C^{\left\lceil s\right\rceil } \left(\C\to \C\right)$, $\left\lceil s\right\rceil -1<\sigma <\sigma_{s}$ and
\begin{equation} \label{GrindEQ__1_3_}
\left|f^{\left(k\right)} \left(u\right)\right|\lesssim\left|u\right|^{\sigma +1-k} ,
\end{equation}
for any $0\le k\le \left\lceil s\right\rceil $ and $u\in \C$. Furthermore,
\begin{equation} \label{GrindEQ__1_4_}
\left|f^{\left(\left\lceil s\right\rceil\right)} \left(u\right)-f^{\left(\left\lceil s\right\rceil\right)} \left(v\right)\right|\lesssim\left|u-v\right|^{\min \{ \sigma -\left\lceil s\right\rceil +1,\;1\} } \left(\left|u\right|+\left|v\right|\right)^{\max \{ 0,\;\sigma -\left\lceil s\right\rceil \} } ,
\end{equation}
for any $u,\;v\in \C$. If $s<1$, assume further that $f\in C^{2} \left(\C\to \C\right)$, $\sigma \ge 1$ and
\begin{equation} \label{GrindEQ__1_5_}
\left|f''\left(u\right)\right|\lesssim\left|u\right|^{\sigma -1},
\end{equation}
for any $u\in \C$.}
\end{enumerate}
\end{defn}

\begin{rem}\label{rem1.2}
\textnormal{Let $s>0$, $0\le b<2$ and $0<\sigma <\sigma _{s} $. If $\sigma $ is not an even integer, assume that $\left\lceil s\right\rceil -1< \sigma $. If $s<1$, in addition, suppose further that $\sigma \ge 1$. Then one can verify that $f(u)=\lambda \left|u\right|^{\sigma } u$ with $\lambda \in \C$ is a model case of class $C\left(\sigma ,s,b\right)$. See \cite{CFH11, DYC13} for example.}
\end{rem}

The equation (1.1) is equivalent to
\begin{equation} \label{GrindEQ__1_6_}
\;u(t)=S(t)u_{0} -i\lambda \int _{0}^{t}S(t-\tau )|x|^{-b} f\left(u\left(\tau \right)\right)d\tau  ,
\end{equation}
where $S(t)=e^{it\Delta } $ is the Schr\"{o}dinger semi-group.

When $0\le s<\frac{n}{2} $, $\sigma _{s} $ given in (1.2) is said to be a critical power in $H^{s} (\R^{n} )$. And $\sigma \left(<\sigma _{s} \right)$ is known as a subcritical power in $H^{s} (\R^{n} )$ for $s\ge 0$.

\begin{defn}\label{defn 1.3}
\textnormal{We say that a pair $(\gamma (p),\;p)$ is admissible if
\begin{equation} \label{GrindEQ__1_7_}
\left\{\begin{array}{l} {2\le p\le \frac{2n}{n-2} ,\;n\ge 3,} \\ {2\le p<\infty , \;n=2,} \\ {2\le p\le \infty , \;n=1,} \end{array}\right.
\end{equation}
and
\begin{equation} \label{GrindEQ__1_8_}
\frac{2}{\gamma (p)} =\frac{n}{2} -\frac{n}{p} .
\end{equation}
We also define the following Strichartz spaces:
\[S\left(I,\;H^{s} \right)=\left\{u:\, \left\| u\right\| _{S\left(I,\;H^{s} \right)} <\infty \right\}, ~S\left(I,\;\dot{H}^{s} \right)=\left\{u:\, \left\| u\right\| _{S\left(I,\;\dot{H}^{s} \right)} <\infty \right\}\]
and dual Strichartz spaces:
\[S'\left(I,\;H^{s} \right)=\left\{u:\, \left\| u\right\| _{S'\left(I,\;H^{s} \right)} <\infty \right\}, ~S'\left(I,\;\dot{H}^{s} \right)=\left\{u:\, \left\| u\right\| _{S'\left(I,\;\dot{H}^{s} \right)} <\infty \right\},\]
whose norms are defined by
\[\left\| u\right\| _{S\left(I,\;\dot{H}^{s} \right)} ={\mathop{\sup }\limits_{\left(\gamma \left(r\right),\;r\right)\in A}} \left\| u\right\| _{L^{\gamma \left(r\right)} \left(I,\;\dot{H}_{r}^{s} \right)},~\left\| u\right\| _{S\left(I,\;H^{s} \right)} ={\mathop{\sup }\limits_{\left(\gamma \left(r\right),\;r\right)\in A}} \left\| u\right\| _{L^{\gamma \left(r\right)} \left(I,\;H_{r}^{s} \right)} ,\]
\[\left\| u\right\| _{S'\left(I,\;\dot{H}^{s} \right)} ={\mathop{\inf }\limits_{\left(\gamma \left(r\right),\;r\right)\in A}} \left\| u\right\| _{L^{\gamma \left(r\right)^{{'} } } \left(I,\;\dot{H}_{r'}^{s} \right)},~\left\| u\right\| _{S'\left(I,\;H^{s} \right)} ={\mathop{\inf }\limits_{\left(\gamma \left(r\right),\;r\right)\in A}} \left\| u\right\| _{L^{\gamma \left(r\right)^{{'} } } \left(I,\;H_{r'}^{s} \right)} ,\]
where $s\in \R$, $A=\left\{\left(\gamma \left(r\right),\;r\right):\;\left(\gamma \left(r\right),\;r\right)\textrm{is admissible}\right\}$ and $I\subset \R$ is an interval. We also use the notation $S\left(I,\;L^{{2}} \right)$ and $S'\left(I,\;L^{{2}} \right)$ instead of $S\left(I,\;\dot{H}^{0} \right)$ and $S'\left(I,\;\dot{H}^{0} \right)$, respectively.}
\end{defn}
If $b=0$, the equation (1.1) is the well-known classic nonlinear Schr\"{o}dinger equation (NLS) which has been widely studied over the last three decades. See, for example, \cite{C03, CFH11, DYC13, LP15, BHHG11} and the references therein.

In this paper, we consider the case $0<b<2$. The INLS equation (1.1) arises in nonlinear optics and it has also been widely studied by several authors during the last two decades. We refer the reader to \cite{BKMT11, BKVLMT12, G00, LT94} for the physical background of the INLS equation (1.1). We also refer the reader to [1, 4, 5, 8, 10--16, 18] for recent work on the INLS equation (1.1). In particular, the author in \cite{G17} studied the local and global well-posedness in $H^{s} (\R^{n} )$ with $0\le s\le \min \left\{\frac{n}{2} ,\;1\right\}$ for the INLS equation (1.1) with $f(u)=\lambda \left|u\right|^{\sigma } u$ by using the contraction mapping principle based on Strichartz estimates. Recently, the authors in \cite{AK21} improved the local well-posedness result of \cite{G17} by extending the validity of $s$ and $b$. More precisely, they obtained the following existence result (see Theorem 1.4 and Theorem 1.6 in \cite{AK21}).

\begin{thm}[\cite{AK21}]\label{thm 1.4.}
Let $n\in \N$, $0\le s<\min \left\{n,\;\frac{n}{2} +{1}\right\}$, $0<b<\hat{2}$ and $0<\sigma <\sigma _{s}$, where $\sigma _{s}$ is given in $(1.2)$ and
\begin{equation} \label{GrindEQ__1_9_}
\hat{2}=\left\{\begin{array}{l} {\min \left\{2,\; 1+\frac{n-2s}{2} \right\},\;n\ge 3,} \\ {n-s,\;n=1,\;2.} \end{array}\right.
\end{equation}
Assume that $f$ is of class ${\rm X} \left(\sigma,s,b\right)$ (see Definition 1.1 of \cite{AK21}). If $\phi \in H^{s} \left(\R^{n} \right)$, then there exists $T=T\left(\left\| \phi \right\| _{H^{s} } \right)>0$ such that $(1.1)$ has a unique solution satisfying
\begin{equation} \label{GrindEQ__1_10_}
u\in C\left(\left[-T,\;T\right],H^{s} \right)\bigcap L^{\gamma \left(p\right)} \left(\left[-T,\;T\right],\;H_{p}^{s} \left(\R^{n} \right)\right),
\end{equation}
for any admissible pair $(\gamma (p),\;p)$.
\end{thm}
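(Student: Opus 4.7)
The plan is to solve \eqref{GrindEQ__1_1_} via its Duhamel formulation
\[
\Phi(u)(t):=S(t)\phi-i\lambda\int_{0}^{t}S(t-\tau)|x|^{-b}f(u(\tau))\,d\tau
\]
by a contraction mapping argument in a Strichartz-type space. First I would fix a time $T>0$ and a constant $M>0$ (both to be chosen) and work on the complete metric space
\[
X_{T,M}=\Bigl\{u\in C([-T,T],H^{s})\cap\bigcap_{(\gamma(p),p)\in A}L^{\gamma(p)}([-T,T],H^{s}_{p}):\;\|u\|_{S([-T,T],H^{s})}\le M\Bigr\}
\]
equipped with the weaker distance $d(u,v)=\|u-v\|_{S([-T,T],L^{2})}$; the weaker metric is essential because a fractional chain rule for the difference $f(u)-f(v)$ is only available with loss of regularity. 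By the standard Strichartz estimates for the free Schr\"odinger group, one has
\[
\|\Phi(u)\|_{S([-T,T],H^{s})}\lesssim\|\phi\|_{H^{s}}+\bigl\||x|^{-b}f(u)\bigr\|_{S'([-T,T],H^{s})},
\]
and similarly for $d(\Phi(u),\Phi(v))$.

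The heart of the argument is to prove the nonlinear estimates
\[
\bigl\||x|^{-b}f(u)\bigr\|_{S'([-T,T],H^{s})}\lesssim T^{\theta}\|u\|_{S([-T,T],H^{s})}^{\sigma+1},
\]
\[
\bigl\||x|^{-b}(f(u)-f(v))\bigr\|_{S'([-T,T],L^{2})}\lesssim T^{\theta}\bigl(\|u\|_{S([-T,T],H^{s})}+\|v\|_{S([-T,T],H^{s})}\bigr)^{\sigma}d(u,v)
\]
for some $\theta>0$. To do this I would split $\R^{n}$ into $|x|\le1$ and $|x|>1$ (or use a smooth dyadic partition) so that on the outer region the weight $|x|^{-b}$ is bounded, while on the inner region it is controlled by a Hardy-type inequality $\||x|^{-b}w\|_{L^{q}}\lesssim\|w\|_{\dot{H}^{b}_{q}}$ or by a weighted H\"older inequality using $|x|^{-b}\in L^{n/b,\infty}(\{|x|\le1\})$. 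The nonlinearity $f$ is handled by the pointwise bounds \eqref{GrindEQ__1_3_}--\eqref{GrindEQ__1_5_} together with the fractional chain rule (and the non-smooth chain rule of Visan--Christ--Weinstein when $s$ is non-integer). The exponents of the admissible pairs in the suprema defining $S(\cdot,H^{s})$ and in the infima defining $S'(\cdot,H^{s})$ must be chosen compatibly so that Sobolev embedding $H^{s}_{p}\hookrightarrow L^{r}$ produces $L^{r}$-norms at the Lebesgue exponent needed to close the H\"older balance; the conditions $b<\hat{2}$ and $s<\min\{n,n/2+1\}$ ensure that such admissible exponents exist, and the strict subcriticality $\sigma<\sigma_{s}$ produces the positive power $T^{\theta}$.

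Once these nonlinear estimates are in place, taking $M=2C\|\phi\|_{H^{s}}$ and $T$ sufficiently small makes $\Phi$ map $X_{T,M}$ into itself and be a $\tfrac12$-contraction for $d$. Banach's fixed point theorem then yields the unique solution $u\in X_{T,M}$; the inclusion $u\in C([-T,T],H^{s})$ follows from the strong continuity of $S(\cdot)$ on $H^{s}$ and a dominated-convergence argument on the Duhamel term, while the uniqueness statement for the full class in \eqref{GrindEQ__1_10_} is obtained by a standard Strichartz-type argument showing that any two such solutions agree on a small interval and then bootstrapping. The main obstacle throughout is the singular weight $|x|^{-b}$: it forbids a direct application of the fractional Leibniz rule to $|x|^{-b}f(u)$ since $|x|^{-b}\notin H^{s}$, and it also restricts the admissible pairs that can be used on the region near the origin. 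Threading the needle between Hardy's inequality near $0$, a plain Sobolev--H\"older estimate for $|x|\ge1$, and the subcriticality of $\sigma$ to produce a usable $T^{\theta}$ is the only delicate point; the rest of the argument follows the classical Cazenave--Weissler template for NLS.
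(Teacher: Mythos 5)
Your proposal is correct and follows essentially the same route as the cited proof in \cite{AK21} (which this paper only quotes): a contraction on a ball of $S([-T,T],H^{s})$ equipped with the weaker $S([-T,T],L^{2})$ metric, Strichartz estimates, and nonlinear estimates obtained by splitting into $\{|x|\le 1\}$ and $\{|x|>1\}$ and absorbing the weight via H\"older/Sobolev (cf.\ Remark 3.5 and the $C_{1}$, $C_{2}$ decomposition in Section 3), with subcriticality $\sigma<\sigma_{s}$ and $b<\hat{2}$ yielding the factor $T^{\theta}$. The paper itself emphasizes precisely the feature you identify --- that the fixed-point metric only involves Lebesgue norms --- which is why the standard $H^{s}$ continuous dependence does not follow automatically and is the subject of Theorem 1.8.
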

The existence of solutions in Theorem 1.4 is established by using contraction mapping argument based on Strichartz estimates. Repeating this argument, we can extend the above solution to the maximal solution of (1.1), that is, we have the following existence result.
\begin{thm}\label{thm 1.5.}
Let $n\in \N$, $0\le s<\min \left\{n,\;\frac{n}{2} +{1}\right\}$, $0<b<\hat{2}$ and $0<\sigma <\sigma _{s} $. Assume that $f$ is of class ${\rm X} \left(\sigma,s,b\right)$. Then for any $\phi \in H^{s} \left(\R^{n} \right)$, there exist $T_{\max } =T_{\max } \left(\left\| \phi \right\| _{H^{s} } \right)>0$, $T_{\min } =T_{\min } \left(\left\| \phi \right\| _{H^{s} } \right)>0$ and a unique, maximal solution of $(1.1)$ satisfying
\begin{equation} \label{GrindEQ__1_11_}
C\left(\left(-T_{\min } ,\;T_{\max } \right),H^{s} \right)\bigcap L_{loc}^{\gamma (p)} \left(\left(-T_{\min } ,\;T_{\max } \right),\;H_{p}^{s} \left(\R^{n} \right)\right)
\end{equation}
for any admissible pair $(\gamma (p),\;p)$.
\end{thm}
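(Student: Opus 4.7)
The plan is to extend the local solution from Theorem 1.4 to a maximal interval by iterating that theorem and gluing overlapping solutions using the uniqueness part. First I would apply Theorem 1.4 with initial datum $\phi \in H^{s}(\R^{n})$ to obtain an initial existence time $T_{0} = T(\|\phi\|_{H^{s}}) > 0$ and a solution $u$ on $[-T_{0}, T_{0}]$ in the class (1.10). This already delivers the quantitative lower bounds $T_{\max}, T_{\min} \geq T_{0}(\|\phi\|_{H^{s}})$ that are recorded in the statement. I would then define $T_{\max}$ to be the supremum of all $T' > 0$ for which (1.1) admits a solution on $[0, T')$ lying in $C([0,T'), H^{s}) \cap L^{\gamma(p)}_{\mathrm{loc}}([0,T'), H^{s}_{p})$ for every admissible pair $(\gamma(p), p)$, and $T_{\min}$ symmetrically for negative times. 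Theorem 1.4 ensures both of these sets are nonempty, so $T_{\max}, T_{\min} \in (0, \infty]$.

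Next I would construct the maximal solution by gluing. Given two solutions $u_{1}, u_{2}$ of (1.1) with the common initial datum $\phi$, defined on intervals $[0, T_{1}'), [0, T_{2}')$ and belonging to the class above, consider the set $E = \{ t \in [0, \min(T_{1}', T_{2}')) : u_{1} = u_{2} \text{ on } [0,t]\}$. It contains $0$ and is closed by $H^{s}$-continuity. To see that $E$ is open, fix $t_{\star} \in E$: both $u_{j}(\cdot + t_{\star})$ solve (1.1) with the same initial datum $u_{1}(t_{\star}) = u_{2}(t_{\star}) \in H^{s}$, so the uniqueness assertion of Theorem 1.4 at time $t_{\star}$ (applied in the class (1.10)) forces $u_{1} = u_{2}$ on a neighborhood of $t_{\star}$. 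By connectedness $E = [0, \min(T_{1}', T_{2}'))$. Hence the union of all such candidate solutions defines a unique $u : [0, T_{\max}) \to H^{s}$, continuous in $H^{s}$; symmetrically one obtains $u$ on $(-T_{\min}, 0]$, and the two pieces agree at $0$.

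For the Strichartz-type regularity (1.11), a finite covering argument suffices: any compact $K \subset (-T_{\min}, T_{\max})$ is covered by finitely many intervals $I_{j} = [t_{j} - T(\|u(t_{j})\|_{H^{s}}), t_{j} + T(\|u(t_{j})\|_{H^{s}})]$ obtained by applying Theorem 1.4 at each $t_{j} \in K$, and on each $I_{j}$ the locally constructed solution (which coincides with $u$ by the gluing argument) lies in $L^{\gamma(p)}(I_{j}, H^{s}_{p})$; summing the finitely many norms yields $\|u\|_{L^{\gamma(p)}(K, H^{s}_{p})} < \infty$, i.e.\ $u \in L^{\gamma(p)}_{\mathrm{loc}}((-T_{\min}, T_{\max}), H^{s}_{p})$ for every admissible pair. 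The only real obstacle is the gluing step, since the uniqueness furnished by Theorem 1.4 is asserted only within the Strichartz class (1.10); the connectedness argument above is the standard workaround that upgrades local uniqueness to coincidence throughout the common interval. All remaining steps — monotonicity of the iterative extension, well-definedness and maximality of $(T_{\min}, T_{\max})$, continuity at interior times — are routine.
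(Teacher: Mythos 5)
Your proposal is correct and matches the paper's (only sketched) argument: the authors simply state that Theorem 1.5 follows by "repeating" the contraction-mapping construction of Theorem 1.4, and your write-up supplies exactly the standard details that sketch presupposes — iteration of the local theorem, coincidence of overlapping solutions via a connectedness/local-uniqueness argument, and a finite covering to get the $L^{\gamma(p)}_{\mathrm{loc}}$ bounds. No gaps.
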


\begin{rem}\label{rem 1.6.}
\textnormal{Obviously, if $f$ is of class $C\left(\sigma ,s,b\right)$, then $f$ is of class ${\rm X} \left(\sigma,s,b\right)$}
\end{rem}

As in the study of the classic nonlinear Schr\"{o}dinger (NLS) equation, it is natural to ask whether the above solution depends continuously on the initial data. See \cite{C03, CFH11, DYC13} for example. Since Theorem 1.4 and Theorem 1.5 are proved by using the fixed point argument, one can conjecture that the dependence of solution on the initial data is locally Lipschitz. However, the metric space in which one applies Banach's fixed point theorem involves Sobolev norms of order $s$, while the distance only involves Lebesgue norms. Thus the flow is locally Lipschitz for Lebesgue norms, and the continuous dependence in the following sense follows by interpolation inequalities.
\begin{thm}\label{thm 1.7.}
Let $n\in \N$, $0<s<\min \left\{n,\;1+\frac{n}{2} \right\}$, $0<b<\hat{2}$ and $0<\sigma <\sigma _{s} $. Assume that $f$ is of class ${\rm X} \left(\sigma,s,b\right)$. Then for any given $\phi \in H^{s} \left(\R^{n} \right)$, the corresponding solution $u$ of the INLS equation $(1.1)$ in Theorem 1.5 depends continuously on the initial data $\phi $ in the following sense. There exists $0<T<T_{\max } ,\, T_{\min } $ such that if $\phi _{m} \to \phi $ in $H^{s} \left(\R^{n} \right)$ and if $u_{m} $ denotes the solution of $(1.1)$ with the initial data $\phi _{m} $, then $0<T<T_{\max } \left(\phi _{m} \right),\, T_{\min } \left(\phi _{m} \right)$ for all sufficiently large $m$ and $u_{m} $ is bounded in $S\left(\left[-T,\;T\right],\;H^{s} \right)$. Moreover, $u_{m} \to u$ in $L^{\gamma (p)} \left(\left[-T,\;T\right],\;H_{p}^{s-\varepsilon } \left(\R^{n} \right)\right)$ as $m\to \infty $ for all $\varepsilon >0$ and all admissible pair $\left(\gamma \left(p\right),\, p\right)$. In particular, $u_{m} \to u$ in $C\left(\left[-T,\;T\right],\; H^{s-\varepsilon } \right)$ for all $\varepsilon >0$.
\end{thm}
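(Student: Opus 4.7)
The plan is to follow the standard Cazenave-type scheme: first establish uniform boundedness of the sequence $(u_m)$ in the strong $H^s$-valued Strichartz space, then show that the flow is locally Lipschitz at the $L^{2}$ level, and finally recover $H^{s-\varepsilon}$ convergence by interpolation between these two facts.

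First I would fix $T<\min\{T_{\max}(\phi),T_{\min}(\phi)\}$ so small that the contraction argument of \cite{AK21} used to prove Theorem 1.4 closes on a ball $B_{2M}$ of $S([-T,T],H^{s})$, where $M$ is an a priori bound depending only on $\|\phi\|_{H^{s}}$. Since $\phi_{m}\to\phi$ in $H^{s}$, for all $m$ large enough the same contraction argument applies with initial data $\phi_{m}$ on the same interval $[-T,T]$, so the solution $u_{m}$ of Theorem 1.5 extends to $[-T,T]$ and satisfies $\|u_{m}\|_{S([-T,T],H^{s})}\le 2M$. In particular $T<T_{\max}(\phi_{m}),T_{\min}(\phi_{m})$ for all such $m$, establishing the uniform boundedness statement.

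Next I would prove that $u_{m}\to u$ in $S([-T,T],L^{2})$. Writing the Duhamel formula (1.6) for $u$ and $u_{m}$, subtracting, and applying Strichartz estimates together with the weighted nonlinear estimates of \cite{AK21} at the $L^{2}$ (rather than $H^{s}$) level, one obtains an inequality of the form
\begin{equation*}
\|u_{m}-u\|_{S([-T,T],L^{2})}\;\le\;C\|\phi_{m}-\phi\|_{L^{2}}+C(M)\,T^{\theta}\,\|u_{m}-u\|_{S([-T,T],L^{2})},
\end{equation*}
where $\theta>0$ and $C(M)$ depends only on the $H^{s}$-bound $M$ through Sobolev embedding applied to $|u|^{\sigma}+|u_{m}|^{\sigma}$. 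Taking $T$ smaller if necessary so that $C(M)T^{\theta}\le 1/2$, one absorbs the last term and concludes $\|u_{m}-u\|_{S([-T,T],L^{2})}\to 0$, since $\phi_{m}\to\phi$ also in $L^{2}$. This is the step where the Lipschitz property of the flow in Lebesgue norms—already implicit in the fixed-point construction—is made quantitative; the main obstacle will be controlling $\||x|^{-b}(f(u_{m})-f(u))\|_{S'([-T,T],L^{2})}$ using the pointwise bound $|f(u_{m})-f(u)|\lesssim(|u_{m}|^{\sigma}+|u|^{\sigma})|u_{m}-u|$, distributing the singular weight $|x|^{-b}$ suitably between the two factors, and choosing the admissible pair so that the subcriticality $\sigma<\sigma_{s}$ gives the positive power of $T$.

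Finally I would close the argument by interpolation. For each admissible pair $(\gamma(p),p)$ and each $\varepsilon\in(0,s)$, the standard Sobolev interpolation gives, pointwise in $t$,
\begin{equation*}
\|u_{m}(t)-u(t)\|_{H^{s-\varepsilon}_{p}}\;\lesssim\;\|u_{m}(t)-u(t)\|_{L^{p}}^{\varepsilon/s}\,\|u_{m}(t)-u(t)\|_{H^{s}_{p}}^{1-\varepsilon/s},
\end{equation*}
and H\"older's inequality in time then yields
\begin{equation*}
\|u_{m}-u\|_{L^{\gamma(p)}([-T,T],H^{s-\varepsilon}_{p})}\;\lesssim\;\|u_{m}-u\|_{S([-T,T],L^{2})}^{\varepsilon/s}\,\|u_{m}-u\|_{S([-T,T],H^{s})}^{1-\varepsilon/s}.
\end{equation*}
The second factor is bounded by $(4M)^{1-\varepsilon/s}$ by Step~1, while the first tends to zero by Step~2, giving the desired convergence in $L^{\gamma(p)}([-T,T],H^{s-\varepsilon}_{p})$ for every admissible pair and every $\varepsilon>0$; specializing to $(\gamma(p),p)=(\infty,2)$ yields the $C([-T,T],H^{s-\varepsilon})$ convergence. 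The only nontrivial obstacle in the whole scheme is the $L^{2}$-level nonlinear estimate in Step~2 with the singular weight $|x|^{-b}$, since $f$ is not globally Lipschitz and one must trade the roughness of $f$ for the $H^{s}$ regularity supplied by Step~1 through Sobolev embedding.
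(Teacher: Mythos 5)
Your proposal is correct and follows essentially the same route the paper takes: the paper's proof of Theorem 1.7 is a one-line reference to combining the contraction-mapping estimates of \cite{AK21} with the argument of Theorem 4.9.1 in \cite{C03}, which is precisely your three-step scheme (uniform $S([-T,T],H^{s})$ bound from the fixed-point construction, Lipschitz dependence in the $L^{2}$-level Strichartz distance, and interpolation to obtain $H^{s-\varepsilon}_{p}$ convergence). No genuine gap; your write-up simply supplies the details the authors omit.
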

\begin{proof}
We can easily prove Theorem 1.7 by combining the argument used in \cite{AK21} with that used in the proof of Theorem 4.9.1 in \cite{C03} and we omit the details.
\end{proof}
But the above result is weaker than what would be ``standard", i.e. $\varepsilon =0$ (see e.g. \cite{C03}). In the study of the classic NLS equation (which corresponds to (1.1) with $b=0$), the continuous dependence in the standard sense in $H^{s}$ was investigated by \cite{CFH11, DYC13}, where the Besov space theory was mainly applied. As in the study of the classic NLS equation, we wonder if the solution of the INLS equation (1.1) depends continuously on the initial data in the standard sense in $H^{s}$, i.e. in the sense that the local solution flow is continuous $H^{s} \to H^{s} $. However, up to the knowledge of the authors, the continuous dependence of (1.1) in the standard sense is not yet clear. On the other hand, as we can see in the previous work on (1.1), it seems difficult to apply the Besov space theory in the study of INLS equation (1.1), due to the term $|x|^{-b}$.

Taking these consideration into our account, in this paper, we will investigate the continuous dependence of the Cauchy problem for the INLS equation (1.1) in the standard sense in $H^{s}$. To arrive at this goal, we first establish the various estimates of the term $f(u)-f(v)$ in the Sobolev spaces of order $s$ (see Lemma 3.1 --Lemma 3.4, Lemma 3.8--Lemma 3.11). And then by using them, we establish the estimates of the term $|x|^{-b}(f(u)-f(v))$ (see Lemma 3.6, Lemma 3.7, Lemma 3.12--Lemma 3.15). These estimates play a crucial role in proving the continuous dependence of the Cauchy problem for the INLS equation (1.1). The main result of this paper is the following.

\begin{thm}\label{thm 1.8.}
Let $n\in \N$, $0<s<\min \left\{n,\;1+\frac{n}{2} \right\}$, $0<b<\hat{2}$ and $0<\sigma <\sigma _{s} $. Assume that $f$ is of class $C\left(\sigma ,s,b\right)$. Then for any given $\phi \in H^{s} \left(\R^{n} \right)$, the corresponding solution $u$ of the INLS equation $(1.1)$ in Theorem 1.5 depends continuously on the initial data $\phi $ in the following sense. For any interval $\left[-S,\, T\right]\subset \left(-T_{\min } \left(\phi \right),\;T_{\max } \left(\phi \right)\right)$, and every admissible pair $\left(\gamma \left(p\right),\;p\right)$, if $\phi _{m} \to \phi $ in $H^{s} \left(\R^{n} \right)$ and if $u_{m} $ denotes the solution of $(1.1)$ with the initial data $\phi _{m} $, then $u_{m} \to u$ in $L^{\gamma (p)} (\left[-S,\;T\right],\;H_{p}^{s} (\R^{n} ))$ as $m\to \infty $. In particular, $u_{m} \to u$ in $C\left(\left[-S,\;T\right],\;H^{s} \right)$.
In addition, if $f(u)$ is a polynomial in $u$ and $\bar{u}$, or if $f(u)$ is not a polynomial and $\sigma \ge \left\lceil s\right\rceil $, then the dependence is locally Lipschitz.
\end{thm}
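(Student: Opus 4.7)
The plan is to follow the Duhamel/Strichartz continuity scheme used for the classical NLS in \cite{CFH11, DYC13}, but executed entirely within the Sobolev scale (as opposed to Besov spaces, which are hard to combine with the singular weight $|x|^{-b}$), relying on the various estimates for $f(u)-f(v)$ and $|x|^{-b}(f(u)-f(v))$ promised in Lemmas 3.1--3.15. The first step is a reduction to a local statement. Since $[-S,T]$ is compactly contained in $(-T_{\min }(\phi),T_{\max }(\phi))$ and $u\in S([-S,T],H^{s})$ by Theorem 1.5, I would partition $[-S,T]$ into finitely many subintervals $I_{j}=[t_{j-1},t_{j}]$ so short that the Strichartz norms of $u$ on each $I_{j}$ are small enough for the nonlinear estimates of Section 3 to absorb perturbations. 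It then suffices to prove the inductive claim: if $u_{m}(t_{j-1})\to u(t_{j-1})$ in $H^{s}$, then $u_{m}\to u$ in $S(I_{j},H^{s})$. On the initial subinterval this is just the hypothesis $\phi_{m}\to\phi$, and it propagates across $[-S,T]$ in finitely many steps.

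On a fixed short interval $I$, applying Strichartz estimates to the Duhamel difference
\[u_{m}(t)-u(t)=S(t-t_{j-1})\bigl(u_{m}(t_{j-1})-u(t_{j-1})\bigr)-i\lambda\int_{t_{j-1}}^{t}S(t-\tau)|x|^{-b}\bigl(f(u_{m})-f(u)\bigr)d\tau\]
reduces $H^{s}$-convergence to control of $\||x|^{-b}(f(u_{m})-f(u))\|_{S'(I,H^{s})}$. The $L^{2}$-level estimate, which uses only the size bound $|f(u)|\lesssim|u|^{\sigma+1}$, is linear in $u_{m}-u$ and on a short enough $I$ yields the Lipschitz bound $\|u_{m}-u\|_{S(I,L^{2})}\lesssim\|u_{m}(t_{j-1})-u(t_{j-1})\|_{L^{2}}$ by the same contraction argument that established Theorem 1.4 in \cite{AK21}. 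When $f$ is a polynomial in $u,\bar{u}$, or more generally when $\sigma\ge\lceil s\rceil$, the fractional chain/product estimates in Lemmas 3.12--3.15 are genuinely Lipschitz at the $\dot{H}^{s}$ level as well, so the same contraction-style argument upgrades this to $\|u_{m}-u\|_{S(I,H^{s})}\lesssim\|u_{m}(t_{j-1})-u(t_{j-1})\|_{H^{s}}$, which is exactly the locally Lipschitz conclusion claimed in the last sentence of Theorem 1.8.

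The main obstacle is the genuinely nonlinear case $\lceil s\rceil-1<\sigma<\lceil s\rceil$, in which $f^{(\lceil s\rceil)}$ is only H\"older continuous of exponent $\sigma-\lceil s\rceil+1<1$, so no Lipschitz estimate of $f(u_{m})-f(u)$ in $\dot{H}^{s}$ is available. Here I would combine three ingredients: (i) uniform boundedness $\sup_{m}\|u_{m}\|_{S(I,H^{s})}<\infty$, inherited from Theorem 1.4 applied to the bounded sequence $\phi_{m}$; (ii) the Lipschitz convergence $\|u_{m}-u\|_{S(I,L^{2})}\to 0$ already obtained; and (iii) the H\"older-type bound (1.4) together with Lemmas 3.8--3.11. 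By interpolation, (i) and (ii) give $u_{m}\to u$ in $S(I,H^{s-\varepsilon})$ for every $\varepsilon>0$ (this is precisely Theorem 1.7). The H\"older factor $|u_{m}-u|^{\sigma-\lceil s\rceil+1}$ appearing in the Sobolev estimate of $f(u_{m})-f(u)$ can then be paid for by choosing $\varepsilon$ small enough that $H^{s-\varepsilon}$-convergence of $u_{m}-u$, together with uniform $H^{s}$-boundedness, is enough to make $\||x|^{-b}(f(u_{m})-f(u))\|_{S'(I,H^{s})}\to 0$.

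Iterating over $I_{1},\ldots,I_{N}$ and invoking the continuous embedding $S(I_{j},H^{s})\hookrightarrow C(I_{j},H^{s})$ gives $u_{m}\to u$ in $L^{\gamma(p)}([-S,T],H_{p}^{s})$ for every admissible pair and in $C([-S,T],H^{s})$, completing the proof. The truly technical point, and the reason Section 3 is needed, is step (iii): producing the right H\"older-type decomposition of $|x|^{-b}(f(u_{m})-f(u))$ in $S'(I,H^{s})$ purely by Sobolev (as opposed to Besov) methods, so that the weight $|x|^{-b}$ and the fractional derivative of order $s$ can be distributed compatibly with the admissible exponents.
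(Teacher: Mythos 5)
Your proposal is correct and follows essentially the same route as the paper: Duhamel plus Strichartz on a short interval, Lipschitz absorption at the $L^{2}$ (and, when $f$ is polynomial or $\sigma\ge\lceil s\rceil$, at the $H^{s}$) level, and in the H\"older regime $\lceil s\rceil-1<\sigma<\lceil s\rceil$ the use of the $\varepsilon$-loss convergence of Theorem 1.7 together with Sobolev embeddings to send the extra term $\left\| u_{m}-u\right\|^{\sigma+1-\lceil s\rceil}$ (measured in a Lebesgue norm) to zero, followed by iteration over a finite partition of $[-S,T]$. This matches the paper's proof of (3.2) and its two-case structure, including the key role of Lemmas 3.6--3.7 and 3.12--3.15 in distributing $|x|^{-b}$ between the unit ball and its complement.
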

By Remark 1.2, Theorem 1.8 applies in particular to the model case $f(u)=\lambda \left|u\right|^{\sigma } u$ with $\lambda \in \C$.

\begin{cor}\label{cor 1.9.}
Assume $n\in \N$, $0<s<\min \left\{n,\;1+\frac{n}{2} \right\}$, $0<b<\hat{2}$ and $0<\sigma <\sigma _{s} $. Let $f(u)=\lambda \left|u\right|^{\sigma } u$ with $\lambda \in \C$. If $\sigma $ is not an even integer, assume further $\left\lceil s\right\rceil -1< \sigma $. If $s<1$, in addition, suppose further that $\sigma \ge 1$. Then for any given $\phi \in H^{s} \left(\R^{n} \right)$, the corresponding solution $u$ of the INLS equation $(1.1)$ in Theorem 1.5 depends continuously on the initial data $\phi $ in the sense of Theorem 1.8.
\end{cor}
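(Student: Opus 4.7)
The plan is to combine the Strichartz inequalities with the Sobolev-type estimates for $f(u)-f(v)$ and $|x|^{-b}(f(u)-f(v))$ developed in Section~3 (Lemmas~3.1--3.15) to obtain local continuous dependence on a short subinterval, and then propagate the conclusion to the full interval $[-S,T]$ by iteration. Corollary~1.9 will then be immediate from Remark~1.2.

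First I would exploit Theorem~1.7 to set the stage: for $m$ large, $u_m$ exists on $[-S,T]$ and is bounded in $S([-S,T],H^s)$ by some $M'$ independent of $m$; moreover $u_m\to u$ in $L^{\gamma(p)}([-S,T],H_p^{s-\varepsilon})$ for every $\varepsilon>0$ and every admissible pair. Given any $t_0\in[-S,T]$ and $\delta>0$, set $I_\delta=[t_0-\delta,t_0+\delta]\cap[-S,T]$. Writing Duhamel's formula for $u_m-u$ starting from $t_0$ and applying Strichartz,
\[
\|u_m-u\|_{S(I_\delta,H^s)}
\le C\|u_m(t_0)-u(t_0)\|_{H^s}+C\bigl\||x|^{-b}(f(u_m)-f(u))\bigr\|_{S'(I_\delta,H^s)}.
\]
The task reduces to controlling the dual Strichartz norm of the nonlinearity.

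In the \emph{Lipschitz} case (polynomial $f$, or $\sigma\ge\lceil s\rceil$), Lemmas~3.12--3.15 deliver a clean bound of the form
\[
\bigl\||x|^{-b}(f(u_m)-f(u))\bigr\|_{S'(I_\delta,H^s)}\le C(M')\,\eta(\delta)\,\|u_m-u\|_{S(I_\delta,H^s)},
\]
with $\eta(\delta)\to 0$ as $\delta\to 0$. Choosing $\delta$ small (uniformly in $m$ and in $t_0$), the last term is absorbed, yielding the local Lipschitz dependence
$\|u_m-u\|_{S(I_\delta,H^s)}\le 2C\|u_m(t_0)-u(t_0)\|_{H^s}$. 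In the \emph{non-Lipschitz} case ($\sigma<\lceil s\rceil$ and $f$ not polynomial), the H\"{o}lder-type fractional-derivative estimates (Lemmas~3.6--3.11, applied with the singular weight via Lemmas~3.6, 3.7) instead yield a splitting
\[
\bigl\||x|^{-b}(f(u_m)-f(u))\bigr\|_{S'(I_\delta,H^s)}\le C(M')\,\eta(\delta)\,\|u_m-u\|_{S(I_\delta,H^s)}+C(M')\,R_m(\delta),
\]
where $R_m(\delta)$ involves only lower-regularity Strichartz norms of $u_m-u$ (e.g.\ $L^{\gamma(p)}(I_\delta,H_p^{s-\varepsilon})$ norms, with an appropriate H\"older exponent coming from \eqref{GrindEQ__1_4_}) together with the uniformly bounded $H^s$-norms of $u_m,u$. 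By Theorem~1.7, $R_m(\delta)\to 0$ as $m\to\infty$ for each fixed $\delta$. Absorbing the first term for small $\delta$ and sending $m\to\infty$ gives $\|u_m-u\|_{S(I_\delta,H^s)}\to 0$ provided $u_m(t_0)\to u(t_0)$ in $H^s$.

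Finally, I would propagate from $t=0$ outward. Since $u\in C([-S,T],H^s)$ and the $M'$-bound on $u_m$ is uniform in $t_0$, the threshold $\delta_0$ produced above can be chosen uniformly on $[-S,T]$. Starting from $\phi_m\to\phi$ in $H^s$, applying the local result on $[-\delta_0,\delta_0]$ gives convergence at $t=\pm\delta_0$ in $H^s$; repeating on $[\delta_0,2\delta_0]$, $[2\delta_0,3\delta_0]$, \ldots and analogously on the negative side, the conclusion $u_m\to u$ in $S([-S,T],H^s)$ is reached in finitely many steps, and the Lipschitz version is propagated in the same manner in the Lipschitz case. The main obstacle I anticipate is the non-Lipschitz case: organizing the Leibniz/fractional chain rule expansion of $D^s(f(u_m)-f(u))$ so that every term involving the top-order derivatives is either absorbable via $\eta(\delta)$ or expressible, using \eqref{GrindEQ__1_4_} and H\"older in space-time, as a product of a uniformly bounded high-regularity factor and a factor controlled by $\|u_m-u\|_{L^{\gamma(p)}(I,H_p^{s-\varepsilon})}$; this is exactly where the singular weight $|x|^{-b}$ forces us to rely on the dedicated Lemmas~3.6, 3.7 and~3.12--3.15 rather than on a Besov-space framework.
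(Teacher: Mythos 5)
Your proposal is correct and follows essentially the same route as the paper: the paper disposes of Corollary 1.9 in one line by noting (Remark 1.2) that $f(u)=\lambda|u|^{\sigma}u$ is of class $C(\sigma,s,b)$ under the stated hypotheses, so Theorem 1.8 applies directly, and your opening and closing remarks capture exactly this. Your accompanying sketch of how Theorem 1.8 itself is proved (Duhamel plus Strichartz on a short interval, absorption of the Lipschitz part for small time, control of the H\"older-type remainder through the $\varepsilon$-loss convergence of Theorem 1.7, then iteration over $[-S,T]$) also matches the paper's Section 3 argument, so there is nothing to correct.
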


\begin{rem}\label{rem 1.10.}
\textnormal{When $0<s<1$, Corollary 1.9 only covers the case $1\le \sigma <\frac{4-2b}{n-2s}$, while the existence result holds for $0<\sigma <\frac{4-2b}{n-2s}$. It is an interesting problem to show the continuous dependence in the standard sense when $0<s<1$ and $0<\sigma<1$.}
\end{rem}

This paper is organized as follows. In Section 2, we introduce some basic notation and give some preliminary results. In Section 3, we prove Theorem 1.8

\noindent
\section{Preliminaries}

We begin by introducing some basic notation. $F$ denotes the Fourier transform; $F^{-1} $ denotes the inverse Fourier transform. We denote by $p'$ the dual number of $p\in [1,\;\infty ]$, i.e. $1/p+1/p'=1$. For $s\in \R$, we denote by $\left[s\right]$ the largest integer which is less than or equals to $s$ and by $\left\lceil s\right\rceil$ the minimal integer which is larger than or equals to $s$. $C\left(>0\right)$ will denote a positive universal constant, which can be different at different places. $a\lesssim b$ means $a\le Cb$ for some constant $C>0$. For $B \subset \R^{n}$, $\chi_{B}$ denotes the characteristic function of $B$, i.e. $\chi_{B}(x)=1$ for $x\in B$, and $\chi_{B}(x)=0$ for $x \in B^{C}$.
For a multi-index $\alpha =\left(\alpha _{1} ,\;\alpha _{2} \;,\;\ldots ,\;\alpha _{n} \right)$, denote
\[D^{\alpha } =\partial _{x_{1} }^{\alpha _{1} } \cdots \partial _{x_{n} }^{\alpha _{n} },~ \left|\alpha \right|=\left|\alpha _{1} \right|+\cdots +\left|\alpha _{n} \right|.\]
For a function $f(z)$ of a complex variable $z$ and for a positive integer $k$, $k$-th order complex derivative of $f(z)$ is defined by
\[f^{\left(k\right)} (z)=\left(\frac{\partial ^{k} f}{\partial z^{k} } ,\;\frac{\partial ^{k} f}{\partial z^{k-1} \partial \bar{z}} ,\;\ldots ,~\frac{\partial ^{k} f}{\partial z\partial \bar{z}^{k-1} } ,~\frac{\partial ^{k} f}{\partial \bar{z}^{k} } \right),\]
where
\[\frac{\partial f}{\partial z} =\frac{1}{2} \left(\frac{\partial f}{\partial x} -i\frac{\partial f}{\partial y} \right),~ \frac{\partial f}{\partial \bar{z}} =\frac{1}{2} \left(\frac{\partial f}{\partial x} +i\frac{\partial f}{\partial y} \right).\]
We also define its norm as
\[\left|f^{\left(k\right)} (z)\right|=\sum _{i=0}^{k}\left|\frac{\partial ^{k} f}{\partial z^{k-i} \partial \bar{z}^{i} } \right| .\]

As usual, $L^{p} (\R^{n} )$ is the Lebesgue space,
\[\left\| f\right\| _{p} =\;\left\| f\right\| _{L^{p} (\R^{n} )} =\left(\int _{\R^{n} }\left|f(x)\right|^{p} dx \right)^{1/p} ,\]
with the usual modification when $p=\infty$.
For $s\in \R$ and $1<p<\infty $, the norms of nonhomogeneous Sobolev space $H_{p}^{s} (\R^{n} )$ and homogeneous Sobolev space $\dot{H}_{p}^{s} (\R^{n} )$ are defined by
\[\left\| f\right\| _{H_{p}^{s} (\R^{n} )} =\left\| F^{-1} \left(1+\left|\xi \right|^{2} \right)^{\frac{s}{2} } Ff\right\| _{L^{p} (\R^{n} )} , ~\left\| f\right\| _{\dot{H}_{p}^{s} (\R^{n} )} =\left\| F^{-1} \left|\xi \right|^{s} Ff\right\| _{L^{p} (\R^{n} )} .\]
We shall abbreviate $H_{2}^{s} (\R^{n} )$ and $\dot{H}_{2}^{s} (\R^{n} )$ respectively as $H^{s} (\R^{n} )$ and $\dot{H}^{s} (\R^{n} )$. We will also use the space-time mixed space $L^{\gamma } (I,\;X(\R^{n} ))$ whose norm is defined by
\[\;\left\|f\right\|_{L^{\gamma } (I,\;X(\R^{n} ))} =\left(\int _{I}\left\| f\right\| _{X(\R^{n} )}^{\gamma } dt \right)^{\frac{1}{\gamma } } ,\]
with the usual modification when $\gamma=\infty$, where $I\subset \R$ is an interval and $X(\R^{n} )$ is a normed space on $\R^{n} $ such as Lebesgue space or Sobolev space. If there is no confusion, $\R^{n} $ will be omitted in various function spaces. For two normed spaces $X$ and $Y$, $X\subset Y$ means that the space $X$ is continuously embedded in the space $Y$, that is, there exists a constant $C\left(>0\right)$ such that $\left\| f\right\| _{Y} \le C\left\| f\right\| _{X} $ for all $f\in X$.

Next, we recall some preliminary results.

\begin{lem}[Fractional Product Rule, \cite{CW91}]\label{lem 2.1.}
Let $s\ge 0$, $1<r,\;r_{2} ,\;p_{1} <\infty $, $1<r_{1} ,\;p_{2} \le \infty $. Assume that
\[\frac{1}{r} =\frac{1}{r_{i} }+\frac{1}{p_{i} }~(i=1,\;2).\]
Then we have
\begin{equation} \label{GrindEQ__2_1_}
\left\| fg\right\| _{\dot{H}_{r}^{s} } \lesssim\left\| f\right\| _{r_{1} } \left\| g\right\| _{\dot{H}_{p_{1} }^{s} } +\left\| f\right\| _{\dot{H}_{r_{2} }^{s} } \left\| g\right\| _{p_{2} } .
\end{equation}
\end{lem}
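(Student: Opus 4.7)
The plan is to prove this fractional product rule via a Littlewood-Paley paraproduct decomposition, which is the standard route to Kato-Ponce / Christ-Weinstein type inequalities. I would fix a smooth homogeneous Littlewood-Paley partition $\{P_j\}_{j\in\Z}$ on $\R^n$ and write $fg = \Pi_1(f,g) + \Pi_2(f,g) + \Pi_3(f,g)$, where $\Pi_1$ collects the low-high pairs $(P_{<j-N}f)(P_j g)$, $\Pi_2$ collects the high-low pairs $(P_j f)(P_{<j-N}g)$, and $\Pi_3$ collects the diagonal high-high pairs with $|j-k|\le N$, for a fixed integer $N$. The key analytic ingredients are the Littlewood-Paley characterization $\|h\|_{\dot H^s_r}\approx \|(\sum_j 2^{2js}|P_j h|^2)^{1/2}\|_{L^r}$, valid for $1<r<\infty$, and the Fefferman-Stein vector-valued maximal inequality.

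For the low-high piece $\Pi_1$, the spectrum of $(P_{<j-N}f)(P_j g)$ lies in an annulus of scale $2^j$, so the Littlewood-Paley pieces are essentially orthogonal. I would bound pointwise $|P_{<j-N}f|\lesssim Mf$, where $M$ is the Hardy-Littlewood maximal function, apply Hölder with exponents $r_1$ and $p_1$, and then invoke $L^{r_1}$-boundedness of $M$ together with the square-function characterization of $\dot H^s_{p_1}$ to arrive at $\|\Pi_1(f,g)\|_{\dot H^s_r}\lesssim \|f\|_{r_1}\|g\|_{\dot H^s_{p_1}}$. A symmetric argument yields $\|\Pi_2(f,g)\|_{\dot H^s_r}\lesssim \|f\|_{\dot H^s_{r_2}}\|g\|_{p_2}$. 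These two pieces account for the two terms on the right-hand side of \eqref{GrindEQ__2_1_}.

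The delicate piece is the high-high interaction $\Pi_3$: the product of two frequency-$2^j$ localized functions has spectrum spread across all scales $\lesssim 2^j$, so the orthogonality used for $\Pi_1,\Pi_2$ fails. For $s\ge 0$ one can still close the estimate because the geometric factor $2^{(k-j)s}$ for $k\le j$ is summable in $k$, allowing one to collapse $P_k\Pi_3$ into a tail sum in $j\gtrsim k$ and then route the $s$ derivatives either onto $f$ or onto $g$ according to which norm is available on the right. This flexibility is precisely why the conclusion of \eqref{GrindEQ__2_1_} appears as a \emph{sum} of two terms rather than a single product: either routing produces a valid estimate of $\Pi_3$, and one keeps both.

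The main obstacle I expect is the endpoint cases $r_1=\infty$ and $p_2=\infty$ permitted by the hypothesis, where $L^{r_1}$- (respectively $L^{p_2}$-) boundedness of the maximal function is not available. In those cases I would replace the maximal bound by the trivial pointwise estimate $|P_{<j-N}f|\le C\|f\|_\infty$ (or its analogue for $g$) and verify that the remaining vector-valued step only consumes the hypotheses $1<r<\infty$ together with $1<p_1<\infty$ (respectively $1<r_2<\infty$), which are exactly what is assumed. Modulo this check and the standard verification that the pointwise Leibniz rule plus telescoping sums applied when $s$ is an integer give the same bound, summing the three paraproduct estimates yields the claimed inequality.
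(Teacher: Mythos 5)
The paper does not actually prove Lemma 2.1; it is stated as a known result with a citation to Christ--Weinstein \cite{CW91}, so there is no in-paper argument to compare against. Your paraproduct sketch is precisely the standard route taken in that literature (Littlewood--Paley trichotomy, Fefferman--Stein vector-valued maximal inequality, square-function characterization of $\dot H^s_r$ for $1<r<\infty$), and the outline is sound, including the treatment of the endpoints $r_1=\infty$, $p_2=\infty$ --- though note that the Hardy--Littlewood maximal operator is trivially bounded on $L^\infty$, so the workaround you propose there is not even needed. The one point to repair is your claim that the high-high piece closes for all $s\ge 0$ because $2^{(k-j)s}$ is summable over $k\le j$: at $s=0$ that geometric factor is identically $1$ and the sum diverges, so the diagonal term must be handled separately at $s=0$ --- which is harmless, since for $s=0$ the whole inequality reduces to H\"older. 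Also, your closing remark about a separate telescoping argument for integer $s$ is unnecessary: the homogeneous square-function characterization is valid for every real $s$, so the paraproduct argument covers integer and non-integer orders uniformly.
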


\begin{cor}\label{cor 2.2.}
Let $s\ge 0$, $q\in \N$. Let ${1}<r,\;r_{k}^{i} <\infty $ for $1\le i,\;k\le q$. Assume that
\[\frac{1}{r} =\sum _{i=1}^{q}\frac{1}{r_{k}^{i} }  , \]
for any $1\le k\le q$. Then we have
\begin{equation} \label{GrindEQ__2_2_}
\left\| \prod _{i=1}^{q}f_{i}  \right\| _{\dot{H}_{r}^{s} } \lesssim\sum _{k=1}^{q}\left(\left\| f_{k} \right\| _{\dot{H}_{r_{k}^{k} }^{s} } \prod _{i\in I_{k} }\left\| f_{i} \right\| _{r_{k}^{i} }  \right) ,
\end{equation}
where $I_{k} =\left\{i\in \N:\;1\le i\le q,\;i\ne k\right\}$.
\end{cor}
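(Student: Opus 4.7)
My plan is to prove Corollary 2.2 by induction on $q$, using Lemma 2.1 (the bilinear fractional product rule) as the sole ingredient. The base case $q=1$ is trivial: the Hölder sum constraint forces $r=r_{1}^{1}$, and both sides of the inequality reduce to $\|f_{1}\|_{\dot H_{r}^{s}}$. The case $q=2$ is already Lemma 2.1 in a single application, with the two exponent choices $(r_{1},p_{1})=(r_{2}^{1},r_{2}^{2})$ and $(r_{2},p_{2})=(r_{1}^{1},r_{1}^{2})$; both satisfy $\frac{1}{r}=\frac{1}{r_{i}}+\frac{1}{p_{i}}$ by the standing hypothesis, and the two resulting summands on the right-hand side of Lemma 2.1 are exactly the $k=2$ and $k=1$ terms required by Corollary 2.2.

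For the inductive step $q-1\Rightarrow q$, I would decompose $\prod_{i=1}^{q}f_{i}=f_{q}\cdot\prod_{i=1}^{q-1}f_{i}$ and apply Lemma 2.1 separately $q$ times, each application tailored to extract one of the desired summands. To produce the $k=q$ term, apply Lemma 2.1 with $r_{2}=r_{q}^{q}$ and $p_{2}$ defined by $\frac{1}{p_{2}}=\sum_{i<q}\frac{1}{r_{q}^{i}}$; Hölder on $\prod_{i<q}f_{i}\in L^{p_{2}}$ then gives $\|f_{q}\|_{\dot H_{r_{q}^{q}}^{s}}\prod_{i<q}\|f_{i}\|_{r_{q}^{i}}$, exactly as desired. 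To produce the $k$-th term for $k<q$, apply Lemma 2.1 with $r_{1}=r_{k}^{q}$ and $p_{1}$ chosen so that $\frac{1}{p_{1}}=\sum_{i<q}\frac{1}{r_{k}^{i}}$; then invoke the inductive hypothesis on $\|\prod_{i<q}f_{i}\|_{\dot H^{s}_{p_{1}}}$ with the uniform choice $\rho_{k'}^{i}:=r_{k}^{i}$ for $1\le k',i\le q-1$, which is legitimate since $\frac{1}{p_{1}}=\sum_{i<q}\frac{1}{\rho_{k'}^{i}}$ holds for every $k'$. The diagonal summand $k'=k$ of that inductive bound reads $\|f_{k}\|_{\dot H^{s}_{r_{k}^{k}}}\prod_{i<q,\,i\ne k}\|f_{i}\|_{r_{k}^{i}}$, and multiplying by $\|f_{q}\|_{L^{r_{k}^{q}}}$ yields precisely the desired $k$-th term of the sum.

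The main obstacle, and the reason $q$ separate applications of Lemma 2.1 are required rather than a single one, is that each application forces a single Lebesgue exponent on $f_{q}$, whereas the $k$-th desired term requires $f_{q}\in L^{r_{k}^{q}}$ with $r_{k}^{q}$ possibly varying in $k$. A related subtlety is the handling of the off-diagonal summands ($k'\ne k$) that the inductive hypothesis produces in each of the $q-1$ secondary applications; these take the form $\|f_{k'}\|_{\dot H^{s}_{r_{k}^{k'}}}\prod_{i\ne k'}\|f_{i}\|_{r_{k}^{i}}$, not directly among the desired terms. I would absorb them either by iterating the argument on these off-diagonal contributions (reducing their Sobolev-product depth further), or by observing that after combining the $q$ derived inequalities and exploiting the freedom in the choice of $\rho_{k'}^{i}$ at each level, each off-diagonal term is dominated by some diagonal good$_{k'}$ via a compatible Hölder rearrangement. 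Careful bookkeeping to verify that every Lemma 2.1 hypothesis and every Hölder constraint is simultaneously met — and that the implicit constant remains universal through the $O(q)$ iterations — is the most delicate part; summing the extracted diagonal contributions over $k=1,\dots,q$ then closes the induction.
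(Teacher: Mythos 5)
Your overall strategy (induction on $q$ via Lemma 2.1 and H\"older) is the one the paper itself asserts, and your handling of the base cases and of the \emph{diagonal} terms is correct: the $k=q$ term comes from the summand of Lemma 2.1 with $(r_{2},p_{2})=(r_{q}^{q},p_{2})$, $\frac{1}{p_{2}}=\sum_{i<q}\frac{1}{r_{q}^{i}}$, followed by H\"older, and the $k$-th term for $k<q$ comes from splitting off $f_{q}$ in $L^{r_{k}^{q}}$ and taking the $k'=k$ summand of the inductive hypothesis with the uniform systems $\rho_{k'}^{i}=r_{k}^{i}$. The genuine gap is the disposal of the off-diagonal terms, which you correctly flag but do not close. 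Each such term has the form $\|f_{k'}\|_{\dot{H}_{r_{k}^{k'}}^{s}}\prod_{i\ne k'}\|f_{i}\|_{r_{k}^{i}}$ with $k'\ne k$: the Sobolev norm sits on $f_{k'}$ but carries the exponent of the $k$-th H\"older system, not the $k'$-th. Your first remedy (iterate further) cannot apply, because these terms are already fully factored into norms of single functions; there is no product left to decompose, and no further use of Lemma 2.1 or H\"older can turn the exponent $r_{k}^{k'}$ into $r_{k'}^{k'}$. Your second remedy (dominate the off-diagonal term by the diagonal one for $k'$) is false in general: $\|g\|_{\dot{H}_{\rho}^{s}}$ and $\|g\|_{\dot{H}_{\rho'}^{s}}$ are not comparable for $\rho\ne\rho'$, and H\"older rearrangements are unavailable once the norms act on individual factors. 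So the inductive step, as proposed, does not prove the corollary in its stated generality.

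It is worth noting where the induction \emph{does} close with a single application of Lemma 2.1 per step: when the Lebesgue exponents are compatible across systems, i.e.\ $r_{k}^{i}=a_{i}$ for all $i\ne k$ (independent of $k$), one has $\frac{1}{p_{1}}=\frac{1}{r}-\frac{1}{a_{q}}=\sum_{i<q}\frac{1}{r_{k}^{i}}$ simultaneously for every $k<q$, the inductive hypothesis can be invoked with $\rho_{k'}^{i}=r_{k'}^{i}$, and no off-diagonal leftovers appear. This compatible case is the only one the paper actually uses (e.g.\ in (3.49) and (3.59), where the $k$-th system is $(a_{1},\dots,\tilde{a}_{k},\dots,a_{q})$, differing from the others only in the $k$-th slot). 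For the corollary as literally stated, with $q$ unrelated H\"older systems, one needs a genuinely multilinear argument --- e.g.\ a Littlewood--Paley/paraproduct decomposition of $\prod_{i}f_{i}$ according to which factor carries the dominant frequency, so that each frequency regime can be estimated with its own system --- rather than iteration of the bilinear rule. In fairness, the paper's own ``proof'' is the same one-sentence claim and inherits the same issue; but your write-up, if completed honestly, would have to either add the compatibility hypothesis or replace the induction by a paraproduct argument.
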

\begin{proof} Using Lemma 2.1, H\"{o}lder inequality and induction on $q$, we can easily prove (2.2).\end{proof}

\begin{lem}[Fractional Chain Rule]\label{lem 2.3.}
Suppose $G\in C^{1} (\C)$ and $s\in \left(0,\;1\right)$. Then for $1<r,\;r_{2} <\infty $, and $1<r_{1} \le \infty $ satisfying $\frac{1}{r} =\frac{1}{r_{1} } +\frac{1}{r_{2} } $,
\begin{equation} \label{GrindEQ__2_3_}
\left\| G\left(u\right)\right\| _{\dot{H}_{r}^{s} } \lesssim\left\| G'\left(u\right)\right\| _{r_{1} } \left\|u\right\|_{\dot{H}_{r_{2} }^{s} } .
\end{equation}
\end{lem}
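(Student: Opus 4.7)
The estimate (2.3) is the classical fractional chain rule of Christ--Weinstein, and my plan is to follow the spirit of \cite{CW91}: reduce the Riesz-potential norm on the left to an integral over finite differences, apply the mean value theorem, and package the result through the Hardy--Littlewood maximal function. Concretely, the starting point is the Strichartz/Stein characterization, valid for $1<r<\infty$ and $0<s<1$,
\[
\|f\|_{\dot{H}^{s}_{r}(\R^{n})} \;\approx\; \bigl\|\mathcal{D}_{s} f\bigr\|_{L^{r}(\R^{n})},
\qquad
\mathcal{D}_{s} f(x) := \left(\int_{\R^{n}} \frac{|f(x+y)-f(x)|^{2}}{|y|^{n+2s}}\,dy\right)^{1/2}.
\]
This replaces the Fourier-multiplier description of $\dot H^{s}_{r}$ by a finite-difference quantity that is compatible with composition.

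Once this reduction is in place, the plan is to use the fundamental theorem of calculus to write
\[
G(u(x+y))-G(u(x)) \;=\; (u(x+y)-u(x))\int_{0}^{1} G'\!\bigl(u(x)+\tau(u(x+y)-u(x))\bigr)\,d\tau,
\]
and then to establish the pointwise domination
\[
\mathcal{D}_{s}\bigl(G(u)\bigr)(x) \;\lesssim\; M\bigl(G'(u)\bigr)(x)\cdot \mathcal{D}_{s} u(x),
\]
where $M$ is the Hardy--Littlewood maximal operator. After this, I would take the $L^{r}$ norm, apply H\"older with exponents $(r_{1},r_{2})$, and invoke the Hardy--Littlewood maximal theorem $\|M g\|_{L^{r_{1}}}\lesssim \|g\|_{L^{r_{1}}}$ (which also holds trivially for $r_{1}=\infty$) together with the characterization applied to $u$:
\[
\|G(u)\|_{\dot H^{s}_{r}} \lesssim \bigl\|M(G'(u))\cdot \mathcal{D}_{s} u\bigr\|_{L^{r}} \le \|M(G'(u))\|_{L^{r_{1}}}\,\|\mathcal{D}_{s} u\|_{L^{r_{2}}} \lesssim \|G'(u)\|_{L^{r_{1}}}\|u\|_{\dot H^{s}_{r_{2}}},
\]
which is precisely (2.3).

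The main obstacle is the pointwise step, because the intermediate value $u(x)+\tau(u(x+y)-u(x))$ appearing in the Taylor integrand is \emph{not} a value of $u$ at a nearby point, so one cannot simply replace $G'$ at this intermediate argument by $G'(u(x'))$ for some $x'$ close to $x$. The trick is to split the $y$-integration into dyadic shells $\{|y|\sim 2^{k}\}$, bound the integrand on each shell by an $L^{1}$-average of $|G'(u)|$ over a ball of radius $2^{k}$ centered at $x$ (this is where the convexity of the segment between $u(x)$ and $u(x+y)$, together with the freedom to enlarge the ball, is used), and then recognize the supremum of these averages as $M(G'(u))(x)$; a Cauchy--Schwarz summation over the shells absorbs the remaining weight $|y|^{-n-2s}$ into $\mathcal D_{s} u(x)$. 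Once this estimate is established, the remaining H\"older and maximal-function steps are routine.
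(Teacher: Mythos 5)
The paper does not actually prove Lemma 2.3: its ``proof'' is the one-line citation to \cite{CW91} for $1<r_{1}<\infty$ and to \cite{KPV93} for $r_{1}=\infty$, so your proposal has to be measured against those sources rather than against anything in the text. Your reduction to the Stein--Strichartz square function and the final H\"older/maximal-function bookkeeping are fine in outline, but the entire content of the argument is the pointwise domination $\mathcal{D}_{s}(G(u))(x)\lesssim M(G'(u))(x)\,\mathcal{D}_{s}u(x)$, and the ``trick'' you describe does not close the gap you yourself identify. The intermediate point $u(x)+\tau(u(x+y)-u(x))$ lies on a segment in the \emph{target} space $\C$; that segment need not meet the range of $u$ at all, so $|G'|$ evaluated there cannot be bounded by any $L^{1}$-average of $|G'(u(z))|$ over $z$ in a ball about $x$, no matter how the $y$-integral is cut into dyadic shells. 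In fact the inequality you are trying to prove pointwise fails even after integration for general $G\in C^{1}$: take $n=1$, $s$ small, $G'$ a bump supported near $1/2$ (so $G(0)=0$, $G(1)=1$), and $u_{\varepsilon}$ a mollification of $\chi_{[0,1]}$ at scale $\varepsilon$; then $\left\| G'(u_{\varepsilon})\right\| _{r_{1}}\lesssim\varepsilon^{1/r_{1}}\to0$ and $\left\| u_{\varepsilon}\right\| _{\dot{H}_{r_{2}}^{s}}=O(1)$ when $s<1/r_{2}$, while $\left\| G(u_{\varepsilon})\right\| _{\dot{H}_{r}^{s}}$ stays bounded away from zero. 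So any correct proof must use more than $G\in C^{1}$. The arguments in \cite{CW91, KPV93} run through a Littlewood--Paley telescoping $G(u)=\sum_{k}\left[G(P_{\le k}u)-G(P_{\le k-1}u)\right]$ and exploit a structural hypothesis on $G'$ --- its values on the segment joining $a$ and $b$ are controlled by $|G'(a)|+|G'(b)|$ --- which is what lets the maximal function appear and which is satisfied by the power-type nonlinearities $f(u)\approx|u|^{\sigma}u$ that this paper actually feeds into the lemma. Your sketch never supplies (or uses) such a hypothesis, so the key step would fail.

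A secondary issue: the equivalence $\left\| f\right\| _{\dot{H}_{r}^{s}}\approx\left\| \mathcal{D}_{s}f\right\| _{L^{r}}$ you start from is Stein's theorem and is only valid for $r>\frac{2n}{n+2s}$. Since the lemma is asserted for all $1<r<\infty$ and $\frac{2n}{n+2s}>1$ when $n\ge2$, at least one of the two directions you invoke (at exponent $r$ for $G(u)$, and at exponent $r_{2}$ for $u$) is unavailable in part of the stated range; so even granting the pointwise bound, the proposal would not cover the full lemma.
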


\begin{proof} See \cite{CW91} for \textit{$1<q_{1} <\infty $ } and \cite{KPV93} for \textit{$q_{1} =\infty $}.\end{proof}

Next, we recall some useful embeddings on Sobolev spaces. See \cite{BHHG11} for example.

\begin{lem}\label{lem 2.4.}
Let $-\infty <s_{2} \le s_{1} <\infty $ and $1<p_{1} \le p_{2} <\infty $ with $s_{1} -\frac{n}{p_{1} } =s_{2} -\frac{n}{p_{2} } $. Then we have the following embeddings:
\[\dot{H}_{p_{1} }^{s_{1} } \subset \dot{H}_{p_{2} }^{s_{2}},~H_{p_{1} }^{s_{1} } \subset H_{p_{2} }^{s_{2} } .\]
\end{lem}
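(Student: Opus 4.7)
The plan is to reduce both embeddings to the mapping properties of the Riesz and Bessel potentials, and then invoke the Hardy--Littlewood--Sobolev (HLS) inequality. The assumptions already align everything: from $s_1-n/p_1=s_2-n/p_2$ and $s_1\ge s_2$, $p_1\le p_2$, I set $\alpha:=s_1-s_2\ge 0$, which forces $1/p_1-1/p_2=\alpha/n$. If $\alpha=0$ then $p_1=p_2$ and both embeddings are trivial, so I may assume $\alpha\in(0,n)$ and $1<p_1<p_2<\infty$.

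For the homogeneous embedding $\dot H^{s_1}_{p_1}\subset\dot H^{s_2}_{p_2}$, I rewrite the target norm using the fractional Laplacian: $\|f\|_{\dot H^{s_2}_{p_2}}=\|(-\Delta)^{s_2/2}f\|_{p_2}$. Setting $g:=(-\Delta)^{s_1/2}f$, the identity $(-\Delta)^{s_2/2}f=(-\Delta)^{-\alpha/2}g=I_\alpha g$ (Riesz potential of order $\alpha$) reduces the claim to the HLS inequality
\[
\|I_\alpha g\|_{L^{p_2}}\lesssim\|g\|_{L^{p_1}},
\]
which holds precisely under our hypotheses $1<p_1<p_2<\infty$ and $1/p_1-1/p_2=\alpha/n$. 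This gives $\|f\|_{\dot H^{s_2}_{p_2}}\lesssim\|f\|_{\dot H^{s_1}_{p_1}}$ and hence the continuous embedding.

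For the inhomogeneous embedding $H^{s_1}_{p_1}\subset H^{s_2}_{p_2}$, I proceed analogously with the Bessel potential $J^{-\alpha}:=(1-\Delta)^{-\alpha/2}$: writing $h:=(1-\Delta)^{s_1/2}f$, one has $(1-\Delta)^{s_2/2}f=J^{-\alpha}h$, so the problem reduces to the boundedness $J^{-\alpha}:L^{p_1}\to L^{p_2}$. The convolution kernel $G_\alpha$ of $J^{-\alpha}$ satisfies the pointwise bound
\[
G_\alpha(x)\lesssim |x|^{\alpha-n}\chi_{\{|x|\le 1\}}+e^{-|x|/2}\chi_{\{|x|>1\}},
\]
which is the standard estimate for the Bessel kernel. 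The first piece is dominated by the Riesz kernel and yields the required $L^{p_1}\to L^{p_2}$ bound by HLS; the second piece lies in $L^r(\R^n)$ for every $r\in[1,\infty]$, so by Young's inequality with $1/r=1+1/p_2-1/p_1\in(0,1]$ it also maps $L^{p_1}$ into $L^{p_2}$. Adding these gives $\|J^{-\alpha}h\|_{p_2}\lesssim\|h\|_{p_1}$, hence the embedding.

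The only nontrivial technical step is the kernel estimate for $G_\alpha$; this is classical and recorded in the reference \cite{BHHG11} cited by the authors, so I would quote it rather than re-derive it. Everything else is a mechanical application of HLS and Young's inequality, so no serious obstacle is expected.
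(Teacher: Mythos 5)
Your proposal is correct. Note that the paper does not actually prove Lemma 2.4 at all --- it simply points to the reference \cite{BHHG11} --- so there is no in-paper argument to compare against; what you have written is the classical potential-theoretic proof that such references contain. The reduction is sound: the hypotheses force $\alpha:=s_1-s_2=n(1/p_1-1/p_2)\in[0,n)$, the homogeneous case is exactly the Hardy--Littlewood--Sobolev bound $\|I_\alpha g\|_{p_2}\lesssim\|g\|_{p_1}$ applied to $g=(-\Delta)^{s_1/2}f$, and the inhomogeneous case follows from the standard Bessel-kernel estimate together with HLS near the origin and Young's inequality (with $1/r=1-\alpha/n\in(0,1]$) at infinity. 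Two small points worth making explicit if you write this up: the identity $(-\Delta)^{s_2/2}f=I_\alpha(-\Delta)^{s_1/2}f$ should be justified on a dense class (e.g.\ Schwartz functions with Fourier transform vanishing near the origin) and then extended by density, since for general $s_1,s_2\in\R$ the operators are defined distributionally; and the domination $G_\alpha(x)\chi_{\{|x|\le1\}}\lesssim|x|^{\alpha-n}$ transfers to the convolution estimate because all kernels involved are nonnegative. Neither is a gap, just bookkeeping.
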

\begin{lem}\label{lem 2.5.}
Let $-\infty <s<\infty $ and $1<p<\infty $. Then we have
\begin{enumerate}
\item  $H_{p}^{s+\varepsilon } \subset H_{p}^{s}~(\varepsilon >0)$,

\item  $H_{p}^{s} \subset L^{\infty }~(s>{n \mathord{\left/{\vphantom{n p}}\right.\kern-\nulldelimiterspace} p} )$,

\item  $H_{p}^{s} =L^{p} \bigcap \dot{H}_{p}^{s}~ (s>0)$.
\end{enumerate}
\end{lem}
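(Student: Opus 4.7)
My plan is to treat the three assertions by a uniform strategy: reduce each inclusion to the $L^{p}$-boundedness of an explicit Fourier multiplier, and then invoke the Mikhlin multiplier theorem, which is available for $1<p<\infty$. All three multipliers involved will be smooth away from the origin (or even smooth everywhere, in the inhomogeneous case) with symbols decaying like polynomials, so checking the Mikhlin condition $|\xi|^{|\alpha|}|\partial^{\alpha}m(\xi)|\lesssim 1$ will be routine.

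For part (i), I would write
\[
(1+|\xi|^{2})^{s/2}=m(\xi)\,(1+|\xi|^{2})^{(s+\varepsilon)/2},\qquad m(\xi)=(1+|\xi|^{2})^{-\varepsilon/2},
\]
observe that $m$ is smooth on $\R^{n}$ with $|\partial^{\alpha}m(\xi)|\lesssim (1+|\xi|)^{-\varepsilon-|\alpha|}$, hence satisfies Mikhlin, and conclude $\|f\|_{H_{p}^{s}}\lesssim\|f\|_{H_{p}^{s+\varepsilon}}$. For part (ii), I would use the Bessel potential representation $f=G_{s}*g$, where $\widehat{G_{s}}(\xi)=(1+|\xi|^{2})^{-s/2}$ and $g=F^{-1}(1+|\xi|^{2})^{s/2}Ff\in L^{p}$ satisfies $\|g\|_{p}=\|f\|_{H_{p}^{s}}$. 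The classical decay estimate $G_{s}\in L^{q}(\R^{n})$ for $1\le q<n/(n-s)$ (when $0<s<n$) together with $s>n/p$ gives $G_{s}\in L^{p'}$, and Young's inequality then yields $\|f\|_{\infty}\le\|G_{s}\|_{p'}\|g\|_{p}\lesssim\|f\|_{H_{p}^{s}}$; the case $s\ge n$ is easier since $G_{s}$ is then bounded and integrable.

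For part (iii), the forward inclusion $H_{p}^{s}\subset L^{p}\cap\dot{H}_{p}^{s}$ reduces to showing that the symbols $(1+|\xi|^{2})^{-s/2}$ and $|\xi|^{s}(1+|\xi|^{2})^{-s/2}$ define bounded operators on $L^{p}$; both are smooth on $\R^{n}\setminus\{0\}$ and satisfy the (homogeneous) Mikhlin condition provided $s>0$. For the reverse inclusion, I would perform a Littlewood--Paley splitting $f=\varphi(D)f+(1-\varphi(D))f$ with $\varphi\in C_{c}^{\infty}$ equal to $1$ near the origin; the low-frequency piece is estimated by $\|f\|_{p}$ via boundedness of the multiplier $\varphi(\xi)(1+|\xi|^{2})^{s/2}$ (which is Schwartz, so trivially Mikhlin), and the high-frequency piece by $\|f\|_{\dot{H}_{p}^{s}}$ via boundedness of $(1-\varphi(\xi))(1+|\xi|^{2})^{s/2}|\xi|^{-s}$, which is smooth everywhere (since the cutoff kills the singularity at the origin) and has the required derivative decay.

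The only step that requires any care is the high-frequency half of (iii), where one must verify that the composite symbol $(1-\varphi(\xi))(1+|\xi|^{2})^{s/2}|\xi|^{-s}$ is indeed $O(1)$ together with its derivatives; this reduces to the elementary pointwise bound $(1+|\xi|^{2})^{s/2}\lesssim|\xi|^{s}$ for $|\xi|\ge 1$ and $s\ge 0$, and to the same kind of routine computation as in part (i). The other components are standard consequences of Mikhlin and the Bessel kernel asymptotics, as recorded in the reference \cite{BHHG11}.
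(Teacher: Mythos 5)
Your argument is correct and is essentially the standard one: the paper itself gives no proof of Lemma 2.5, only the pointer ``See \cite{BHHG11} for example,'' and the Mikhlin-multiplier reductions for (i) and (iii) together with the Bessel-kernel argument for (ii) are exactly what that reference records. The only micro-imprecision is in the boundary case $s=n$ of part (ii): the Bessel kernel $G_{n}$ has a logarithmic singularity at the origin, so it is not bounded, but it still lies in $L^{q}$ for every $q<\infty$ and in particular in $L^{p'}$ (since $p>1$ forces $p'<\infty$), so your conclusion stands unchanged.
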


\begin{cor}\label{cor 2.6.}
Let $-\infty <s_{2} \le s_{1} <\infty $ and $1<p_{1} \le p_{2} <\infty $ with $s_{1} -\frac{n}{p_{1} } \ge s_{2} -\frac{n}{p_{2} } $. Then we have $H_{p_{1} }^{s_{1} } \subset H_{p_{2} }^{s_{2} } $.
\end{cor}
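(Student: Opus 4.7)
The plan is to deduce this embedding by chaining Lemma 2.4 and Lemma 2.5(i), which are the two building blocks already available. The strategy is to introduce an intermediate Sobolev space whose role is to separate the ``gain of integrability at fixed scaling'' supplied by Lemma 2.4 from the ``loss of regularity at fixed integrability'' supplied by Lemma 2.5(i).

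Concretely, I would define the intermediate regularity index
\[
s_1' := s_2 + \frac{n}{p_1} - \frac{n}{p_2},
\]
which is the unique value for which the pairs $(s_1', p_1)$ and $(s_2, p_2)$ share the same Sobolev scaling, i.e.\ $s_1' - n/p_1 = s_2 - n/p_2$. The two hypotheses of the corollary are exactly what is needed to place $s_1'$ in the correct window: the inequality $p_1 \le p_2$ forces $s_1' \ge s_2$, while the inequality $s_1 - n/p_1 \ge s_2 - n/p_2$ rearranges to $s_1 \ge s_1'$. Hence $s_2 \le s_1' \le s_1$, and $s_1'$ is admissible on both sides of the chain.

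Next I would compose two inclusions. First, Lemma 2.5(i) with $\varepsilon = s_1 - s_1' > 0$ gives $H_{p_1}^{s_1} \subset H_{p_1}^{s_1'}$ (trivial if $s_1 = s_1'$). Second, Lemma 2.4 applied to the pairs $(s_1', p_1)$ and $(s_2, p_2)$, whose scaling invariants agree by construction and which satisfy $1 < p_1 \le p_2 < \infty$ and $s_2 \le s_1'$, gives $H_{p_1}^{s_1'} \subset H_{p_2}^{s_2}$. Composing these two continuous embeddings yields the required inclusion $H_{p_1}^{s_1} \subset H_{p_2}^{s_2}$.

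There is essentially no genuine obstacle here; the corollary is a routine consequence of the two preceding lemmas. The only step requiring a moment of thought is the identification of the intermediate exponent $s_1'$, which is forced uniquely by the requirement that the scaling invariant of $H_{p_1}^{s_1'}$ coincide with that of $H_{p_2}^{s_2}$. Once this choice is fixed, the hypotheses of Lemma 2.4 and Lemma 2.5(i) follow directly from $p_1 \le p_2$ and $s_1 - n/p_1 \ge s_2 - n/p_2$, respectively.
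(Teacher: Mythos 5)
Your proof is correct and is essentially the paper's own argument: the paper simply states that the corollary follows from Lemma 2.4 and Lemma 2.5(1), and your choice of the intermediate index $s_{1}'=s_{2}+\frac{n}{p_{1}}-\frac{n}{p_{2}}$ is exactly the natural way to chain those two lemmas. The verification that $s_{2}\le s_{1}'\le s_{1}$ under the stated hypotheses is also correct.
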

\begin{proof}
The proof follows from Lemma 2.4 and Lemma 2.5 (1).
\end{proof}

\begin{lem}[\cite{AK21}]\label{lem 2.7.}
Let $s>0$, $1<p<\infty $ and $v=s-\left[s\right]$. Then $\sum _{\left|\alpha \right|=\left[s\right]}\left\| D^{\alpha } f\right\| _{\dot{H}_{p}^{v} }  $ is an equivalent norm on $\dot{H}_{p}^{s} \left(\R^{n} \right)$.
\end{lem}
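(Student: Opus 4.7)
Set $m := [s]$ and $v := s - m \in [0,1)$; when $m = 0$ the identity $\sum_{|\alpha|=0}\|D^\alpha f\|_{\dot H_p^v} = \|f\|_{\dot H_p^s}$ is tautological, so assume $m\ge 1$. Writing $\Lambda^\sigma := F^{-1}|\xi|^\sigma F$ so that $\|g\|_{\dot H_p^\sigma} = \|\Lambda^\sigma g\|_p$, I observe that $\|D^\alpha f\|_{\dot H_p^v} = \|F^{-1}[|\xi|^v(i\xi)^\alpha \hat f(\xi)]\|_p$ for each multi-index $\alpha$. The whole proof will reduce to applying the Mikhlin multiplier theorem (equivalently, compositions of Riesz transforms) to the $0$-homogeneous symbols $\xi^\alpha/|\xi|^m$ with $|\alpha|=m$, which are $C^\infty$ on $\mathbb R^n\setminus\{0\}$ and hence bounded $L^p\to L^p$ for $1<p<\infty$.

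For the direction $\sum_{|\alpha|=m}\|D^\alpha f\|_{\dot H_p^v} \lesssim \|f\|_{\dot H_p^s}$, fix $|\alpha|=m$ and use the pointwise identity
\[|\xi|^v(i\xi)^\alpha \hat f(\xi) = i^m\,\frac{\xi^\alpha}{|\xi|^m}\,|\xi|^{m+v}\hat f(\xi) = i^m \frac{\xi^\alpha}{|\xi|^m}\,F(\Lambda^s f)(\xi).\]
Taking $F^{-1}$ and using the $L^p$-boundedness of the multiplier with symbol $\xi^\alpha/|\xi|^m$ gives $\|D^\alpha f\|_{\dot H_p^v}\lesssim \|\Lambda^s f\|_p = \|f\|_{\dot H_p^s}$. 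Summing over $|\alpha|=m$ completes this direction.

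For the reverse direction, the algebraic key is the multinomial expansion
\[|\xi|^{2m} = (\xi_1^2 + \cdots + \xi_n^2)^m = \sum_{|\alpha|=m}\frac{m!}{\alpha!}\xi^{2\alpha}.\]
Multiplying by $|\xi|^{v-m}\hat f(\xi)$ one obtains
\[|\xi|^{m+v}\hat f(\xi) = \sum_{|\alpha|=m}\frac{m!}{\alpha!}\frac{\xi^\alpha}{|\xi|^m}\,|\xi|^v\xi^\alpha\hat f(\xi) = \sum_{|\alpha|=m}\frac{m!}{i^m\,\alpha!}\,\frac{\xi^\alpha}{|\xi|^m}\,F(\Lambda^v D^\alpha f)(\xi).\]
Applying $F^{-1}$, taking $\|\cdot\|_p$, and invoking once more the $L^p$-boundedness of the multipliers $\xi^\alpha/|\xi|^m$ yields $\|f\|_{\dot H_p^s} \lesssim \sum_{|\alpha|=m}\|\Lambda^v D^\alpha f\|_p = \sum_{|\alpha|=m}\|D^\alpha f\|_{\dot H_p^v}$, which combined with the previous paragraph proves the equivalence of the two quantities and thus shows that the sum defines an equivalent norm on $\dot H_p^s(\mathbb R^n)$.

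The only substantive ingredient --- and therefore the ``main obstacle'', if any --- is the $L^p$-boundedness of the $0$-homogeneous multipliers $\xi^\alpha/|\xi|^m$ with $|\alpha|=m$; this is classical via Mikhlin's theorem, or by recognizing the multiplier as (up to constants) an $m$-fold product of Riesz-transform symbols $i\xi_j/|\xi|$. The hypothesis $1<p<\infty$ enters precisely here. No subtlety arises from $v\in[0,1)$ possibly being $0$, since $\dot H_p^0$ is interpreted as $L^p$ and the symbol manipulations above remain valid verbatim.
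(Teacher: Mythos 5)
Your proof is correct. The paper itself gives no argument for this lemma --- it is quoted verbatim from \cite{AK21} --- so there is nothing to compare against line by line; your route via the multinomial identity $|\xi|^{2m}=\sum_{|\alpha|=m}\frac{m!}{\alpha!}\xi^{2\alpha}$ together with the $L^{p}$-boundedness ($1<p<\infty$) of the $0$-homogeneous Mikhlin multipliers $\xi^{\alpha}/|\xi|^{m}$ is the standard way to establish this equivalence of norms, and both directions of your estimate are complete, including the degenerate cases $m=0$ and $v=0$. The only point worth flagging is the routine caveat that elements of $\dot H^{s}_{p}$ are defined modulo polynomials, so the symbol manipulations should be read on a dense class (e.g.\ Schwartz functions with $\hat f$ vanishing near the origin) and extended by density; this does not affect the argument.
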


We end this section with recalling the well-known Strichartz estimates. See \cite{C03, LP15, BHHG11} for instance.

\begin{lem}[Strichartz estimates]\label{lem 2.8.}
Let $S(t)=e^{it\Delta } $ and $s\in \R$. Then we have
\begin{equation} \label{GrindEQ__2_4_}
\left\| S(t)\phi \right\| _{S\left(I,\;\dot{H}^{s} \right)} \lesssim\left\| \phi \right\| _{\dot{H}^{s} } ,
\end{equation}
\begin{equation} \label{GrindEQ__2_5_}
\left\| \int _{0}^{t}S(t-\tau )f(\tau )d\tau  \right\| _{S\left(I,\;\dot{H}^{s} \right)} \lesssim\left\| f\right\| _{S'\left(I,\;\dot{H}^{s} \right)} ,
\end{equation}
where $I$ is an arbitrary interval satisfying $0\in \bar{I}$.
\end{lem}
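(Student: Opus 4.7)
The plan is to apply the Strichartz estimates of Lemma~2.8 to the Duhamel formulation of the difference $u_m-u$ together with the nonlinearity estimates advertised in the introduction (Lemmas~3.1--3.15), and, when necessary, to upgrade the $H^{s-\varepsilon}$ convergence supplied by Theorem~1.7 into $H^s$ convergence via a bootstrap. The dichotomy in the statement comes from whether the difference estimate for $|x|^{-b}(f(u_m)-f(u))$ can be closed linearly in $\|u_m-u\|_{S(I,H^s)}$ (polynomial case or $\sigma\ge\lceil s\rceil$, giving Lipschitz dependence) or whether a genuinely H\"older piece arises from (1.4) (giving continuity only, through a bootstrap).

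First, I fix $[-S,T]\subset(-T_{\min}(\phi),T_{\max}(\phi))$ and set $M=\|u\|_{S([-S,T],H^s)}<\infty$, which is finite by Theorem~1.5 and iterating Strichartz bounds across the compact interval. I then show that for $m$ large, $u_m$ exists on $[-S,T]$ with $\|u_m\|_{S([-S,T],H^s)}\le 2M$: partition $[-S,T]$ into $N$ sub-intervals $I_j=[t_j,t_{j+1}]$ of length $\tau>0$ so small that the contraction argument behind Theorem~1.4, anchored at $u(t_j)$, yields a local solution of size $\lesssim M$, and use the $H^s$-convergence $u_m(t_j)\to u(t_j)$, obtained inductively from the estimates of the next step, to prolong $u_m$ through $I_j$ with the same bound. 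On each $I_j$, applying Lemma~2.8 to
\[
u_m(t)-u(t)=S(t-t_j)\bigl(u_m(t_j)-u(t_j)\bigr)-i\lambda\int_{t_j}^{t}S(t-\rho)|x|^{-b}\bigl(f(u_m)-f(u)\bigr)(\rho)\,d\rho
\]
gives
\[
\|u_m-u\|_{S(I_j,H^s)}\lesssim\|u_m(t_j)-u(t_j)\|_{H^s}+\bigl\||x|^{-b}(f(u_m)-f(u))\bigr\|_{S'(I_j,H^s)}.
\]

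In the polynomial or $\sigma\ge\lceil s\rceil$ regime, the Lemma~3.6/3.7 estimate is expected to yield
\[
\bigl\||x|^{-b}(f(u_m)-f(u))\bigr\|_{S'(I_j,H^s)}\lesssim\tau^{\theta}M^{\sigma}\|u_m-u\|_{S(I_j,H^s)}
\]
for some $\theta>0$, so shrinking $\tau$ (depending only on $M$) absorbs the right-hand side and produces $\|u_m-u\|_{S(I_j,H^s)}\le C\|u_m(t_j)-u(t_j)\|_{H^s}$. Iterating over $N$ sub-intervals yields the Lipschitz bound $\|u_m-u\|_{S([-S,T],H^s)}\le C^{N}\|\phi_m-\phi\|_{H^s}$, which is the second clause of the theorem. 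In the genuinely non-smooth regime, Lemmas~3.12--3.15 are expected to provide a mixed estimate of the shape
\[
\bigl\||x|^{-b}(f(u_m)-f(u))\bigr\|_{S'(I_j,H^s)}\lesssim\tau^{\theta}M^{\sigma}\|u_m-u\|_{S(I_j,H^s)}+\tau^{\theta}M^{\sigma-\delta}\|u_m-u\|_{S(I_j,H^{s-\varepsilon})}^{\delta}\|u_m-u\|_{S(I_j,H^s)}^{1-\delta},
\]
with $\delta>0$ dictated by the H\"older exponent $\min\{\sigma-\lceil s\rceil+1,1\}$ in (1.4). The first summand is absorbed after shrinking $\tau$; in the second, the a priori bound $\|u_m-u\|_{S(I_j,H^s)}\le 4M$ combined with $\|u_m-u\|_{S(I_j,H^{s-\varepsilon})}\to 0$ from Theorem~1.7 forces the whole term to $0$, giving $\|u_m-u\|_{S(I_j,H^s)}\to 0$. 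Iterating on $j$ then yields convergence on all of $[-S,T]$.

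The main obstacle is producing the mixed H\"older/interpolation estimate in the non-smooth regime in a form compatible with the singular weight $|x|^{-b}$: one must simultaneously balance H\"older exponents and Sobolev indices so that the $H^{s-\varepsilon}$ factor supplied by Theorem~1.7 enters, all Lebesgue exponents arising from the weight remain in the admissible range, and the time exponent $\theta$ is strictly positive. This is precisely the technical content of Lemmas~3.1--3.15, and the proof of Theorem~1.8 proper reduces to carefully plugging these estimates into the scheme above.
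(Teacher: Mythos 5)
Your proposal does not address the statement you were asked to prove. The statement is Lemma 2.8, the Strichartz estimates themselves: the homogeneous bound (2.4) for the free propagator $S(t)=e^{it\Delta}$ and the inhomogeneous (retarded) bound (2.5) in the $\dot H^s$-based Strichartz and dual Strichartz norms. What you have written is instead a proof scheme for Theorem 1.8 (continuous dependence of the solution flow), in which Lemma 2.8 is invoked as a known tool -- your very first sentence is ``apply the Strichartz estimates of Lemma 2.8 to the Duhamel formulation.'' Nothing in your text supplies an argument for (2.4) or (2.5); they are assumed throughout.

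For the record, the paper itself gives no proof of Lemma 2.8 either: it is quoted as a classical result with references to Cazenave, Linares--Ponce, and Wang et al. If you did want to prove it, the standard route is the dispersive estimate $\|S(t)\phi\|_{L^{p}}\lesssim |t|^{-n(1/2-1/p)}\|\phi\|_{L^{p'}}$ from the explicit kernel, a $TT^{*}$ argument combined with the Hardy--Littlewood--Sobolev inequality to get the non-endpoint admissible pairs, Keel--Tao for the endpoint $p=2n/(n-2)$ when $n\ge 3$, the Christ--Kiselev lemma to pass to the retarded estimate (2.5) for non-diagonal pairs of exponents, and finally the observation that $|\nabla|^{s}$ commutes with $S(t)$ to upgrade the $L^{2}$-based estimates to $\dot H^{s}$. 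None of this appears in your proposal, so as a proof of the stated lemma it is entirely missing; as a sketch of Theorem 1.8 it is a reasonable outline of the paper's Section 3, but that is not the task at hand.
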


\noindent
\section{Proof of Theorem 1.7 }

In this section, we prove Theorem 1.7.
It follows from Theorem 1.5 that given $\phi \in H^{s} \left(\R^{n} \right)$, there exist $T_{\max } ,\, T_{\min } \in (0,\, \infty ]$ such that (1.1) has a unique, maximal solution $u\in C\left(\left(-T_{\min } ,\;T_{\max } \right),H^{s} \left(\R^{n} \right)\right)$. There exists $0<T<T_{\max } ,\, T_{\min } $ such that if $\phi _{m} \to \phi $ in $H^{s} \left(\R^{n} \right)$ and if $u_{m} $ denotes the solution of (1.1) with the initial data $\phi _{m} $, then $\left\| \phi _{m} \right\| _{H^{s} } \le 2\left\| \phi \right\| _{H^{s} } $ for $n$ large, and we have $0<T<T_{\max } \left(\phi _{m} \right),\, T_{\min } \left(\phi _{m} \right)$ for all sufficiently large $m$ and $u_{m} $ is bounded in $S\left(\left[-T,\;T\right],\;H^{s} \right)$.
Since $u$, $u_{m} $ satisfy integral equation:
\[u\left(t\right)=e^{it\Delta } \phi -i\lambda \int _{0}^{t}e^{i\left(t-\tau \right)\Delta } |x|^{-b} f\left(u\left(\tau \right)\right)d\tau  ,\]
\[u_{m} \left(t\right)=e^{it\Delta } \phi _{m} -i\lambda \int _{0}^{t}e^{i\left(t-\tau \right)\Delta } |x|^{-b} f\left(u_{m} \left(\tau \right)\right)d\tau  ,\]
respectively, we have
\begin{equation} \label{GrindEQ__3_1_}
u_{m} \left(t\right)-u\left(t\right)=e^{it\Delta } \left(\phi _{m} -\phi \right)-i\lambda \int _{0}^{t}e^{i\left(t-\tau \right)\Delta } |x|^{-b} \left(f\left(u_{m} \left(\tau \right)\right)-f\left(u\left(\tau \right)\right)\right)d\tau
\end{equation}

We are to prove that there exits $T>0$ sufficiently small such that as $m\to \infty $,
\begin{equation} \label{GrindEQ__3_2_}
u_{m} \to u ~~\textrm{in}~~ L^{\gamma \left(r\right)} \left(\left[-T,\, T\right],\, H_{r}^{s} \left(\R^{n} \right)\right),
\end{equation}
for every admissible pair $\left(r\left(r\right),\, r\right)$.
If this has been done, then Theorem 1.7 follows by iterating this property to cover any compact subset of $\left(-T_{\min } ,\, T_{\max } \right)$. See \cite{CFH11, DYC13} and Chapter 3 or 4 of \cite{C03}.

We divide the proof of (3.2) in two cases: $0<s<\frac{n}{2} $ and $\frac{n}{2} \le s<\min \left\{\frac{n}{2} +1,\, n\right\}$.

\subsection{Case 1. $0<s<\frac{n}{2} $}

\begin{lem}\label{lem 3.1.}
Let $p>1$, $0<s<1$ and $\sigma \ge 1$. Assume that $f\in C^{2} \left(\C\to \C\right)$ satisfies
\begin{equation} \label{GrindEQ__3_3_}
\left|f^{\left(k\right)} \left(u\right)\right|\lesssim\left|u\right|^{\sigma +1-k} ,
\end{equation}
for any $0\le k\le 2$ and $u\in \C$. Suppose also that
\begin{equation} \label{GrindEQ__3_4_}
\frac{1}{p} =\sigma \left(\frac{1}{r} -\frac{s}{n} \right)+\frac{1}{r} ,~\frac{1}{r} -\frac{s}{n} >0.
\end{equation}
Then we have
\[\left\| f(u)-f(v)\right\| _{\dot{H}_{p}^{s} } \lesssim\left(\left\| u\right\| _{\dot{H}_{r}^{s} }^{\sigma } +\left\| v\right\| _{\dot{H}_{r}^{s} }^{\sigma } \right)\left\| u-v\right\| _{\dot{H}_{r}^{s} } .\]
\end{lem}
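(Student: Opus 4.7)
\medskip

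\noindent\textbf{Proof proposal.}

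The plan is to reduce the difference $f(u)-f(v)$ to a product and then apply the fractional product rule (Lemma 2.1) together with the fractional chain rule (Lemma 2.3), with Sobolev embeddings (Lemma 2.4) used to convert $L^{q}$ norms of $u,v$ into $\dot H^{s}_{r}$ norms via the assumption $\tfrac1r-\tfrac sn>0$.

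First I would write, by the fundamental theorem of calculus in the real variables underlying $\C=\R^{2}$,
\[
f(u)-f(v)=(u-v)\,g_{1}(u,v)+\overline{(u-v)}\,g_{2}(u,v),
\]
where $g_{j}(u,v)=\int_{0}^{1}\partial f(v+\theta(u-v))\,d\theta$ with $\partial$ standing for $\partial_{z}$ or $\partial_{\bar z}$. Since $f\in C^{2}(\C)$ with $|f^{(k)}|\lesssim|\cdot|^{\sigma+1-k}$, one has the pointwise bounds $|g_{j}|\lesssim |u|^{\sigma}+|v|^{\sigma}$ and $|\partial g_{j}/\partial z|,|\partial g_{j}/\partial\bar z|\lesssim |u|^{\sigma-1}+|v|^{\sigma-1}$.

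Next I would apply Lemma 2.1 to the product $(u-v)g_{j}$ in $\dot H^{s}_{p}$, splitting in two admissible ways. For the first piece I would put the regularity on $u-v$: choose exponents so that
\[
\|(u-v)g_{j}\|_{\dot H^{s}_{p}}\lesssim \|u-v\|_{\dot H^{s}_{r}}\|g_{j}\|_{p_{1}}+\|u-v\|_{L^{r_{2}}}\|g_{j}\|_{\dot H^{s}_{p_{2}}},
\]
with $\tfrac{1}{p_{1}}=\tfrac{1}{p}-\tfrac{1}{r}=\sigma\bigl(\tfrac1r-\tfrac sn\bigr)$ and $\tfrac{1}{r_{2}}=\tfrac1r-\tfrac sn$, so that $\tfrac{1}{p_{2}}=\sigma\bigl(\tfrac1r-\tfrac sn\bigr)+\tfrac sn$. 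The Sobolev embedding $\dot H^{s}_{r}\subset L^{r_{2}}$ then upgrades $\|u-v\|_{L^{r_{2}}}$ to $\|u-v\|_{\dot H^{s}_{r}}$, so it suffices to bound $\|g_{j}\|_{p_{1}}$ and $\|g_{j}\|_{\dot H^{s}_{p_{2}}}$ by $(\|u\|_{\dot H^{s}_{r}}+\|v\|_{\dot H^{s}_{r}})^{\sigma}$.

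For $\|g_{j}\|_{p_{1}}$ I would apply the pointwise bound and H\"older, getting $\|g_{j}\|_{p_{1}}\lesssim\|u\|_{\sigma p_{1}}^{\sigma}+\|v\|_{\sigma p_{1}}^{\sigma}$; since $\tfrac{1}{\sigma p_{1}}=\tfrac1r-\tfrac sn$, Sobolev yields $\lesssim\|u\|_{\dot H^{s}_{r}}^{\sigma}+\|v\|_{\dot H^{s}_{r}}^{\sigma}$. For $\|g_{j}\|_{\dot H^{s}_{p_{2}}}$ I would use Minkowski to pull the $\theta$-integral outside and apply Lemma 2.3 to $G=\partial f$ at $w=v+\theta(u-v)$, writing $\tfrac{1}{p_{2}}=\tfrac{1}{q_{1}}+\tfrac{1}{q_{2}}$ with $q_{2}=r$ and $\tfrac{1}{q_{1}}=(\sigma-1)\bigl(\tfrac1r-\tfrac sn\bigr)\in[0,1)$; the hypothesis $\sigma\ge 1$ makes this exponent nonnegative (and permits $q_{1}=\infty$ when $\sigma=1$, which is why Lemma 2.3 is stated for $r_{1}\le\infty$). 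Then $\|G'(w)\|_{q_{1}}\lesssim\|w\|_{(\sigma-1)q_{1}}^{\sigma-1}\lesssim\|w\|_{\dot H^{s}_{r}}^{\sigma-1}$ by Sobolev with $\tfrac{1}{(\sigma-1)q_{1}}=\tfrac1r-\tfrac sn$, and $\|w\|_{\dot H^{s}_{r}}\lesssim\|u\|_{\dot H^{s}_{r}}+\|v\|_{\dot H^{s}_{r}}$.

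Combining these pieces and summing over $j=1,2$ produces the required inequality. The main obstacle, in my view, is bookkeeping: keeping track of which H\"older triple corresponds to which embedding and verifying that every exponent produced lies in the admissible range $(1,\infty)$ (or equals $\infty$ where the cited lemmas allow it), and in particular checking that the borderline $\sigma=1$ case, where $q_{1}=\infty$, is legitimately covered by the $L^{\infty}$ endpoint of the fractional chain rule.
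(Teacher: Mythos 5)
Your proposal is correct and follows essentially the same route as the paper's proof: the same first-order Taylor/integral representation of $f(u)-f(v)$, the same application of the fractional product rule with the identical exponent choices ($p_{1}$, $p_{2}$, and your $r_{2}$ is the paper's $a$), the same Minkowski-plus-fractional-chain-rule treatment of the Sobolev norm of the averaged derivative (including the $q_{1}=\infty$ endpoint when $\sigma=1$), and the same Sobolev embedding $\dot H^{s}_{r}\subset L^{a}$ to close the estimate. The only cosmetic difference is that you carry the $\partial_{z}$ and $\partial_{\bar z}$ terms explicitly, where the paper reduces without loss of generality to a function of a real variable.
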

\begin{proof}
We can see that
\[f(u)-f(v)=\int _{0}^{1}\left[f_{z} \left(v+t\left(u-v\right)\right)\left(u-v\right)+f_{\bar{z}} \left(v+t\left(u-v\right)\right)\left(\bar{u}-\bar{v}\right)\right]dt .\]
Without loss of generality and for simplicity, we assume that $f$ is a function of a real variable, i.e. we have
\[f(u)-f(v)=\left(u-v\right)\int _{0}^{1}f'\left(v+t\left(u-v\right)\right)dt .\]
Putting
\begin{equation} \label{GrindEQ__3_5_}
\frac{1}{p_{1} } =\frac{\sigma }{a} ,\, \frac{1}{p_{2} } =\frac{1}{p_{1} } +\frac{s}{n} ,\, \frac{1}{a} =\frac{1}{r} -\frac{s}{n},
\end{equation}
it follows from (3.4) and Lemma 2.1 (fractional product rule) that
\begin{equation}\nonumber
\left\| f(u)-f(v)\right\| _{\dot{H}_{p}^{s} }=\left\| \left(u-v\right)\int _{0}^{1}f'\left(v+t\left(u-v\right)\right)dt \right\| _{\dot{H}_{p}^{s} }\lesssim I_{1} +I_{2},
\end{equation}
where

$$
I_{1}=\left\| \int _{0}^{1}f'\left(v+t\left(u-v\right)\right)dt \right\| _{p_{1} } \left\| u-v\right\| _{\dot{H}_{r}^{s} },
$$

$$
I_{2}=\left\| \int _{0}^{1}f'\left(v+t\left(u-v\right)\right)dt \right\| _{\dot{H}_{p_{2} }^{s} } \left\| u-v\right\| _{a}.
$$
First, we estimate $I_{1}$. We have
\begin{equation} \label{GrindEQ__3_6_}
\left\| \int _{0}^{1}f'\left(v+t\left(u-v\right)\right)dt \right\| _{p_{1} } \le \int _{0}^{1}\left\| f'\left(v+t\left(u-v\right)\right)\right\| _{p_{1} } dt .
\end{equation}
It also follows from (3.3) that
\begin{equation} \label{GrindEQ__3_7_}
\left|f'\left(v+t\left(u-v\right)\right)\right|\lesssim{\mathop{\max }\limits_{t\in \left[0,\, 1\right]}} \left|v+t\left(u-v\right)\right|^{\sigma } \lesssim \left|u\right|^{\sigma } +\left|v\right|^{\sigma },
\end{equation}
for any $0\le t \le 1$. (3.5)--(3.7) imply that
\begin{eqnarray}\begin{split}\label{GrindEQ__3_8_}
\left\| \int _{0}^{1}f'\left(v+t\left(u-v\right)\right)dt \right\| _{p_{1} } &\lesssim\left\| \left|u\right|^{\sigma } +\left|v\right|^{\sigma } \right\| _{p_{1} } \le \left\| \left|u\right|^{\sigma } \right\| _{p_{1} } +\left\| \left|v\right|^{\sigma } \right\| _{p_{1} }  \\
&=\left\| u\right\| _{a}^{\sigma } +\left\| v\right\| _{a}^{\sigma } \;\lesssim\left\| u\right\| _{\dot{H}_{r}^{s} }^{\sigma } +\left\| v\right\| _{\dot{H}_{r}^{s} }^{\sigma } ,
\end{split}\end{eqnarray}
where the last inequality follows from the embedding $\dot{H}_{r}^{s} \subset L^{a} $. Hence we have
\begin{equation} \label{GrindEQ__3_9_}
I_{1} \lesssim\left(\left\| u\right\| _{\dot{H}_{r}^{s} }^{\sigma } +\left\| v\right\| _{\dot{H}_{r}^{s} }^{\sigma } \right)\left\| u-v\right\| _{\dot{H}_{r}^{s} } .
\end{equation}
Next, we estimate $I_{2}$. We have
\begin{equation} \label{GrindEQ__3_10_}
\left\| \int _{0}^{1}f'\left(v+t\left(u-v\right)\right)dt \right\| _{\dot{H}_{p_{2} }^{s} } \le \int _{0}^{1}\left\| f'\left(v+t\left(u-v\right)\right)\right\| _{\dot{H}_{p_{2} }^{s} } dt .
\end{equation}
On the other hand, it follows from Lemma 2.3 (fractional chain rule) and (3.3) that
\begin{eqnarray}\begin{split} \label{GrindEQ__3_11_}
\left\| f'\left(v+t\left(u-v\right)\right)\right\| _{\dot{H}_{p_{2} }^{s} } &\lesssim\left\| f''\left(v+t\left(u-v\right)\right)\right\| _{p_{3} } \left\| v+t\left(u-v\right)\right\| _{\dot{H}_{r}^{s} } \\
&\lesssim\left\| \left|v+t\left(u-v\right)\right|^{\sigma -1} \right\| _{p_{3} } \left\| v+t\left(u-v\right)\right\| _{\dot{H}_{r}^{s} } \\
&\lesssim\left\| v+t\left(u-v\right)\right\| _{\dot{H}_{r}^{s} }^{\sigma } ,
\end{split}\end{eqnarray}
where $\frac{1}{p_{3} } =\left(\sigma -1\right)\left(\frac{1}{r} -\frac{s}{n} \right)$, if $\sigma >1$ and $p_{3} =\infty $, if $\sigma =1$. We can also see that
\begin{equation} \label{GrindEQ__3_12_}
\left\| v+t\left(u-v\right)\right\| _{\dot{H}_{r}^{s} } \le {\mathop{\max }\limits_{t\in \left[0,\, 1\right]}} \left\| I^{s} v+t\left(I^{s} u-I^{s} v\right)\right\| _{L^{r} } \lesssim\left\| u\right\| _{\dot{H}_{r}^{s} } +\left\| v\right\| _{\dot{H}_{r}^{s} } .
\end{equation}
(3.10)--(3.12) imply that
\begin{equation} \label{GrindEQ__3_13_}
I_{2} =\left\| \int _{0}^{1}f'\left(v+t\left(u-v\right)\right)dt \right\| _{\dot{H}_{p_{2} }^{s} } \left\| u-v\right\| _{a} \lesssim\left(\left\| u\right\| _{\dot{H}_{r}^{s} }^{\sigma } +\left\| v\right\| _{\dot{H}_{r}^{s} }^{\sigma } \right)\left\| u-v\right\| _{\dot{H}_{r}^{s} } .
\end{equation}
In view of (3.9) and (3.13), we have
\begin{equation} \label{GrindEQ__3_14_}
\left\| f(u)-f(v)\right\| _{\dot{H}_{p}^{s} } \lesssim I_{1} +I_{2} \lesssim\left(\left\| u\right\| _{\dot{H}_{r}^{s} }^{\sigma } +\left\| v\right\| _{\dot{H}_{r}^{s} }^{\sigma } \right)\left\| u-v\right\| _{\dot{H}_{r}^{s} } ,
\end{equation}
this completes the proof.
\end{proof}

\begin{lem}\label{lem 3.2.}
Let $p>1$, $s\ge 1$ and $\sigma >\left\lceil s\right\rceil -1$. Assume that $f\in C^{\left\lceil s\right\rceil } \left(\C\to \C\right)$ satisfies
\begin{equation} \label{GrindEQ__3_15_}
\left|f^{\left(k\right)} \left(u\right)\right|\lesssim\left|u\right|^{\sigma +1-k} ,
\end{equation}
for any $0\le k\le \left\lceil s\right\rceil $ and $u\in \C$. Assume further
\begin{equation} \label{GrindEQ__3_16_}
\left|f^{\left(\left\lceil s\right\rceil\right)} \left(u\right)-f^{\left(\left\lceil s\right\rceil \right)} \left(v\right)\right|\lesssim\left|u-v\right|^{\min \{ \sigma -\left\lceil s\right\rceil +1,\;1\} } \left(\left|u\right|+\left|v\right|\right)^{\max \{ 0,\;\sigma -\left\lceil s\right\rceil \} } ,
\end{equation}
for any $u,\;v\in \C$. Suppose also that \textnormal{(3.4)} holds. Then we have
\begin{eqnarray}\begin{split} \label{GrindEQ__3_17_}
\left\| f(u)-f(v)\right\| _{\dot{H}_{p}^{s} }&\lesssim\left\| u-v\right\| _{a}^{\min \{ \sigma -\left\lceil s\right\rceil +1,\;1\} } \left(\left\| u\right\| _{\dot{H}_{r}^{s} }^{\max \{ \left\lceil s\right\rceil ,\;\sigma \} } +\left\| v\right\| _{\dot{H}_{r}^{s} }^{\max \{ \left\lceil s\right\rceil ,\;\sigma \} } \right)\\
&~~+\left(\left\| u\right\| _{\dot{H}_{r}^{s} }^{\sigma } +\left\| v\right\| _{\dot{H}_{r}^{s} }^{\sigma } \right)\left\| u-v\right\| _{\dot{H}_{r}^{s} } ,
\end{split}\end{eqnarray}
where $a=\frac{rn}{n-rs} $. Moreover, if $\sigma \ge \left\lceil s\right\rceil $, then we have
\begin{equation} \label{GrindEQ__3_18_}
\left\| f(u)-f(v)\right\| _{\dot{H}_{p}^{s} } \lesssim\left(\left\| u\right\| _{\dot{H}_{r}^{s} }^{\sigma } +\left\| v\right\| _{\dot{H}_{r}^{s} }^{\sigma } \right)\left\| u-v\right\| _{\dot{H}_{r}^{s} } .
\end{equation}
\end{lem}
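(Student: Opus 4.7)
The plan is to mirror the argument used in Lemma 3.1, but the classical chain rule must be replaced by a full Fa\`a di Bruno expansion (to handle the integer part of $s$) combined with the fractional product and chain rules (to handle the fractional part). By Lemma 2.7, it suffices to control $\sum_{|\alpha|=[s]}\|D^{\alpha}(f(u)-f(v))\|_{\dot{H}^{\nu}_{p}}$, where $\nu:=s-[s]\in[0,1)$. For each multi-index $\alpha$ with $|\alpha|=[s]$, Fa\`a di Bruno expands $D^{\alpha}f(w)$ as a finite linear combination of products of the form $f^{(k)}(w)\prod_{j=1}^{k}D^{\alpha^{j}}w$ with $1\le k\le[s]$, $\alpha=\alpha^{1}+\cdots+\alpha^{k}$, and $|\alpha^{j}|\ge 1$. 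I would then telescopically split each difference
\[
f^{(k)}(u)\prod_{j}D^{\alpha^{j}}u-f^{(k)}(v)\prod_{j}D^{\alpha^{j}}v
\]
into an ``$f^{(k)}$-difference'' term $(f^{(k)}(u)-f^{(k)}(v))\prod_{j}D^{\alpha^{j}}u$ together with $k$ ``derivative-difference'' terms of the form $f^{(k)}(v)(D^{\alpha^{j_{0}}}u-D^{\alpha^{j_{0}}}v)\prod_{j<j_{0}}D^{\alpha^{j}}u\prod_{j>j_{0}}D^{\alpha^{j}}v$.

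Next, I would take the $\dot{H}^{\nu}_{p}$ norm of each piece and apply Corollary 2.2 to place the fractional derivative on a single factor, choosing the remaining $L^{q}$ exponents so that the scaling condition (3.4) is respected and the Sobolev embedding $\dot{H}^{s}_{r}\subset L^{a}$ (valid by $\frac{1}{r}-\frac{s}{n}>0$ in (3.4) and Lemma 2.4) can be invoked on each factor involving $u$ or $v$. Whenever $\dot{H}^{\nu}$ lands on a factor $f^{(k)}(w)$ with $k<\lceil s\rceil$, the function $f^{(k)}$ is $C^{1}$ and Lemma 2.3 combined with (3.15) gives $\|f^{(k)}(w)\|_{\dot{H}^{\nu}_{q}}\lesssim\|w\|^{\sigma+1-k}_{\dot{H}^{s}_{r}}$; whenever it lands on a derivative factor, Lemma 2.7 converts it into a factor of $\|w\|_{\dot{H}^{s}_{r}}$. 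For $k<[s]$, the pointwise difference $f^{(k)}(u)-f^{(k)}(v)$ is controlled by the mean value theorem using the bound on $f^{(k+1)}$ from (3.15), producing a factor $\|u-v\|_{a}(\|u\|_{\dot{H}^{s}_{r}}^{\sigma-k}+\|v\|_{\dot{H}^{s}_{r}}^{\sigma-k})$. These contributions, together with the derivative-difference terms, are expected to assemble into the second summand $(\|u\|_{\dot{H}^{s}_{r}}^{\sigma}+\|v\|_{\dot{H}^{s}_{r}}^{\sigma})\|u-v\|_{\dot{H}^{s}_{r}}$ of (3.17).

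The main obstacle is the top-order term $k=[s]$, where $f^{([s])}(u)-f^{([s])}(v)$ must be controlled. When $s$ is an integer, $[s]=\lceil s\rceil$ and $\nu=0$, so the H\"older-type estimate (3.16) combined with H\"older's inequality in $L^{p}$ and the Sobolev embedding immediately produces the first summand of (3.17). When $s$ is not an integer, $\nu\in(0,1)$ and we must estimate the fractional derivative of $f^{([s])}(u)-f^{([s])}(v)$; writing $f^{([s])}(u)-f^{([s])}(v)=(u-v)\int_{0}^{1}f^{(\lceil s\rceil)}(v+t(u-v))\,dt$, Lemma 2.1 distributes the $\dot{H}^{\nu}$ derivative between $u-v$ and the integral, and the integral factor in fractional Sobolev norm is estimated using a H\"older-type fractional chain rule whose invocation of (3.16) is precisely what produces the exponent $\min\{\sigma-\lceil s\rceil+1,1\}$; this is the technically delicate step. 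Finally, when $\sigma\ge\lceil s\rceil$ the two exponents simplify to $\min\{\sigma-\lceil s\rceil+1,1\}=1$ and $\max\{\lceil s\rceil,\sigma\}=\sigma$, so the first summand collapses to $\|u-v\|_{a}(\|u\|_{\dot{H}^{s}_{r}}^{\sigma}+\|v\|_{\dot{H}^{s}_{r}}^{\sigma})$, which is absorbed into the second summand by $\dot{H}^{s}_{r}\subset L^{a}$, yielding (3.18).
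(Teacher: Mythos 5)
Your overall architecture matches the paper's: reduce via Lemma 2.7 to $\sum_{|\alpha|=[s]}\|D^{\alpha}f(u)-D^{\alpha}f(v)\|_{\dot{H}^{\nu}_{p}}$, expand $D^{\alpha}f(w)$ by Leibniz/Fa\`a di Bruno, split each difference into the ``$f^{(q)}$-difference'' piece and the telescoped ``derivative-difference'' pieces, and close with H\"older, Corollary 2.2 and the embeddings $\dot H^{s}_{r}\subset L^{a}$, $\dot H^{s}_{r}\subset\dot H^{|\alpha_i|}_{a_i}$. The integer-$s$ case and all the lower-order terms ($q<[s]$, and the $II_{1}$-type terms) are handled exactly as in the paper. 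The problem is the one step you yourself flag as ``technically delicate'': the top-order term $q=[s]$ when $s\notin\N$, where the fractional derivative $\dot H^{\nu}$ falls on $f^{([s])}(u)-f^{([s])}(v)$. You propose to factor $f^{([s])}(u)-f^{([s])}(v)=(u-v)\int_{0}^{1}f^{(\lceil s\rceil)}(v+t(u-v))\,dt$, distribute $\dot H^{\nu}$ by Lemma 2.1, and then estimate $\bigl\|\int_{0}^{1}f^{(\lceil s\rceil)}(v+t(u-v))\,dt\bigr\|_{\dot H^{\nu}_{q}}$ by ``a H\"older-type fractional chain rule.'' No such tool exists in this paper: Lemma 2.3 requires $G\in C^{1}$, while $f^{(\lceil s\rceil)}$ is only H\"older continuous of order $\min\{\sigma-\lceil s\rceil+1,1\}$ (which is strictly less than $1$ precisely in the case $\lceil s\rceil-1<\sigma<\lceil s\rceil$ that makes (3.17) different from (3.18)). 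A Moser/Visan-type chain rule for H\"older-continuous $G$ does exist in the literature, but it costs a Sobolev norm of order $\nu/\min\{\sigma-\lceil s\rceil+1,1\}>\nu$ on the inner function and produces a different exponent bookkeeping; you would still have to verify that the output assembles into the first summand of (3.17), and in particular your factorization puts the full power $1$ on $u-v$ rather than the exponent $\min\{\sigma-\lceil s\rceil+1,1\}$ that (3.16) is designed to deliver. As written, the key estimate is asserted, not proved.

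The paper closes this step by a different device that avoids any chain rule for $f^{(\lceil s\rceil)}$: since $\nu<1$, one can choose $p_{6}$ with $\frac{1}{p_{6}}=\frac{\sigma+1-q}{a}+\frac{1}{n}$ so that $\dot H^{1}_{p_{6}}\subset\dot H^{\nu}_{p_{4}}$, upgrade the fractional norm to a genuine first-order derivative, compute $\partial_{x_{i}}\bigl(f^{(q)}(u)-f^{(q)}(v)\bigr)=f^{(q+1)}(u)\partial_{x_{i}}u-f^{(q+1)}(v)\partial_{x_{i}}v$, split once more, and only then apply the pointwise hypothesis (3.16) to $f^{(q+1)}=f^{(\lceil s\rceil)}$ in Lebesgue norms via H\"older (see (3.37)--(3.43) of the paper). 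This is where the exponent $\min\{\sigma-\lceil s\rceil+1,1\}$ on $\|u-v\|_{a}$ and the power $\max\{\lceil s\rceil,\sigma\}$ actually come from. If you replace your chain-rule step by this embedding-plus-differentiation argument, the rest of your proof goes through and coincides with the paper's.
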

\begin{proof}
By Lemma 2.7, we have
\begin{equation} \label{GrindEQ__3_19_}
\left\| f(u)-f(v)\right\| _{\dot{H}_{p}^{s} } \lesssim\sum _{\left|\alpha \right|=\left[s\right]}\left\| D^{\alpha } f(u)-D^{\alpha } f(v)\right\| _{\dot{H}_{p}^{v} }  ,
\end{equation}
where $v=s-\left[s\right]$. Without loss of generality and for simplicity, we assume that $f$ is a function of a real variable. It follows from the Leibniz rule of derivatives that
\[D^{\alpha } f(u)=\sum _{q=1}^{\left|\alpha \right|}\sum _{\Lambda _{\alpha }^{q} }C_{\alpha ,\;q} f^{\left(q\right)} \left(u\right)\prod _{i=1}^{q}D^{\alpha _{i} } u   \]
where $\Lambda _{\alpha }^{q} =\left(\alpha _{1} +\cdots +\alpha _{q} =\alpha ,\;\left|\alpha _{i} \right|\ge 1\right)$. Thus we have
\begin{eqnarray}\begin{split} \label{GrindEQ__3_20_}
\left\| D^{\alpha } f(u)-D^{\alpha } f(v)\right\| _{\dot{H}_{p}^{v} } &=\left\| \sum _{q=1}^{\left|\alpha \right|}\sum _{\Lambda _{\alpha }^{q} }C_{\alpha ,\;q} \left(f^{\left(q\right)} \left(u\right)\prod _{i=1}^{q}D^{\alpha _{i} } u -f^{\left(q\right)} \left(v\right)\prod _{i=1}^{q}D^{\alpha _{i} } v \right)  \right\| _{\dot{H}_{p}^{v} } \\
&\le \sum _{q=1}^{\left|\alpha \right|}\sum _{\Lambda _{\alpha }^{q} }C_{\alpha ,\;q} \left\| f^{\left(q\right)} \left(u\right)\prod _{i=1}^{q}D^{\alpha _{i} } u -f^{\left(q\right)} \left(v\right)\prod _{i=1}^{q}D^{\alpha _{i} } v \right\| _{\dot{H}_{p}^{v} }    \\
&\le \sum _{q=1}^{\left|\alpha \right|}\sum _{\Lambda _{\alpha }^{q} }C_{\alpha ,\;q} \left(\left\| I_{1} \right\| _{\dot{H}_{p}^{v} } +\left\| II_{1} \right\| _{\dot{H}_{p}^{v} } \right) ,
\end{split}\end{eqnarray}
where
\begin{equation} \label{GrindEQ__3_21_}
I_{1} =\left(f^{\left(q\right)} \left(u\right)-f^{\left(q\right)} \left(v\right)\right)\prod _{i=1}^{q}D^{\alpha _{i} } u ,~II_{1} =f^{\left(q\right)} \left(v\right)\left(\prod _{i=1}^{q}D^{\alpha _{i} } u -\prod _{i=1}^{q}D^{\alpha _{i} } v \right).
\end{equation}
On the other hand, it follows from (3.4) that
\begin{equation} \label{GrindEQ__3_22_}
\frac{1}{p} =\frac{\sigma +1-q}{a} +\sum _{i=1}^{q}\frac{1}{a_{i} }  +\frac{v}{n} .
\end{equation}
where
\begin{equation} \label{GrindEQ__3_23_}
\frac{1}{a} =\frac{1}{r} -\frac{s}{n} ,~\frac{1}{a_{i} } =\frac{1}{r} -\frac{s-\left|\alpha _{i} \right|}{n} .
\end{equation}
Using (3.23) and Lemma 2.4, we have the embeddings: $\dot{H}_{r}^{s} \subset L^{a} $ and $\dot{H}_{r}^{s} \subset \dot{H}_{a_{i} }^{\left|\alpha _{i} \right|} $.

We divide the study in two cases: $s\in \N$ and $s\notin \N$.

\textbf{Case 1.} $s\in \N$, i.e. $v=0$.

\emph{Step 1.1}. First, we estimate $\left\| I_{1} \right\| _{p} $, where $I_{1}$ is given in (3.21). Using H\"{o}lder inequality and (3.22), we have
\begin{eqnarray}\begin{split} \label{GrindEQ__3_24_}
\left\| I_{1} \right\| _{p} &=\left\| \left(f^{\left(q\right)} \left(u\right)-f^{\left(q\right)} \left(v\right)\right)\prod _{i=1}^{q}D^{\alpha _{i} } u \right\| _{p} \le \left\| f^{\left(q\right)} \left(u\right)-f^{\left(q\right)} \left(v\right)\right\| _{p_{5} } \prod _{i=1}^{q}\left\| D^{\alpha _{i} } u\right\| _{a_{i} }  \\
&\lesssim \left\| f^{\left(q\right)} \left(u\right)-f^{\left(q\right)} \left(v\right)\right\| _{p_{5} } \prod _{i=1}^{q}\left\| u\right\| _{\dot{H}_{a_{i} }^{\left|\alpha _{i} \right|} }  \lesssim \left\| f^{\left(q\right)} \left(u\right)-f^{\left(q\right)} \left(v\right)\right\| _{p_{5} } \left\| u\right\| _{\dot{H}_{r}^{s} }^{q},
\end{split}\end{eqnarray}
where $\frac{1}{p_{5} } =\frac{\sigma +1-q}{a} $ and the last inequality follows from the embedding $\dot{H}_{r}^{s} \subset \dot{H}_{a_{i} }^{\left|\alpha _{i} \right|} $.

$\cdot$ If $q=\left|\alpha \right|=\left\lceil s\right\rceil $, then it follows from (3.16) and H\"{o}lder inequality that
\begin{eqnarray}\begin{split} \label{GrindEQ__3_25_}
\left\| f^{\left(\left\lceil s\right\rceil \right)} \left(u\right)-f^{\left(\left\lceil s\right\rceil \right)} \left(v\right)\right\| _{p_{5} } &\lesssim\left\| \left|u-v\right|^{\min \{ \sigma -\left\lceil s\right\rceil +1,\;1\} } \left(\left|u\right|+\left|v\right|\right)^{\max \{ 0,\;\sigma -\left\lceil s\right\rceil \} } \right\| _{p_{5} }  \\
&\lesssim\left\| u-v\right\| _{a}^{\min \{ \sigma -\left\lceil s\right\rceil +1,\;1\} } \left\| \left|u\right|+\left|v\right|\right\| _{a}^{\max \{ 0,\;\sigma -\left\lceil s\right\rceil \} }  \\
&\lesssim\left\| u-v\right\| _{a}^{\min \{ \sigma -\left\lceil s\right\rceil +1,\;1\} } \left(\left\| u\right\| _{\dot{H}_{r}^{s} }^{\max \{ 0,\;\sigma -\left\lceil s\right\rceil \} } +\left\| v\right\| _{\dot{H}_{r}^{s} }^{\max \{ 0,\;\sigma -\left\lceil s\right\rceil \} } \right).
\end{split}\end{eqnarray}
In view of (3.24) and (3.25), we have
\begin{equation} \label{GrindEQ__3_26_}
\left\| I_{1} \right\| _{p} \lesssim\left\| u-v\right\| _{a}^{\min \{ \sigma -\left\lceil s\right\rceil +1,\;1\} } \left(\left\| u\right\| _{\dot{H}_{r}^{s} }^{\max \{ \left\lceil s\right\rceil ,\;\sigma \} } +\left\| v\right\| _{\dot{H}_{r}^{s} }^{\max \{ \left\lceil s\right\rceil ,\;\sigma \} } \right).
\end{equation}

$\cdot$ If $q<\left|\alpha \right|=\left\lceil s\right\rceil $, then we have
\begin{eqnarray}\begin{split} \label{GrindEQ__3_27_}
\left\| f^{\left(q\right)} \left(u\right)-f^{\left(q\right)} \left(v\right)\right\| _{p_{5} }&=\left\| \left(u-v\right)\int _{0}^{1}f^{\left(q+1\right)} \left(v+t\left(u-v\right)\right)dt \right\| _{p_{5}}\\
&\lesssim \left\| \left(u-v\right)\int _{0}^{1}\left|v+t\left(u-v\right)\right|^{\sigma -q} dt \right\| _{p_{5}}\\
&\lesssim\left\| u-v\right\| _{a} \left\| \left|u\right|+\left|v\right|\right\| _{a}^{\sigma -q} \lesssim\left\| u-v\right\| _{\dot{H}_{r}^{s} } \left(\left\| u\right\| _{\dot{H}_{r}^{s} }^{\sigma -q} +\left\| v\right\| _{\dot{H}_{r}^{s} }^{\sigma -q} \right).
\end{split}\end{eqnarray}
In view of (3.24) and (3.27), we have
\begin{equation} \label{GrindEQ__3_28_}
\left\| I_{1} \right\| _{p} \lesssim\left(\left\| u\right\| _{\dot{H}_{r}^{s} }^{\sigma } +\left\| v\right\| _{\dot{H}_{r}^{s} }^{\sigma } \right)\left\| u-v\right\| _{\dot{H}_{r}^{s} } .
\end{equation}
Hence we have
\begin{eqnarray}\begin{split} \label{GrindEQ__3_29_}
\left\| I_{1} \right\| _{p} \lesssim&\left\| u-v\right\| _{a}^{\min \{ \sigma -\left\lceil s\right\rceil +1,\;1\} } \left(\left\| u\right\| _{\dot{H}_{r}^{s} }^{\max \{ \left\lceil s\right\rceil ,\;\sigma \} } +\left\| v\right\| _{\dot{H}_{r}^{s} }^{\max \{ \left\lceil s\right\rceil ,\;\sigma \} } \right)\\
&+\left(\left\| u\right\| _{\dot{H}_{r}^{s} }^{\sigma } +\left\| v\right\| _{\dot{H}_{r}^{s} }^{\sigma } \right)\left\| u-v\right\| _{\dot{H}_{r}^{s} } ,
\end{split}\end{eqnarray}
for any $1\le q\le s=\left\lceil s\right\rceil $.

\emph{Step 1.2.} Next, we estimate $\left\| II_{1} \right\| _{p}$, where $II_{1}$ is given in (3.21). Notice that
\begin{equation} \label{GrindEQ__3_30_}
\prod _{i=1}^{N}a_{i}  -\prod _{i=1}^{N}b_{i}  =\sum _{i=1}^{N}\prod _{j=1}^{i-1}a_{j}  \prod _{j=i+1}^{N}b_{j}  \left(a_{i} -b_{i} \right) ,
\end{equation}
where we assume that $\prod _{j=1}^{0}a_{i}  =\prod _{j=N+1}^{N}b_{i}  =0$. Hence we have
\begin{eqnarray}\begin{split} \label{GrindEQ__3_31_}
\left\| II_{1} \right\| _{p} &=\left\| f^{\left(q\right)} \left(v\right)\left(\prod _{i=1}^{q}D^{\alpha _{i} } u -\prod _{i=1}^{q}D^{\alpha _{i} } v \right)\right\| _{p}\\
&\le \sum _{i=1}^{q}\left\| f^{\left(q\right)} \left(v\right)\left(\prod _{j=1}^{i-1}D^{\alpha _{j} } u \prod _{j=i+1}^{q}D^{\alpha _{j} } v \left(D^{\alpha _{i} } u-D^{\alpha _{i} } v\right)\right)\right\| _{p}  .
\end{split}\end{eqnarray}
It follows from (3.15), (3.22), (3.23) and H\"{o}lder inequality that
\begin{eqnarray}\begin{split} \label{GrindEQ__3_32_}
&\left\| f^{\left(q\right)} \left(v\right)\left(\prod _{j=1}^{i-1}D^{\alpha _{j} } u \prod _{j=i+1}^{q}D^{\alpha _{j} } v \left(D^{\alpha _{i} } u-D^{\alpha _{i} } v\right)\right)\right\| _{p}\\
&~~~~~~~~~~~\lesssim \left\| u\right\| _{a}^{\sigma +1-q} \left\| u-v\right\| _{\dot{H}_{a_{i} }^{\left|\alpha _{i} \right|} } \prod _{j=1}^{i-1}\left\| u\right\| _{\dot{H}_{a_{j} }^{\left|\alpha _{j} \right|} }  \prod _{j=i+1}^{q}\left\| v\right\| _{\dot{H}_{a_{j} }^{\left|\alpha _{j} \right|} }\\
&~~~~~~~~~~~\lesssim\left(\left\| u\right\| _{\dot{H}_{r}^{s} }^{\sigma } +\left\| v\right\| _{\dot{H}_{r}^{s} }^{\sigma } \right)\left\| u-v\right\| _{\dot{H}_{r}^{s} } .
\end{split}\end{eqnarray}
In view of (3.31) and (3.32), we have
\begin{equation} \label{GrindEQ__3_33_}
\left\| II_{1} \right\| _{p} \lesssim\left(\left\| u\right\| _{\dot{H}_{r}^{s} }^{\sigma } +\left\| v\right\| _{\dot{H}_{r}^{s} }^{\sigma } \right)\left\| u-v\right\| _{\dot{H}_{r}^{s} } .
\end{equation}
It follows from (3.20), (3.29) and (3.33) that
\begin{eqnarray}\begin{split}\nonumber
\left\| D^{\alpha } f(u)-D^{\alpha } f(v)\right\| _{\dot{H}_{p}^{v} } &\lesssim\left\| u-v\right\| _{a}^{\min \{ \sigma -\left\lceil s\right\rceil +1,\;1\} } \left(\left\| u\right\| _{\dot{H}_{r}^{s} }^{\max \{ \left\lceil s\right\rceil ,\;\sigma \} } +\left\| v\right\| _{\dot{H}_{r}^{s} }^{\max \{ \left\lceil s\right\rceil ,\;\sigma \} } \right)\\
&~~+\left(\left\| u\right\| _{\dot{H}_{r}^{s} }^{\sigma } +\left\| v\right\| _{\dot{H}_{r}^{s} }^{\sigma } \right)\left\| u-v\right\| _{\dot{H}_{r}^{s} } ,
\end{split}\end{eqnarray}
this completes the proof of (3.17) in the case $s\in \N$.

\textbf{Case 2.} $s\notin \N$.

\emph{Step 2.1.} First, we estimate $\left\| I_{1} \right\| _{\dot{H}_{p}^{v} } $, where $I_{1}$ is given in (3.21).

It follows from (3.22) and Lemma 2.1 (fractional product rule) that
\begin{eqnarray}\begin{split} \label{GrindEQ__3_34_}
\left\| I_{1} \right\| _{\dot{H}_{p}^{v} } &\lesssim\left\| f^{\left(q\right)} \left(u\right)-f^{\left(q\right)} \left(v\right)\right\| _{\dot{H}_{p_{4} }^{v} } \left\| \prod _{i=1}^{q}D^{\alpha _{i} } u \right\| _{r_{4} } +\left\| f^{\left(q\right)} \left(u\right)-f^{\left(q\right)} \left(v\right)\right\| _{p_{5} } \left\| \prod _{i=1}^{q}D^{\alpha _{i} } u \right\| _{\dot{H}_{r_{5} }^{v} } \\
&\equiv I_{2} +II_{2} ,
\end{split}\end{eqnarray}
where
\begin{equation} \label{GrindEQ__3_35_}
\frac{1}{p_{4} } =\frac{\sigma +1-q}{a} +\frac{v}{n},~\frac{1}{r_{4} } =\sum _{i=1}^{q}\frac{1}{a_{i}},~\frac{1}{p_{5} } =\frac{\sigma +1-q}{a},~\frac{1}{r_{5}} =\sum _{i=1}^{q}\frac{1}{a_{i}}+\frac{v}{n} .
\end{equation}
First, we estimate
\[I_{2} =\left\| f^{\left(q\right)} \left(u\right)-f^{\left(q\right)} \left(v\right)\right\| _{\dot{H}_{p_{4} }^{v} } \left\| \prod _{i=1}^{q}D^{\alpha _{i} } u \right\| _{r_{4} } .\]
Using the embedding $\dot{H}_{r}^{s} \subset \dot{H}_{a_{i} }^{\left|\alpha _{i} \right|} $, we can easily see that
\begin{equation} \label{GrindEQ__3_36_}
\left\| \prod _{i=1}^{q}D^{\alpha _{i} } u \right\| _{r_{4} } \lesssim \left\| u\right\| _{\dot{H}_{r}^{s} }^{q} .
\end{equation}
Putting $\frac{1}{p_{6} } =\frac{\sigma +1-q}{a} +\frac{1}{n} $, we can see that
\[0<\frac{1}{p_{6} } \le \sigma \left(\frac{1}{r} -\frac{s}{n} \right)+\frac{1}{n} \le \sigma \left(\frac{1}{r} -\frac{s}{n} \right)+\frac{s}{n} <\sigma \left(\frac{1}{r} -\frac{s}{n} \right)+\frac{1}{r} =\frac{1}{p_{1} } <1,\]
which implies that $1<p_{6} <\infty $ and $\dot{H}_{p_{6} }^{1} \subset \dot{H}_{p_{4} }^{v} $. Thus we have
\begin{eqnarray}\begin{split} \label{GrindEQ__3_37_}
&\left\| f^{\left(q\right)} \left(u\right)-f^{\left(q\right)} \left(v\right)\right\| _{\dot{H}_{p_{4} }^{v} } \lesssim\left\| f^{\left(q\right)} \left(u\right)-f^{\left(q\right)} \left(v\right)\right\| _{\dot{H}_{p_{6} }^{1} }\\
&~~~~~~=\sum _{i=1}^{n}\left\| \partial _{x_{i} } \left(f^{\left(q\right)} \left(u\right)-f^{\left(q\right)} \left(v\right)\right)\right\| _{p_{6} }  \\
&~~~~~~=\sum _{i=1}^{n}\left\| f^{\left(q+1\right)} \left(u\right)\partial _{x_{i} } u-f^{\left(q+1\right)} \left(v\right)\partial _{x_{i} } v\right\| _{p_{6} }   \\
&~~~~~~\le \sum _{i=1}^{n}\left\| \left(f^{\left(q+1\right)} \left(u\right)-f^{\left(q+1\right)} \left(v\right)\right)\partial _{x_{i} } u\right\| _{p_{6} } +\left\| f^{\left(q+1\right)} \left(v\right)\left(\partial _{x_{i} } u-\partial _{x_{i} } v\right)\right\| _{p_{6} }   \\
&~~~~~~\lesssim\sum _{i=1}^{n}\left\| f^{\left(q+1\right)} \left(u\right)-f^{\left(q+1\right)} \left(v\right)\right\| _{p_{7} } \left\| \partial _{x_{i} } u\right\| _{r_{7} } +\left\| f^{\left(q+1\right)} \left(v\right)\right\| _{p_{7} } \left(\left\| \partial _{x_{i} } u-\partial _{x_{i} } v\right\| _{r_{7} } \right),
\end{split}\end{eqnarray}
where
\begin{equation} \label{GrindEQ__3_38_}
\frac{1}{p_{7} } =\frac{\sigma -q}{a} ,~\frac{1}{r_{7} } =\frac{1}{r} -\frac{s-1}{n} .
\end{equation}
Using the embedding $\dot{H}_{r_{7} }^{1} \subset \dot{H}_{r}^{s} $, we have
\begin{equation} \label{GrindEQ__3_39_}
\left\| \partial _{x_{i} } u\right\| _{r_{7} } \lesssim\left\| u\right\| _{\dot{H}_{r_{7} }^{1} } \lesssim\left\| u\right\| _{\dot{H}_{r}^{s} } ,~ \left\| \partial _{x_{i} } u-\partial _{x_{i} } v\right\| _{r_{7} } \lesssim\left\| u-v\right\| _{\dot{H}_{r_{7} }^{1} } \lesssim\left\| u-v\right\| _{\dot{H}_{r}^{s} } .
\end{equation}
We can also easily see that
\begin{equation} \label{GrindEQ__3_40_}
\left\| f^{\left(q+1\right)} \left(v\right)\right\| _{p_{7} } \lesssim \left\| v\right\| _{\dot{H}_{r}^{s} }^{\sigma -q} .
\end{equation}

$\cdot$ If $q=\left[s\right]=\left\lceil s\right\rceil -1$, it follows from (3.16) that
\begin{equation} \label{GrindEQ__3_41_}
\left\| f^{\left(\left\lceil s\right\rceil \right)} \left(u\right)-f^{\left(\left\lceil s\right\rceil \right)} \left(v\right)\right\| _{p_{7} } \lesssim\left\| u-v\right\| _{a}^{\min \{ \sigma -\left\lceil s\right\rceil +1,\;1\} } \left(\left\| u\right\| _{\dot{H}_{r}^{s} }^{\max \{ 0,\;\sigma -\left\lceil s\right\rceil \} } +\left\| v\right\| _{\dot{H}_{r}^{s} }^{\max \{ 0,\;\sigma -\left\lceil s\right\rceil \} } \right).
\end{equation}
In view of (3.37), (3.39), (3.40) and (3.41), we have
\begin{eqnarray}\begin{split} \label{GrindEQ__3_42_}
\left\| f^{\left(\left[s\right]\right)} \left(u\right)-f^{\left(\left[s\right]\right)} \left(v\right)\right\| _{\dot{H}_{p_{4} }^{v} } &\lesssim\left\| u-v\right\| _{a}^{\min \{ \sigma -\left\lceil s\right\rceil +1,\;1\} } \left(\left\| u\right\| _{\dot{H}_{r}^{s} }^{\max \{ 1,\;\sigma -\left[s\right]\} } +\left\| v\right\| _{\dot{H}_{r}^{s} }^{\max \{ 1,\;\sigma -\left[s\right]\} } \right) \\
&~~+\left(\left\| u\right\| _{\dot{H}_{r}^{s} }^{\sigma -\left[s\right]} +\left\| v\right\| _{\dot{H}_{r}^{s} }^{\sigma -\left[s\right]} \right)\left\| u-v\right\| _{\dot{H}_{r}^{s} } .
\end{split}\end{eqnarray}
It follows from (3.36) and (3.42) that
\begin{eqnarray}\begin{split} \label{GrindEQ__3_43_}
I_{2} &=\left\| f^{\left(\left[s\right]\right)} \left(u\right)-f^{\left(\left[s\right]\right)} \left(v\right)\right\| _{\dot{H}_{p_{4} }^{v} } \left\| \prod _{i=1}^{\left[s\right]}D^{\alpha _{i} } u \right\| _{r_{4} }  \\
&\lesssim\left\| u-v\right\| _{a}^{\min \{ \sigma -\left\lceil s\right\rceil +1,\;1\} } \left(\left\| u\right\| _{\dot{H}_{r}^{s} }^{\max \{ \left\lceil s\right\rceil ,\;\sigma \} } +\left\| v\right\| _{\dot{H}_{r}^{s} }^{\max \{ \left\lceil s\right\rceil ,\;\sigma \} } \right)+\left(\left\| u\right\| _{\dot{H}_{r}^{s} }^{\sigma }+\left\| v\right\| _{\dot{H}_{r}^{s} }^{\sigma } \right)\left\| u-v\right\| _{\dot{H}_{r}^{s} }
\end{split}\end{eqnarray}

$\cdot$ If $q<\left[s\right]=\left\lceil s\right\rceil -1$, it follows from (3.15) that
\begin{eqnarray}\begin{split} \label{GrindEQ__3_44_}
\left\| f^{\left(q+1\right)} \left(u\right)-f^{\left(q+1\right)} \left(v\right)\right\| _{p_{7} } &=\left\| \left(u-v\right)\int _{0}^{1}f^{\left(q+2\right)} \left(v+t\left(u-v\right)\right)dt \right\| _{p_{7} }\\
&\lesssim \left\| \left(u-v\right)\int _{0}^{1}\left|v+t\left(u-v\right)\right|^{\sigma -q-1} dt \right\| _{p_{7} }\\
&\lesssim\left\| u-v\right\| _{a} \left\| \left|u\right|+\left|v\right|\right\| _{a}^{\sigma -q-1}\\
&\lesssim\left\| u-v\right\| _{\dot{H}_{r}^{s} } \left(\left\| u\right\| _{\dot{H}_{r}^{s} }^{\sigma -q-1} +\left\| v\right\| _{\dot{H}_{r}^{s} }^{\sigma -q-1} \right).
\end{split}\end{eqnarray}
In view of (3.37), (3.39), (3.40) and (3.44), we have
\begin{equation} \label{GrindEQ__3_45_}
\left\| f^{\left(q\right)} \left(u\right)-f^{\left(q\right)} \left(v\right)\right\| _{\dot{H}_{p_{4} }^{v} } \lesssim\left(\left\| u\right\| _{\dot{H}_{r}^{s} }^{\sigma -q} +\left\| v\right\| _{\dot{H}_{r}^{s} }^{\sigma -q} \right)\left\| u-v\right\| _{\dot{H}_{r}^{s} } .
\end{equation}
(3.36) and (3.45) yield that
\begin{equation} \label{GrindEQ__3_46_}
I_{2} \lesssim \left(\left\| u\right\| _{\dot{H}_{r}^{s} }^{\sigma } +\left\| v\right\| _{\dot{H}_{r}^{s} }^{\sigma } \right)\left\| u-v\right\| _{\dot{H}_{r}^{s} } .
\end{equation}
Thus, for any $1\le q\le \left[s\right]$, we have
\begin{eqnarray}\begin{split} \label{GrindEQ__3_47_}
I_{2} \lesssim &\left\| u-v\right\| _{a}^{\min \{ \sigma -\left\lceil s\right\rceil +1,\;1\} } \left(\left\| u\right\| _{\dot{H}_{r}^{s} }^{\max \{ \left\lceil s\right\rceil ,\;\sigma \} } +\left\| v\right\| _{\dot{H}_{r}^{s} }^{\max \{ \left\lceil s\right\rceil ,\;\sigma \} } \right)+\left(\left\| u\right\| _{\dot{H}_{r}^{s} }^{\sigma }+\left\| v\right\| _{\dot{H}_{r}^{s} }^{\sigma } \right)\left\| u-v\right\| _{\dot{H}_{r}^{s}}.
\end{split}\end{eqnarray}
Next, we estimate
\[II_{2} =\left\| f^{\left(q\right)} \left(u\right)-f^{\left(q\right)} \left(v\right)\right\| _{p_{5} } \left\| \prod _{i=1}^{q}D^{\alpha _{i} } u \right\| _{\dot{H}_{r_{5} }^{v} } ,\]
where $p_{5}$ and $r_{5}$ are given in (3.35).
If $q=1$, we can see that $\left|\alpha _{1} \right|=\left[s\right]$ and $r_{5} =r$. Hence, we have
\[\left\| D^{\alpha _{1} } u\right\| _{\dot{H}_{r_{5} }^{v} } \lesssim\left\| u\right\| _{\dot{H}_{r}^{s} } .\]
We consider the case $q>1$. For $1\le k\le q$, putting $\frac{1}{\tilde{a}_{k} } :=\frac{1}{a_{k} } +\frac{v}{n} $, it follows from (3.23) and (3.35) that
\begin{equation} \label{GrindEQ__3_48_}
\frac{1}{\tilde{a}_{k} } =\frac{1}{r} -\frac{s-\left|\alpha _{k} \right|-v}{n} , ~\frac{1}{r_{5} } =\sum _{i\in I_{k} }\frac{1}{a_{i} }  +\frac{1}{\tilde{a}_{k} } ,
\end{equation}
where $I_{k} =\left\{i\in \N:\;1\le i\le q,\;i\ne k\right\}$. We can see that $\tilde{a}_{k} >r>1$ and $\dot{H}_{r}^{s} \subset \dot{H}_{\tilde{a}_{k} }^{\left|\alpha _{k} \right|+v} $, since $s>\left|\alpha _{k} \right|+v$. Hence, using Corollary 2.2 and (3.48), we have
\begin{eqnarray}\begin{split} \label{GrindEQ__3_49_}
\left\| \prod _{i=1}^{q}D^{\alpha _{i} } u \right\| _{\dot{H}_{r_{5} }^{v} } &\lesssim \sum _{k=1}^{q}\left(\left\| D^{\alpha _{k} } u\right\| _{\dot{H}_{\tilde{a}_{k} }^{v} } \prod _{i\in I_{k} }\left\| D^{\alpha _{i} } u_{i} \right\| _{a_{i} }  \right)  \\
&\lesssim\sum _{k=1}^{q}\left(\left\| u\right\| _{\dot{H}_{\tilde{a}_{k} }^{\left|\alpha _{k} \right|+v} } \prod _{i\in I_{k} }\left\| u\right\| _{\dot{H}_{a_{i} }^{\left|\alpha _{i} \right|} } \right) \lesssim\left\| u\right\| _{\dot{H}_{r}^{s} }^{q} .
\end{split}\end{eqnarray}
Thus, for any $1\le q\le \left[s\right]$, we have
\begin{equation} \label{GrindEQ__3_50_}
\left\| \prod _{i=1}^{q}D^{\alpha _{i} } u \right\| _{\dot{H}_{r_{5} }^{v} } \lesssim\left\| u\right\| _{\dot{H}_{r}^{s} }^{q} .
\end{equation}
Since $q\le \left[s\right]<\left\lceil s\right\rceil $, it follows from (3.15) that
\begin{eqnarray}\begin{split} \label{GrindEQ__3_51_}
\left\| f^{\left(q\right)} \left(u\right)-f^{\left(q\right)} \left(v\right)\right\| _{p_{5} } &=\left\| \left(u-v\right)\int _{0}^{1}f^{\left(q+1\right)} \left(v+t\left(u-v\right)\right)dt \right\| _{p_{5} }\\
&\lesssim \left\| \left(u-v\right)\int _{0}^{1}\left|v+t\left(u-v\right)\right|^{\sigma -q} dt \right\| _{p_{5} }  \\
&\lesssim\left\| u-v\right\| _{a} \left\| \left|u\right|+\left|v\right|\right\| _{a}^{\sigma -q} \\
&\lesssim\left\| u-v\right\| _{\dot{H}_{r}^{s} } \left(\left\| u\right\| _{\dot{H}_{r}^{s} }^{\sigma -q} +\left\| v\right\| _{\dot{H}_{r}^{s} }^{\sigma -q} \right).
\end{split}\end{eqnarray}
In view of (3.50) and (3.51), we have
\begin{equation} \label{GrindEQ__3_52_}
II_{2} =\left\| f^{\left(q\right)} \left(u\right)-f^{\left(q\right)} \left(v\right)\right\| _{p_{5} } \left\| \prod _{i=1}^{q}D^{\alpha _{i} } u \right\| _{\dot{H}_{r_{5} }^{v} } \lesssim\left(\left\| u\right\| _{\dot{H}_{r}^{s} }^{\sigma } +\left\| v\right\| _{\dot{H}_{r}^{s} }^{\sigma } \right)\left\| u-v\right\| _{\dot{H}_{r}^{s} } .
\end{equation}
In view of (3.34), (3.47) and (3.52), we have
\begin{eqnarray}\begin{split} \label{GrindEQ__3_53_}
&\left\| I_{1} \right\| _{\dot{H}_{p}^{v} }\lesssim I_{2}+II_{2}\\
&~~~~~\lesssim\left\| u-v\right\| _{a}^{\min \{ \sigma -\left\lceil s\right\rceil +1,\;1\} } \left(\left\| u\right\| _{\dot{H}_{r}^{s} }^{\max \{ \left\lceil s\right\rceil ,\;\sigma \} } +\left\| v\right\| _{\dot{H}_{r}^{s} }^{\max \{ \left\lceil s\right\rceil ,\;\sigma \} } \right)+\left(\left\| u\right\| _{\dot{H}_{r}^{s} }^{\sigma } +\left\| v\right\| _{\dot{H}_{r}^{s} }^{\sigma } \right)\left\| u-v\right\| _{\dot{H}_{r}^{s} } ,
\end{split}\end{eqnarray}
this completes the estimate of $\left\| I_{1} \right\| _{\dot{H}_{p}^{v} }$.

\emph{Step 2.2.} Next, we estimate $\left\| II_{1} \right\| _{\dot{H}_{p}^{v} } $, where $II_{1}$ is given in (3.21).
It follows from (3.22) and Lemma 2.1 (fractional product rule) that
\begin{eqnarray}\begin{split} \label{GrindEQ__3_54_}
\left\| II_{1} \right\| _{\dot{H}_{p}^{v} } \lesssim& \left\| f^{\left(q\right)} \left(v\right)\right\| _{\dot{H}_{p_{4} }^{v} } \left\| \prod _{i=1}^{q}D^{\alpha _{i} } u -\prod _{i=1}^{q}D^{\alpha _{i} } v \right\| _{r_{4} }+\left\| f^{\left(q\right)} \left(v\right)\right\| _{p_{5} } \left\| \prod _{i=1}^{q}D^{\alpha _{i} } u -\prod _{i=1}^{q}D^{\alpha _{i} } v \right\| _{\dot{H}_{r_{5} }^{v} } ,
\end{split}\end{eqnarray}
where $p_{4}$, $p_{5}$, $r_{4}$ and $r_{5}$ are given in (3.35). Putting $\frac{1}{r_{8} } =\frac{1}{r} -\frac{\left[s\right]}{n} $ and using lemma 2.4, we have the embedding $\dot{H}_{r}^{s} \subset \dot{H}_{r_{8} }^{v} $. Noticing that $\frac{1}{p_{4} } =\frac{1}{p_{7} } +\frac{1}{r_{8}}$, it follows from (3.15), lemma 2.3 (fractional chain rule) and H\"{o}lder inequality that
\begin{equation} \label{GrindEQ__3_55_}
\left\| f^{\left(q\right)} \left(v\right)\right\| _{\dot{H}_{p_{4} }^{v} } \lesssim\left\| f^{\left(q+1\right)} \left(v\right)\right\| _{p_{7} } \left\| v\right\| _{\dot{H}_{r_{8} }^{v} } \lesssim\left\| v\right\| _{a}^{\sigma -q} \left\| v\right\| _{\dot{H}_{r_{8} }^{v} } \lesssim\left\| v\right\| _{\dot{H}_{r}^{s} }^{\sigma +1-q} ,
\end{equation}
where $a$ and $p_{7}$ are given in (3.23) and (3.38) respectively. By (3.15), we also have
\begin{equation} \label{GrindEQ__3_56_}
\left\| f^{\left(q\right)} \left(v\right)\right\| _{p_{5} } \lesssim\left\| \left|v\right|^{\sigma +1-q} \right\| _{p_{5} } \lesssim\left\| v\right\| _{a}^{\sigma +1-q} \lesssim\left\| v\right\| _{\dot{H}_{r}^{s} }^{\sigma +1-q} .
\end{equation}
It follows from (3.30), (3.35) and H\"{o}lder inequality that
\begin{eqnarray}\begin{split} \label{GrindEQ__3_57_}
\left\| \prod _{i=1}^{q}D^{\alpha _{i} } u -\prod _{i=1}^{q}D^{\alpha _{i} } v \right\| _{r_{4} } &\le \sum _{i=1}^{q}\left\| \prod _{j=1}^{i-1}D^{\alpha _{j} } u \prod _{j=i+1}^{q}D^{\alpha _{j} } v \left(D^{\alpha _{i} } u-D^{\alpha _{i} } v\right)\right\| _{p_{4}}\\
&\lesssim \sum _{i=1}^{q}\left\| u-v\right\| _{\dot{H}_{a_{i} }^{\left|\alpha _{i} \right|} } \prod _{j=1}^{i-1}\left\| u\right\| _{\dot{H}_{a_{j} }^{\left|\alpha _{j} \right|} }  \prod _{j=i+1}^{q}\left\| v\right\| _{\dot{H}_{a_{j} }^{\left|\alpha _{j}\right|}}\\
&\lesssim\left(\left\| u\right\| _{\dot{H}_{r}^{s} }^{q-1} +\left\| v\right\| _{\dot{H}_{r}^{s} }^{q-1} \right)\left\| u-v\right\| _{\dot{H}_{r}^{s} } .
\end{split}\end{eqnarray}
We also have
\begin{equation} \label{GrindEQ__3_58_}
\left\| \prod _{i=1}^{q}D^{\alpha _{i} } u -\prod _{i=1}^{q}D^{\alpha _{i} } v \right\| _{\dot{H}_{r_{5} }^{v} } \le \sum _{i=1}^{q}\left\| \prod _{j=1}^{i-1}D^{\alpha _{j} } u \prod _{j=i+1}^{q}D^{\alpha _{j} } v \left(D^{\alpha _{i} } u-D^{\alpha _{i} } v\right)\right\| _{\dot{H}_{r_{5} }^{v} }  .
\end{equation}
Using Corollary 2.2, (3.48) and repeating the same argument as in the estimate of (3.49), we have
\begin{equation} \label{GrindEQ__3_59_}
\left\| \prod _{j=1}^{i-1}D^{\alpha _{j} } u \prod _{j=i+1}^{q}D^{\alpha _{j} } v \left(D^{\alpha _{i} } u-D^{\alpha _{i} } v\right)\right\| _{\dot{H}_{r_{5} }^{v} } \lesssim\left(\left\| u\right\| _{\dot{H}_{r}^{s} }^{q-1} +\left\| v\right\| _{\dot{H}_{r}^{s} }^{q-1} \right)\left\| u-v\right\| _{\dot{H}_{r}^{s} } ,
\end{equation}
whose proof will be omitted. In view of (3.58) and (3.59), we have
\begin{equation} \label{GrindEQ__3_60_}
\left\| \prod _{i=1}^{q}D^{\alpha _{i} } u -\prod _{i=1}^{q}D^{\alpha _{i} } v \right\| _{\dot{H}_{r_{5} }^{v} } \lesssim\left(\left\| u\right\| _{\dot{H}_{r}^{s} }^{q-1} +\left\| v\right\| _{\dot{H}_{r}^{s} }^{q-1} \right)\left\| u-v\right\| _{\dot{H}_{r}^{s} } .
\end{equation}
Using (3.54)--(3.57) and (3.60), we have
\begin{equation} \label{GrindEQ__3_61_}
\left\| II_{1} \right\| _{\dot{H}_{p}^{v} } \lesssim\left(\left\| u\right\| _{\dot{H}_{r}^{s} }^{\sigma } +\left\| v\right\| _{\dot{H}_{r}^{s} }^{\sigma } \right)\left\| u-v\right\| _{\dot{H}_{r}^{s} } .
\end{equation}
In view of (3.20), (3.53) and (3.61), we have
\begin{eqnarray}\begin{split} \label{GrindEQ__3_62_}
\left\| D^{\alpha } f(u)-D^{\alpha } f(v)\right\| _{\dot{H}_{p}^{v} } \lesssim&\left\| u-v\right\| _{a}^{\min \{ \sigma -\left\lceil s\right\rceil +1,\;1\} } \left(\left\| u\right\| _{\dot{H}_{r}^{s} }^{\max \{ \left\lceil s\right\rceil ,\;\sigma \} } +\left\| v\right\| _{\dot{H}_{r}^{s} }^{\max \{ \left\lceil s\right\rceil ,\;\sigma \}} \right)\\
&+\left(\left\| u\right\| _{\dot{H}_{r}^{s} }^{\sigma } +\left\| v\right\| _{\dot{H}_{r}^{s} }^{\sigma } \right)\left\| u-v\right\|_{\dot{H}_{r}^{s} } ,
\end{split}\end{eqnarray}
this completes the proof of (3.17). If $\sigma \ge \left\lceil s\right\rceil $, then (3.18) follows directly from (3.17) and the embedding $\dot{H}_{r}^{s} \subset L^{a}$.
\end{proof}

\begin{lem}\label{lem 3.3.}
Let $s>0$ and $f\left(z\right)$ be a polynomial in $z$ and $\bar{z}$ satisfying $1<\deg \left(f\right)=1+\sigma $. Suppose also that \textnormal{(3.4)} holds. Then we have
\begin{equation} \label{GrindEQ__3_63_}
\left\| f(u)-f(v)\right\| _{\dot{H}_{p}^{s} } \lesssim\left(\left\| u\right\| _{\dot{H}_{r}^{s} }^{\sigma } +\left\| v\right\| _{\dot{H}_{r}^{s} }^{\sigma } \right)\left\| u-v\right\| _{\dot{H}_{r}^{s} } .
\end{equation}
\end{lem}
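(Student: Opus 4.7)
The plan is to reduce to a single monomial by linearity, expand the monomial difference algebraically via the telescoping identity (3.30), and then apply the multilinear fractional Leibniz rule (Corollary 2.2) together with the Sobolev embedding $\dot{H}_r^s \subset L^a$, where $a = rn/(n-rs)$. No fractional chain rule is needed because $f$ is a polynomial, so the argument is purely algebraic and should be noticeably simpler than the proofs of Lemmas 3.1 and 3.2.

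First I would write $f(z) = \sum c_{\alpha,\beta}\, z^{\alpha}\bar{z}^{\beta}$ as a finite sum of monomials with $\alpha + \beta = 1+\sigma$ (the top-degree case; any lower-order monomials occurring in $f$ are handled by the same argument with trivial modifications). Since the desired bound is linear in $f$, it suffices to estimate $u^{\alpha}\bar{u}^{\beta} - v^{\alpha}\bar{v}^{\beta}$ for a single monomial. Viewing this as the difference of two products of $1+\sigma$ factors, I apply (3.30) to rewrite it as a sum of exactly $1+\sigma$ products, in each of which precisely one factor has the form $u-v$ or $\bar{u}-\bar{v}$, and the remaining $\sigma$ factors are drawn from $\{u, \bar{u}, v, \bar{v}\}$.

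To each of these products I then apply Corollary 2.2 with $q = 1+\sigma$, taking $r_k^k = r$ and $r_k^i = a$ for $i \neq k$; by the scaling identity (3.4) one has $\sum_i 1/r_k^i = \sigma/a + 1/r = 1/p$ for every $k$, and the hypothesis $1/r - s/n > 0$ guarantees $1 < a < \infty$. By Lemma 2.4 one has the embedding $\dot{H}_r^s \subset L^a$, so every $L^a$-norm is controlled by the corresponding $\dot{H}_r^s$-norm. In each resulting term the distinguished $u-v$ (or $\bar{u}-\bar{v}$) factor produces a single factor of $\|u-v\|_{\dot{H}_r^s}$, either directly (when the $\dot{H}_r^s$-norm falls on it) or through the embedding $\dot{H}_r^s \subset L^a$ (otherwise), while the remaining $\sigma$ factors contribute at most $(\|u\|_{\dot{H}_r^s}+\|v\|_{\dot{H}_r^s})^{\sigma} \lesssim \|u\|_{\dot{H}_r^s}^{\sigma} + \|v\|_{\dot{H}_r^s}^{\sigma}$. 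Summation over the terms of the telescoping expansion and over the monomials of $f$ then gives (3.63). The only obstacle is bookkeeping: one must verify that in every term of the Leibniz expansion the $u-v$ factor can be isolated to produce exactly one power of $\|u-v\|_{\dot{H}_r^s}$, regardless of which slot the Sobolev norm occupies; beyond this, no substantive analytic difficulty arises.
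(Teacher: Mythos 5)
Your proof is correct, but it takes a genuinely different route from the paper's. The paper disposes of Lemma 3.3 by reduction: when $\sigma \ge \left\lceil s\right\rceil$ it simply invokes Lemmas 3.1 and 3.2, and when $\left\lceil s\right\rceil-1\ge\sigma$ it notes that all derivatives of $f$ of order greater than $\sigma+1$ vanish identically (displays (3.64)--(3.67)) and reruns the whole Lemma 3.2 machinery --- the Leibniz expansion of $D^{\alpha}f(u)$, the fractional product and chain rules, and the separate treatment of $s\in\N$ versus $s\notin\N$. You instead never differentiate $f$ at all: you expand into monomials, telescope via (3.30), and hit each resulting product with the multilinear estimate of Corollary 2.2 using the exponent splitting $\frac1p=\frac1r+\frac{\sigma}{a}$ from (3.4) together with the embedding $\dot{H}^s_r\subset L^a$. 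This is more elementary and self-contained, avoids the fractional chain rule entirely, and treats all $s>0$ uniformly with no integer/non-integer case split; what the paper's route buys in exchange is economy of exposition, since Lemmas 3.1 and 3.2 are needed elsewhere anyway. Two small points, neither of which puts you below the paper's own level of rigor: (i) you should note that $p>1$ (implicit in the definition of $\dot{H}^s_p$ and assumed explicitly in Lemmas 3.1 and 3.2) so that Corollary 2.2 applies with combined exponent $p$; (ii) for a genuinely inhomogeneous polynomial the lower-degree monomials produce lower powers $\left\|u\right\|_{\dot H^s_r}^{\sigma'}+\left\|v\right\|_{\dot H^s_r}^{\sigma'}$ with $\sigma'<\sigma$ and require different H\"older exponents than (3.4) prescribes, so the stated bound (3.63) is only literally correct for the homogeneous part --- but the paper's proof silently has the same issue (the hypotheses of Lemmas 3.1 and 3.2 fail for inhomogeneous polynomials near $u=0$), and you at least flag it.
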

\begin{proof}
If $\sigma \ge \left\lceil s\right\rceil $, (3.63) follows directly from Lemma 3.1 and Lemma 3.2. If $\left\lceil s\right\rceil -1\ge \sigma $, we have
\begin{equation} \label{GrindEQ__3_64_}
\left|f^{\left(k\right)} \left(u\right)\right|\lesssim\left|u\right|^{\sigma +1-k} ,
\end{equation}
for any $0\le k\le \sigma +1$ and
\begin{equation} \label{GrindEQ__3_65_}
\left|f^{\left(k\right)} \left(u\right)\right|=0,
\end{equation}
for $k>\sigma +1$. Using (3.64) and (3.65), we have
\begin{equation} \label{GrindEQ__3_66_}
\left|f^{(q)} \left(u\right)-f^{(q)} \left(v\right)\right|\;\lesssim\left|u-v\right|\left(\left|u\right|^{\sigma -q} +\left|v\right|^{\sigma -q} \right),
\end{equation}
for any $q\le \sigma $ and
\begin{equation} \label{GrindEQ__3_67_}
\left|f^{(q)} \left(u\right)-f^{(q)} \left(v\right)\right|=0,
\end{equation}
for any $q>\sigma $. Using (3.64)--(3.67) and the same argument as in the proof of Lemma 3.2, we can easily prove (3.63) in the case $\left\lceil s\right\rceil \ge \sigma +1$ and we omit the details.
\end{proof}

\begin{lem}\label{lem 3.4.}
Let $p>1$, $s>0$ and $\sigma >0$. Assume that $f\in C^{1} \left(\C\to \C\right)$ satisfies
\begin{equation} \label{GrindEQ__3_68_}
\left|f'\left(u\right)\right|\lesssim\left|u\right|^{\sigma } ,
\end{equation}
for any $u\in \C$. Suppose also that
\[\frac{1}{p} =\left(\sigma +1\right)\left(\frac{1}{r} -\frac{s}{n} \right),~\frac{1}{r} -\frac{s}{n} >0.\]
Then we have
\[\left\| f(u)-f(v)\right\| _{p} \lesssim\left(\left\| u\right\| _{\dot{H}_{r}^{s} }^{\sigma } +\left\| v\right\| _{\dot{H}_{r}^{s} }^{\sigma } \right)\left\| u-v\right\| _{\dot{H}_{r}^{s} } .\]
\end{lem}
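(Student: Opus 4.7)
The estimate to prove is an $L^p$ bound (no derivatives on $f(u)-f(v)$ itself), so it should be strictly simpler than the Sobolev-norm estimates in Lemmas 3.1--3.3, and I expect the proof to reduce to a direct application of the fundamental theorem of calculus, Hölder's inequality, and the Sobolev embedding $\dot H^s_r \subset L^a$ with $\frac{1}{a}=\frac{1}{r}-\frac{s}{n}>0$.

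The plan is as follows. First, as in the opening of Lemma 3.1, I would write
\[
f(u)-f(v)=\int_0^1\left[f_z(v+t(u-v))(u-v)+f_{\bar z}(v+t(u-v))(\overline{u-v})\right]dt,
\]
and (WLOG, for notational simplicity) treat $f$ as a function of a real variable so that $f(u)-f(v)=(u-v)\int_0^1 f'(v+t(u-v))\,dt$. Then I would apply Hölder's inequality in the form
\[
\|f(u)-f(v)\|_p \le \|u-v\|_a \left\|\int_0^1 f'(v+t(u-v))\,dt\right\|_{p_1},
\]
where $\frac{1}{p_1}=\frac{1}{p}-\frac{1}{a}$. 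Using the hypothesis on $p$, a short computation gives $\frac{1}{p_1}=(\sigma+1)(\frac{1}{r}-\frac{s}{n})-(\frac{1}{r}-\frac{s}{n})=\frac{\sigma}{a}$, so in particular $\sigma p_1=a$ and $p_1\in(1,\infty)$.

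Next, by the growth hypothesis \eqref{GrindEQ__3_68_},
\[
\left|\int_0^1 f'(v+t(u-v))\,dt\right| \lesssim (|u|+|v|)^\sigma,
\]
uniformly in $t\in[0,1]$, so Minkowski's integral inequality and $\sigma p_1=a$ yield
\[
\left\|\int_0^1 f'(v+t(u-v))\,dt\right\|_{p_1} \lesssim \bigl\||u|+|v|\bigr\|_a^\sigma \lesssim \|u\|_a^\sigma+\|v\|_a^\sigma.
\]
Finally, the Sobolev embedding $\dot H^s_r\subset L^a$ (Lemma 2.4, which applies precisely because $\frac{1}{a}=\frac{1}{r}-\frac{s}{n}$) gives $\|u\|_a\lesssim\|u\|_{\dot H^s_r}$ and $\|u-v\|_a\lesssim\|u-v\|_{\dot H^s_r}$, and combining everything produces the claimed bound.

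There is no real obstacle here; the delicate point is only that the exponent condition in the hypothesis is exactly the scaling-critical relation that makes Hölder close up, via $\frac{1}{p_1}=\frac{\sigma}{a}$, together with the Sobolev embedding $\dot H^s_r\subset L^a$ being admissible. The $C^1$ regularity of $f$ is enough to justify the integral representation, and no fractional product or chain rule is needed since we are not differentiating $f(u)-f(v)$.
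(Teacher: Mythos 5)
Your proposal is correct and follows essentially the same route as the paper's proof: the integral representation of $f(u)-f(v)$, the pointwise bound $|f'(v+t(u-v))|\lesssim(|u|+|v|)^{\sigma}$ from \eqref{GrindEQ__3_68_}, Hölder's inequality with the exponent relation $\frac{1}{p}=\frac{\sigma+1}{a}$, and the embedding $\dot H^{s}_{r}\subset L^{a}$. The only cosmetic difference is that the paper first derives the pointwise inequality $|f(u)-f(v)|\lesssim(|u|^{\sigma}+|v|^{\sigma})|u-v|$ and then applies Hölder, whereas you apply Hölder first and then estimate the integral factor; the two orderings are equivalent.
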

\begin{proof}
Since
\[f(u)-f(v)=\int _{0}^{1}\left[f_{z} \left(v+t\left(u-v\right)\right)\left(u-v\right)+f_{\bar{z}} \left(v+t\left(u-v\right)\right)\left(\bar{u}-\bar{v}\right)\right]dt ,\]
it follows from (3.68) that
\[\left|f(u)-f(v)\right|\lesssim\left(\left|u\right|^{\sigma } +\left|v\right|^{\sigma } \right)\left|u-v\right|.\]
Thus we have
\begin{eqnarray}\begin{split}\nonumber
\left\| f(u)-f(v)\right\| _{p}&\lesssim\left\| \left(\left|u\right|^{\sigma } +\left|v\right|^{\sigma } \right)\left(u-v\right)\right\| _{p} \le \left(\left\| u\right\| _{a}^{\sigma } +\left\| v\right\| _{a}^{\sigma } \right)\left\| u-v\right\| _{a}\\
&\lesssim\left(\left\| u\right\| _{\dot{H}_{r}^{s} }^{\sigma } +\left\| v\right\| _{\dot{H}_{r}^{s} }^{\sigma } \right)\left\| u-v\right\| _{\dot{H}_{r}^{s} },
\end{split}\end{eqnarray}
where $\frac{1}{a} =\frac{1}{r} -\frac{s}{n} $ and the last inequality follows from the embedding $\dot{H}_{r}^{s} \subset L^{a} $.
\end{proof}

\begin{rem}\label{rem 3.5.}
\textnormal{Let $B=B\left(0,\;1\right)=\left\{x\in \R^{n} ;\;|x|\le 1\right\}$, $b>0$ and $s\ge0$. If $\frac{n}{\gamma }>b+s$, then $\chi_{B}|x|^{-b} \in \dot{H}_{\gamma }^{s}(\R^{n})$. In fact, an easy computation shows that $\chi_{B}|x|^{-b} \in L^{\gamma}(\R^{n})$, if $\frac{n}{\gamma }>b$ (see Remark 2.8 of \cite{G17}). Putting $\frac{n}{\gamma_{1}}:=\left\lceil s\right\rceil-s+\frac{n}{\gamma}$, we have $\frac{n}{\gamma_{1}}>b+\left\lceil s\right\rceil$. Thus, it follows from Lemma 2.4 and Lemma 2.7 that
\begin{eqnarray}\begin{split}\nonumber
\left\|\chi_{B}|x|^{-b}\right\|_{\dot{H}_{\gamma }^{s}(\R^{n})}&\lesssim\left\|\chi_{B}|x|^{-b}\right\|_{\dot{H}_{\gamma_{1} }^{\left\lceil s\right\rceil}(\R^{n})}\lesssim \sum _{\left|\alpha \right|=\left\lceil s\right\rceil}\left\| D^{\alpha }(\chi_{B}|x|^{-b})\right\| _{L^{\gamma_{1}}(\R^{n}) }\\
&\lesssim \left\| \chi_{B}|x|^{-b-\left\lceil s\right\rceil}\right\| _{L^{\gamma_{1}}(\R^{n})}<\infty
\end{split}\end{eqnarray}
Similarly, if $\frac{n}{\gamma }<b+s$, we have $\chi_{B^{C}}|x|^{-b}\in \dot{H}_{\gamma }^{s}(\R^{n})$.}
\end{rem}

Using Lemma 3.1--Lemma 3.4 and Remark 3.5, we have the following important estimates of the term $|x|^{-b} f(u)-|x|^{-b} f(v)$. We divide the study in two cases: $n\ge 3$ and $n=1,\, 2$. Here we use the similar argument as in the proof of Lemma 3.5 and Lemma 3.6 of \cite{AK21}. For the reader's convenience and for simplicity, we only give the sketch proof in the case $n\ge 3$.

\begin{lem}\label{lem 3.6.}
Let $n\ge 3$, $0<s<\frac{n}{2} $, $0<b<\min \left\{2,\; 1+\frac{n-2s}{2} \right\}$ and $0<\sigma <\frac{4-2b}{n-2s} $. Assume that $f$ is of class $C\left(\sigma ,s,b\right)$.

\textnormal{(1)} If $f\left(z\right)$ is a polynomial in $z$ and $\bar{z}$, or if not we assume further that $\sigma \ge \left\lceil s\right\rceil $, then we have
\begin{eqnarray}\begin{split} \label{GrindEQ__3_69_}
&\left\| |x|^{-b} f(u)-|x|^{-b} f(v)\right\| _{S'\left(I,\;\dot{H}^{s} \right)}\\
&~~~~~~~~~~~~~~~\lesssim\left(T^{\theta _{1} } +T^{\theta _{2} } \right)\left(\left\| u\right\| _{S\left(I,\;\dot{H}^{s} \right)}^{\sigma } +\left\| v\right\| _{S\left(I,\;\dot{H}^{s} \right)}^{\sigma } \right)\left\| u-v\right\| _{S\left(I,\;\dot{H}^{s} \right)} ,
\end{split}\end{eqnarray}
where $I=\left[-T,\, T\right]$ and $\theta _{1} ,\, \theta _{2} >0$.

\textnormal{(2)} If $f$ is not a polynomial and $\left\lceil s\right\rceil >\sigma > \left\lceil s\right\rceil -1$, then we have
\begin{eqnarray}\begin{split} \label{GrindEQ__3_70_}
&\left\| |x|^{-b} f(u)-|x|^{-b} f(v)\right\| _{S'\left(I,\;\dot{H}^{s} \right)}\\
&~~~~~~~~~~~~~~~~~~~~\lesssim\left(T^{\theta _{1} } +T^{\theta _{2} } \right)\left(\left\| u\right\| _{S\left(I,\;\dot{H}^{s} \right)}^{\sigma } +\left\| v\right\| _{S\left(I,\;\dot{H}^{s} \right)}^{\sigma } \right)\left\| u-v\right\| _{S\left(I,\;\dot{H}^{s} \right)}\\
&~~~~~~~~~~~~~~~~~~~~~~+\left(\left\| u\right\| _{L^{\gamma \left(r_{0} \right)} \left(I,\;\dot{H}_{r_{0} }^{s} \right)}^{\left\lceil s\right\rceil } +\left\| v\right\| _{L^{\gamma \left(r_{0} \right)} \left(I,\;\dot{H}_{r_{0} }^{s} \right)}^{\left\lceil s\right\rceil } \right)\left\| u-v\right\| _{L^{\bar{\gamma }\left(r_{0} \right)} \left(I,\;L^{a_{0} } \right)}^{\sigma +1-\left\lceil s\right\rceil }\\
&~~~~~~~~~~~~~~~~~~~~~~+\left(\left\| u\right\| _{L^{\gamma \left(r_{1} \right)} \left(I,\;\dot{H}_{r_{1} }^{s} \right)}^{\left\lceil s\right\rceil }+\left\| v\right\| _{L^{\gamma \left(r_{1} \right)} \left(I,\;\dot{H}_{r_{1} }^{s} \right)}^{\left\lceil s\right\rceil } \right)\left\| u-v\right\| _{L^{\bar{\gamma }\left(r_{1} \right)} \left(I,\;L^{a_{1} } \right)}^{\sigma +1-\left\lceil s\right\rceil},
\end{split}\end{eqnarray}
where
\begin{equation} \nonumber
2<r_{i} <\frac{2n}{n-2},~\frac{1}{a_{i}}=\frac{1}{r_{i}}-\frac{s}{n}, ~0<\bar{\gamma }\left(r_{i} \right)<\gamma \left(r_{i} \right),~i=0,\;1.
\end{equation}
\end{lem}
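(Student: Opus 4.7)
The plan is to split $|x|^{-b}=\chi_{B}|x|^{-b}+\chi_{B^{C}}|x|^{-b}$ with $B=\{|x|\le 1\}$ and bound each piece separately in the dual Strichartz norm. For each piece I would choose an admissible pair $(\gamma(q),q)$ inside the infimum defining $\|\cdot\|_{S'(I,\dot H^{s})}$ and test against $L^{\gamma(q)'}(I,\dot H^{s}_{q'})$. The fractional product rule (Lemma 2.1) gives, pointwise in time,
\[
\bigl\| \chi|x|^{-b}(f(u)-f(v))\bigr\|_{\dot H^{s}_{q'}} \lesssim \bigl\|\chi|x|^{-b}\bigr\|_{\alpha_{1}} \|f(u)-f(v)\|_{\dot H^{s}_{\beta_{1}}} + \bigl\|\chi|x|^{-b}\bigr\|_{\dot H^{s}_{\alpha_{2}}} \|f(u)-f(v)\|_{\beta_{2}},
\]
where $\chi$ stands for either $\chi_{B}$ or $\chi_{B^{C}}$ and $1/q'=1/\alpha_{i}+1/\beta_{i}$. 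I would tune $\alpha_{i}$ so that $n/\alpha_{1}>b$ and $n/\alpha_{2}>b+s$ on the $\chi_{B}$ piece, and $n/\alpha_{1}<b$ and $n/\alpha_{2}<b+s$ on the $\chi_{B^{C}}$ piece, which makes the weight factors finite by Remark 3.5. The companion $\beta_{i}$ are then chosen so that, via the embedding $\dot H^{s}_{r}\subset L^{a}$ with $1/a=1/r-s/n$, the scaling relation (3.4) underlying Lemmas 3.1--3.4 is satisfied for the admissible exponent $r$ carrying the right-hand Strichartz norm.

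After integrating in time and applying H\"{o}lder, each of the $\sigma+1$ spatial factors built from $u$, $v$, $u-v$ contributes a $\gamma(r)$ time exponent, and I would want the residual time exponent to be strictly positive. The strict subcriticality $\sigma<(4-2b)/(n-2s)$ together with $0<b<1+(n-2s)/2$ provides exactly that room on both pieces, producing two positive time powers $\theta_{1},\theta_{2}>0$, one per piece. This balancing is essentially the computation behind the existence Lemmas 3.5--3.6 of \cite{AK21}, rerun here with a difference $u-v$ in place of a single solution.

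For part (1), either Lemma 3.3 (polynomial case) or the second half of Lemma 3.2 ($\sigma\ge\lceil s\rceil$) supplies the clean Lipschitz-type bound $\|f(u)-f(v)\|_{\dot H^{s}_{p}}\lesssim(\|u\|_{\dot H^{s}_{r}}^{\sigma}+\|v\|_{\dot H^{s}_{r}}^{\sigma})\|u-v\|_{\dot H^{s}_{r}}$, together with the companion $L^{p}$-bound from Lemma 3.4; feeding these into the product-rule split and H\"{o}lder-in-time yields (3.69) directly. For part (2), where $\lceil s\rceil-1<\sigma<\lceil s\rceil$, the genuine H\"{o}lder remainder in Lemma 3.2, namely $\|u-v\|_{a}^{\sigma+1-\lceil s\rceil}(\|u\|_{\dot H^{s}_{r}}^{\lceil s\rceil}+\|v\|_{\dot H^{s}_{r}}^{\lceil s\rceil})$, propagates through the product rule and time H\"{o}lder into the two trailing terms of (3.70); the two exponents $r_{0},r_{1}$ track the two decomposition pieces $\chi_{B}$ and $\chi_{B^{C}}$, and the reduced time integrability $\bar\gamma(r_{i})<\gamma(r_{i})$ arises because only $\sigma+1-\lceil s\rceil$ copies of $u-v$ sit in $L^{a_{i}}$ rather than in the full $\dot H^{s}_{r_{i}}$ norm.

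The main obstacle will be the simultaneous bookkeeping of exponents: one must enforce admissibility of both $(\gamma(q),q)$ and $(\gamma(r),r)$, the Remark 3.5 conditions on $\chi_{B}|x|^{-b}$ and $\chi_{B^{C}}|x|^{-b}$ separately, the Sobolev relation (3.4), and strict positivity of the resulting time exponents --- four constraints against essentially two free spatial parameters per piece. That this system admits a solution precisely under $0<s<n/2$, $0<b<\min\{2,1+(n-2s)/2\}$ and $0<\sigma<(4-2b)/(n-2s)$ is the content of the parameter analysis already carried out in \cite{AK21}, so I would reuse that choice of exponents verbatim, modifying it only to route the difference $u-v$ into the correct slot --- the $\dot H^{s}_{r}$ slot for part (1), the $L^{a_{i}}$ slot in the extra terms of part (2).
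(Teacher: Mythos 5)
Your proposal follows essentially the same route as the paper: the unit-ball decomposition $\chi_{B}|x|^{-b}+\chi_{B^{C}}|x|^{-b}$, the fractional product rule to peel off the singular weight (controlled via Remark 3.5), Lemmas 3.2--3.4 for the difference of nonlinearities, and H\"{o}lder in time with the exponent system imported from Lemmas 3.5--3.6 of \cite{AK21} to produce $T^{\theta_{1}},T^{\theta_{2}}$ and, in case (2), the residual H\"{o}lder-continuity terms with $\bar{\gamma}(r_{i})<\gamma(r_{i})$. The paper's own proof is likewise only a sketch deferring the parameter bookkeeping to \cite{AK21}, so your plan matches it in both structure and level of detail.
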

\begin{proof}
We only prove (3.70) whose proof is more complicated than that of (3.69). (3.69) is proved similarly. Putting $B=B\left(0,\;1\right)=\left\{x\in \R^{n} ;\;|x|\le 1\right\}$, we have
\[\left\| |x|^{-b} f(u)-|x|^{-b} f(v)\right\| _{S'\left(I,\;\dot{H}^{s} \right)} \le C_{1} +C_{2} ,\]
where
\begin{equation}\label{GrindEQ__3_71_}
C_{1} ={\mathop{\inf }\limits_{\left(\gamma \left(r\right),\;r\right)\in A}} \left\| \chi_{B^{C}}|x|^{-b} (f(u)-f(v))\right\| _{L^{\gamma \left(r\right)^{{'} } } \left(I,\;\dot{H}_{r'}^{s}\right)},
\end{equation}
\begin{equation}\label{GrindEQ__3_72_}
C_{2} ={\mathop{\inf }\limits_{\left(\gamma \left(r\right),\;r\right)\in A}} \left\|\chi_{B} |x|^{-b} (f(u)-f(v))\right\| _{L^{\gamma \left(r\right)^{{'} } } \left(I,\;\dot{H}_{r'}^{s}\right)} .
\end{equation}
and $A=\left\{\left(\gamma \left(r\right),\;r\right):\;\left(\gamma \left(r\right),\;r\right)\textrm{ is admissible}\right\}$.

\noindent First, we estimate $C_{1} $. Putting $r_{0} =\frac{n(\sigma +2)}{n+\sigma s}$, we can see that $2<r_{0} <\frac{2n}{n-2n} $. Note that
\[\frac{1}{r_{0}'}=\frac{\sigma}{a_{0}}+\frac{1}{r_{0}}=\frac{\sigma}{a_{0}}+\frac{s}{n}.\]
Using Lemma 2.1, Lemma 3.2, Lemma 3.4 and Remark 3.5, we have
\begin{eqnarray}\begin{split} \label{GrindEQ__3_73_}
\left\|\chi_{B^{C}} |x|^{-b} (f(u)-f(v))\right\| _{\dot{H}_{r'_{0} }^{s}}\lesssim&\left\|\chi_{B^{C}} |x|^{-b}\right\|_{\infty}\left\|f(u)-f(v)\right\|_{\dot{H}_{r'_{0} }^{s}}\\
&+\left\|\chi_{B^{C}} |x|^{-b}\right\|_{\dot{H}_{n/s}^{s}}\left\|f(u)-f(v)\right\|_{a_{0}/(\sigma+1)}\\
\lesssim&\left\| u-v\right\| _{a_{0} }^{\sigma -\left\lceil s\right\rceil +1} \left(\left\| u\right\| _{\dot{H}_{r_{0} }^{s} }^{\left\lceil s\right\rceil } +\left\| v\right\| _{\dot{H}_{r_{0} }^{s} }^{\left\lceil s\right\rceil } \right)\\
&+\left(\left\| u\right\| _{\dot{H}_{r_{0} }^{s} }^{\sigma } +\left\| v\right\| _{\dot{H}_{r_{0} }^{s} }^{\sigma } \right)\left\| u-v\right\| _{\dot{H}_{r_{0} }^{s} }.
\end{split}\end{eqnarray}
On the other hand, we can see that
\begin{equation} \label{GrindEQ__3_74_}
\frac{1}{\gamma \left(r_{0} \right)^{{'} } } =\frac{\sigma +1}{\gamma \left(r_{0} \right)} +\frac{4-\sigma \left(n-2s\right)}{4} .
\end{equation}
Putting $\theta _{1} :=\frac{4-\sigma \left(n-2s\right)}{4} $ and using the hypothesis $\alpha <\frac{4-2b}{n-2s} $, we can see that $\theta _{1} >0$. Putting
\[\frac{1}{\bar{\gamma }\left(r_{0} \right)} :=\frac{1}{\gamma \left(r_{0} \right)} +\frac{\theta _{1} }{\sigma +1-\left\lceil s\right\rceil } ,\]
we can also see that $0<\bar{\gamma }\left(r_{0} \right)<\gamma \left(r_{0} \right)$ and
\begin{equation} \label{GrindEQ__3_75_}
\frac{1}{\gamma \left(r_{0} \right)^{{'} } } =\frac{\left\lceil s\right\rceil }{\gamma \left(r_{0} \right)} +\frac{\sigma +1-\left\lceil s\right\rceil }{\bar{\gamma }\left(r_{0} \right)} .
\end{equation}
Using (3.73)--(3.75) and H\"{o}lder inequality, we have
\begin{eqnarray}\begin{split}\label{GrindEQ__3_76_}
C_{1} &\le \left\| \chi_{B^{C}}|x|^{-b}(f(u)-f(v))\right\| _{L^{\gamma \left(r_{0} \right)^{{'} } } \left(I,\;\dot{H}_{r'_{0} }^{s}\right)}\\
&\lesssim T^{\theta _{1} } \left(\left\| u\right\| _{L^{\gamma \left(r_{0} \right)} \left(I,\;\dot{H}_{r_{0} }^{s} \right)}^{\sigma } +\left\| v\right\| _{L^{\gamma \left(r_{0} \right)} \left(I,\;\dot{H}_{r_{0} }^{s} \right)}^{\sigma } \right)\left\| u-v\right\| _{L^{\gamma \left(r_{0} \right)} \left(I,\;\dot{H}_{r_{0} }^{s} \right)}  \\
&~~+\left(\left\| u\right\| _{L^{\gamma \left(r_{0} \right)} \left(I,\;\dot{H}_{r_{0} }^{s} \right)}^{\left\lceil s\right\rceil } +\left\| v\right\| _{L^{\gamma \left(r_{0} \right)} \left(I,\;\dot{H}_{r_{0} }^{s} \right)}^{\left\lceil s\right\rceil } \right)\left\| u-v\right\| _{L^{\bar{\gamma }\left(r_{0} \right)} \left(I,\;L^{a_{0} } \right)}^{\sigma +1-\left\lceil s\right\rceil }.
\end{split}\end{eqnarray}
Next, we estimate $C_{2} $. Putting $\bar{r}=\frac{2n}{n-2} $, it is obvious that $\left(\gamma \left(\bar{r}\right),\;\bar{r}\right)$ is admissible. Then we can take $2<r_{1} <\frac{2n}{n-2} $ satisfying the following system (see the proof of Lemma 3.5 of \cite{AK21}):
\begin{equation} \label{GrindEQ__3_77_}
\left\{\begin{array}{l} {\frac{1}{\bar{r}'} -\sigma \left(\frac{1}{r_{1} } -\frac{s}{n} \right)-\frac{1}{r_{1} } >\frac{b}{n} ,~} \\ {\frac{1}{\gamma \left(\bar{r}\right)^{{'} } } \;-\frac{\sigma +1}{\gamma \left(r_{1} \right)} >0,} \\ {\frac{1}{r_{1} } >\frac{s}{n} .} \end{array}\right.
\end{equation}
Using the same argument as in the proof of Lemma 3.5 of \cite{AK21}, it follows from Lemma 3.2, Lemma 3.4 and Remark 3.5 that
\begin{eqnarray}\begin{split} \label{GrindEQ__3_78_}
\left\| \chi_{B} |x|^{-b} (f(u)-f(v))\right\| _{\dot{H}_{\bar{r}'}^{s}}\lesssim&\left\| u-v\right\| _{a_{1} }^{\sigma -\left\lceil s\right\rceil +1}\left(\left\| u\right\| _{\dot{H}_{r_{1} }^{s} }^{\left\lceil s\right\rceil }+\left\| v\right\| _{\dot{H}_{r_{1} }^{s} }^{\left\lceil s\right\rceil } \right)\\
&+\left(\left\| u\right\| _{\dot{H}_{r_{1} }^{s} }^{\sigma } +\left\| v\right\| _{\dot{H}_{r_{1} }^{s} }^{\sigma } \right)\left\| u-v\right\| _{\dot{H}_{r_{1} }^{s}},
\end{split}\end{eqnarray}
where $\frac{1}{a_{1} } =\frac{1}{r_{1} } -\frac{s}{n} $. On the other hand, putting
\[\theta _{2} :=\frac{1}{\gamma \left(\bar{r}\right)^{{'} } } \;-\frac{\sigma +1}{\gamma \left(r_{1} \right)} , ~\frac{1}{\bar{\gamma }\left(r_{1} \right)} :=\frac{1}{\gamma \left(r_{1} \right)} +\frac{\theta _{2} }{\sigma +1-\left\lceil s\right\rceil } ,\]
and using (3.77), we can see that $\theta _{2} >0$, $0<\bar{\gamma }\left(r_{1} \right)<\gamma \left(r_{1} \right)$ and
\begin{equation} \label{GrindEQ__3_79_}
\frac{1}{\gamma \left(\bar{r}\right)^{{'} } } \;=\frac{\sigma +1}{\gamma \left(r_{1} \right)} +\theta _{2} =\frac{\left\lceil s\right\rceil }{\gamma \left(r_{1} \right)} +\frac{\sigma +1-\left\lceil s\right\rceil }{\bar{\gamma }\left(r_{1} \right)} .
\end{equation}
Using (3.78), (3.79) and H\"{o}lder inequality, we have
\begin{eqnarray}\begin{split} \label{GrindEQ__3_80_}
C_{2}&\le \left\| \chi_{B} |x|^{-b} (f(u)-f(v))\right\| _{L^{\gamma \left(\bar{r} \right)^{{'} } } \left(I,\;\dot{H}_{\bar{r}'}^{s} \right)}\\
&\lesssim T^{\theta _{2} } \left(\left\| u\right\| _{L^{\gamma \left(r_{1} \right)} \left(I,\;\dot{H}_{r_{1} }^{s} \right)}^{\sigma } +\left\| v\right\| _{L^{\gamma \left(r_{1} \right)} \left(I,\;\dot{H}_{r_{1} }^{s} \right)}^{\sigma } \right)\left\| u-v\right\| _{L^{\gamma \left(r_{1} \right)} \left(I,\;\dot{H}_{r_{1} }^{s} \right)}\\
&~~+\left(\left\| u\right\| _{L^{\gamma \left(r_{1} \right)} \left(I,\;\dot{H}_{r_{1} }^{s} \right)}^{\left\lceil s\right\rceil } +\left\| v\right\| _{L^{\gamma \left(r_{1} \right)} \left(I,\;\dot{H}_{r_{1} }^{s} \right)}^{\left\lceil s\right\rceil } \right)\left\| u-v\right\| _{L^{\bar{\gamma }\left(r_{1} \right)} \left(I,\;L^{a_{1} } \right)}^{\sigma +1-\left\lceil s\right\rceil }.
\end{split}\end{eqnarray}
Thus we have
\begin{eqnarray}\begin{split}\nonumber
&\left\| |x|^{-b} f(u)-|x|^{-b} f(v)\right\| _{S'\left(I,\;\dot{H}^{s} \right)}\le C_{1} +C_{2}\\
&~~~~~~~~~~~~~~~\lesssim\left(T^{\theta _{1} } +T^{\theta _{2} } \right)\left(\left\| u\right\| _{S\left(I,\;\dot{H}^{s} \right)}^{\sigma } +\left\| v\right\| _{S\left(I,\;\dot{H}^{s} \right)}^{\sigma } \right)\left\| u-v\right\| _{S\left(I,\;\dot{H}^{s} \right)} +Z\left(u,\, v\right),
\end{split}\end{eqnarray}
where
\begin{eqnarray}\begin{split} \label{GrindEQ__3_81_}
Z\left(u,\, v\right)&=\left(\left\| u\right\| _{L^{\gamma \left(r_{1} \right)} \left(I,\;\dot{H}_{r_{1} }^{s} \right)}^{\left\lceil s\right\rceil } +\left\| v\right\| _{L^{\gamma \left(r_{1} \right)} \left(I,\;\dot{H}_{r_{1} }^{s} \right)}^{\left\lceil s\right\rceil } \right)\left\| u-v\right\| _{L^{\bar{\gamma }\left(r_{1} \right)} \left(I,\;L^{a_{1} } \right)}^{\sigma +1-\left\lceil s\right\rceil }\\
&~~+\left(\left\| u\right\| _{L^{\gamma \left(r_{1} \right)} \left(I,\;\dot{H}_{r_{1} }^{s} \right)}^{\left\lceil s\right\rceil } +\left\| v\right\| _{L^{\gamma \left(r_{1} \right)} \left(I,\;\dot{H}_{r_{1} }^{s} \right)}^{\left\lceil s\right\rceil } \right)\left\| u-v\right\| _{L^{\bar{\gamma }\left(r_{1} \right)} \left(I,\;L^{a_{1} } \right)}^{\sigma +1-\left\lceil s\right\rceil } .
\end{split}\end{eqnarray}
\end{proof}
\begin{lem}\label{lem 3.7.}
Let $n=1,\, 2$, $0<s<\frac{n}{2} $, $0<b<n-s$ and $0<\sigma <\frac{4-2b}{n-2s} $. Assume that $f$ is of class $C\left(\sigma ,s,b\right)$.
Then we have
\begin{eqnarray}\begin{split} \label{GrindEQ__3_82_}
&\left\| |x|^{-b} f(u)-|x|^{-b} f(v)\right\| _{S'\left(I,\;\dot{H}^{s} \right)}\\
&~~~~~~~~~~\lesssim\left(T^{\theta _{1} } +T^{\theta _{2} } \right)\left(\left\| u\right\| _{S\left(I,\;\dot{H}^{s} \right)}^{\sigma } +\left\| v\right\| _{S\left(I,\;\dot{H}^{s} \right)}^{\sigma } \right)\left\| u-v\right\| _{S\left(I,\;\dot{H}^{s} \right)} ,
\end{split}\end{eqnarray}
where $I=\left[-T,\, T\right]$ and $\theta _{1} ,\, \theta _{2} >0$.
\end{lem}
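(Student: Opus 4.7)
The plan is to parallel the proof of Lemma 3.6, splitting the weight as $|x|^{-b}=\chi_{B}|x|^{-b}+\chi_{B^{C}}|x|^{-b}$ with $B=B(0,1)$ and estimating each piece by a suitably chosen admissible Strichartz pair. A preliminary observation simplifies matters considerably: for $n\in\{1,2\}$ and $0<s<n/2$ we automatically have $s<1$, and then Definition 1.1 forces $\sigma\ge 1=\lceil s\rceil$. Hence we are always in the regime where Lemma 3.1 (or Lemma 3.3 in the polynomial subcase) delivers the clean bound
\[\left\|f(u)-f(v)\right\|_{\dot H_{p}^{s}}\lesssim\bigl(\|u\|_{\dot H_{r}^{s}}^{\sigma}+\|v\|_{\dot H_{r}^{s}}^{\sigma}\bigr)\|u-v\|_{\dot H_{r}^{s}},\]
so no remainder terms of the type appearing in (3.70) arise, which is why the conclusion (3.82) matches (3.69) rather than (3.70).

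For the exterior piece $\chi_{B^{C}}|x|^{-b}$, I would take $r_{0}=\frac{n(\sigma+2)}{n+\sigma s}$, apply the fractional product rule (Lemma 2.1) after noting that $\chi_{B^{C}}|x|^{-b}\in L^{\infty}\cap \dot H_{n/s}^{s}$ by Remark 3.5, then invoke Lemma 3.1 or Lemma 3.3 on $f(u)-f(v)$, and finish with Hölder in time to generate the factor $T^{\theta_{1}}$ with $\theta_{1}=\frac{4-\sigma(n-2s)}{4}>0$ obtained from $\sigma<\sigma_{s}$. This reproduces exactly the argument leading to (3.76) in Lemma 3.6.

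The interior piece $\chi_{B}|x|^{-b}$ is the step that genuinely differs from $n\ge 3$, and I expect this to be the main obstacle. Since the Sobolev endpoint $\bar r=\frac{2n}{n-2}$ is unavailable, I would instead choose an admissible pair $(\gamma(\bar r),\bar r)$ with $\bar r$ finite but large (using that $n=1,2$ admits $\bar r\in[2,\infty)$ for $n=2$ and $\bar r\in[2,\infty]$ for $n=1$) together with an auxiliary exponent $r_{1}$ satisfying the three-part system
\[\frac{1}{\bar r'}-\sigma\Bigl(\frac{1}{r_{1}}-\frac{s}{n}\Bigr)-\frac{1}{r_{1}}>\frac{b}{n},\qquad \frac{1}{\gamma(\bar r)'}-\frac{\sigma+1}{\gamma(r_{1})}>0,\qquad \frac{1}{r_{1}}>\frac{s}{n},\]
in perfect analogy with (3.77). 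Solvability of this system is precisely where the low-dimensional hypothesis $b<n-s$ enters (replacing $b<1+\frac{n-2s}{2}$ used for $n\ge 3$); the construction mirrors the one in the proof of Lemma 3.5 of \cite{AK21}, and one checks that letting $\bar r$ approach the endpoint while keeping the three strict inequalities is possible exactly because $b<n-s$ leaves room.

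Once $\bar r$ and $r_{1}$ are fixed, Remark 3.5 ensures $\chi_{B}|x|^{-b}\in\dot H_{\gamma}^{s}$ for a suitable $\gamma$, Lemma 2.1 together with Lemma 3.1 or Lemma 3.3 handles $f(u)-f(v)$ in $\dot H_{\bar r'}^{s}$, and Hölder in time yields the $T^{\theta_{2}}$ contribution with $\theta_{2}=\frac{1}{\gamma(\bar r)'}-\frac{\sigma+1}{\gamma(r_{1})}>0$. Adding the two pieces and taking the infimum over admissible pairs in the definition of $S'(I,\dot H^{s})$ produces (3.82). The only real technical point to verify carefully is the solvability of the exponent system under $b<n-s$; the remainder is a mechanical transcription of the $n\ge 3$ argument.
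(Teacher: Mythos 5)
Your proposal is correct and follows essentially the same route as the paper: the paper's own (very terse) proof likewise begins by observing that $s<1$ forces $\sigma\ge\lceil s\rceil$ via Definition 1.1, so the clean Lipschitz bound of Lemma 3.1 applies with no remainder terms, and then refers to combining the $n=1,2$ exponent construction from Lemma 3.6 of \cite{AK21} (where $b<n-s$ replaces the $n\ge 3$ condition) with the argument of Lemma 3.6 here, exactly as you describe.
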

\begin{proof}
Since $s<1$, it follows from the definition of class $C\left(\sigma ,s,b\right)$ that $\sigma \ge \left\lceil s\right\rceil $. Hence, combining the argument used in the proof of Lemma 3.6 of \cite{AK21} with that of Lemma 3.6, we can prove Lemma 3.7 and we omitted the details.\end{proof}

Now we are ready to prove (3.2) in the case $0<s<n/2$.

\begin{proof}[\textbf{Proof of (3.2) in the case:}] $0<s<n/2$.

Put $I=\left[-T,\, T\right]$. Since $\left\| u_{m} \right\| _{S\left(I,\;H^{s} \right)} $ is bounded, there exists
\begin{equation} \label{GrindEQ__3_83_}
M=\left\| u\right\| _{S\left(I,\;H^{s} \right)} +{\mathop{\sup }\limits_{m\ge 1}} \left\| u_{m} \right\| _{S\left(I,\;H^{s} \right)} <\infty
\end{equation}
such that $\left\| u\right\| _{S\left(I,\;H^{s} \right)} \le M$ and $\left\| u_{m} \right\| _{S\left(I,\;H^{s} \right)} \le M$. It follows from (3.1), Lemma 2.8 (Strichartz estimates) that
\begin{equation} \label{GrindEQ__3_84_}
\left\| u_{m} -u\right\| _{S\left(I,\;H^{s} \right)} \lesssim\left\| \phi _{m} -\phi \right\| _{H^{s} } +\left\| |x|^{-b} f\left(u_{m} \right)-|x|^{-b} f(u)\right\| _{S'\left(I,\;H^{s} \right)} .
\end{equation}

\textbf{Case 1.} if $f\left(z\right)$ is a polynomial in $z$ and $\bar{z}$, or if not we assume further that $\sigma \ge \left\lceil s\right\rceil $, then it follows from (3.110) in \cite{AK21}, (3.84), Lemma 3.6 and Lemma 3.7 that
\begin{equation} \label{GrindEQ__3_85_}
\left\| u_{m} -u\right\| _{S\left(I,\;H^{s} \right)} \lesssim\left\| \phi _{m} -\phi \right\| _{H^{s} } +\left(T^{\theta _{1} } +T^{\theta _{2} } \right)M^{\sigma } \left\| u_{m} -u\right\| _{S\left(I,\;H^{s} \right)} .
\end{equation}
If we take $T>0$ such that $\left(T^{\theta _{1} } +T^{\theta _{2} } \right)M^{\sigma } \le 1/2$, we can deduce from (3.85) that as $m\to \infty $,
\[\left\| u_{m} -u\right\| _{S\left(I,\;H^{s} \right)} \lesssim\left\| \phi _{m} -\phi \right\| _{H^{s} } \to 0,\]
so the solution flow is locally Lipschitz.

\textbf{Case 2.} if $f$ is not a polynomial, $\left\lceil s\right\rceil >\sigma > \left\lceil s\right\rceil -1$ and $n\ge 3$, then it follows from (3.110) in \cite{AK21} and Lemma 3.6 that
\begin{eqnarray}\begin{split} \label{GrindEQ__3_86_}
&\left\| |x|^{-b} f\left(u_{m} \right)-|x|^{-b} f(u)\right\| _{S'\left(I,\;H^{s} \right)}\\
&~~~~~\lesssim\left(T^{\theta _{1} } +T^{\theta _{2} } \right)\left(\left\| u_{m} \right\| _{S\left(I,\;H^{s} \right)}^{\sigma } +\left\| u\right\| _{S\left(I,\;H^{s} \right)}^{\sigma } \right)\left\| u_{m} -u\right\| _{S\left(I,\;H^{s} \right)} \\
&~~~~~~+\left(\left\| u_{m} \right\| _{L^{\gamma \left(r_{0} \right)} \left(I,\;H_{r_{0} }^{s} \right)}^{\left\lceil s\right\rceil } +\left\| u\right\| _{L^{\gamma \left(r_{0} \right)} \left(I,\;H_{r_{0} }^{s} \right)}^{\left\lceil s\right\rceil } \right)\left\| u_{m} -u\right\| _{L^{\bar{\gamma }\left(r_{0} \right)} \left(I,\;L^{a_{0} } \right)}^{\sigma +1-\left\lceil s\right\rceil }\\
&~~~~~~+\left(\left\| u_{m} \right\| _{L^{\gamma \left(r_{1} \right)} \left(I,\;H_{r_{1} }^{s} \right)}^{\left\lceil s\right\rceil } +\left\| u\right\| _{L^{\gamma \left(r_{1} \right)} \left(I,\;H_{r_{1} }^{s} \right)}^{\left\lceil s\right\rceil } \right)\left\| u_{m} -u\right\| _{L^{\bar{\gamma }\left(r_{1} \right)} \left(I,\;L^{a_{1} } \right)}^{\sigma +1-\left\lceil s\right\rceil } .
\end{split}\end{eqnarray}
For $i=0,\, 1$, we can take $\eta _{i} >0$ sufficiently small such that
\begin{equation} \label{GrindEQ__3_87_}
2<\eta _{i} +r_{i} <\frac{2n}{n-2} ,~\bar{\gamma }\left(r_{i} \right)<\gamma \left(\eta _{i} +r_{i} \right)<\gamma \left(r_{i} \right),~s-\frac{\eta _{i} n}{r_{i} \left(r_{i} +\eta _{i} \right)} >0,
\end{equation}
since $2<r_{i} <\frac{2n}{n-2} $, $\bar{\gamma }\left(r_{i} \right)<\gamma \left(r_{i} \right)$ and $s>0$. It follows from Theorem 1.7, (3.87) and the embedding $H_{r_{i} +\eta }^{s-\frac{\eta _{i} n}{r_{i} \left(r_{i} +\eta _{i} \right)} } \subset L^{a_{i} } $ that as $m\to \infty $,
\begin{equation} \label{GrindEQ__3_88_}
\left\| u_{m} -u\right\| _{L^{\bar{\gamma }\left(r_{i} \right)} \left(I,\;L^{a_{i} } \right)} \lesssim T^{\alpha _{i} } \left\| u_{m} -u\right\| _{L^{\gamma \left(r_{i} +\eta _{i} \right)} \left(I,\, H_{r_{i} +\eta _{i} }^{s-\frac{\eta _{i} n}{r_{i} \left(r_{i} +\eta _{i} \right)} } \right)} \to 0,
\end{equation}
where $\alpha _{i} =\frac{1}{\bar{\gamma }\left(r_{i} \right)} -\frac{1}{\gamma \left(\eta _{i} +r_{i} \right)} >0$. If we take $T>0$ such that $C\left(T^{\theta _{1} } +T^{\theta _{2} } \right)M^{\sigma } \le 1/2$, we can deduce from (3.86) and (3.88) that $\left\| u_{m} -u\right\| _{S\left(I,\;H^{s} \right)} \to 0$, as $m\to \infty $.
\end{proof}

\noindent
\subsection{Case 2. $\frac{n}{2} \le s<\min \left\{\frac{n}{2} +1,\, n\right\}$}

\begin{lem}\label{lem 3.8.}
Let $n=1$, $2<r<\infty $, $\frac{n}{2} \le s<n$ and $\sigma \ge 1$. Assume that $f\in C^{2} \left(\C\to \C\right)$ satisfies
\begin{equation} \label{GrindEQ__3_89_}
\left|f^{\left(k\right)} \left(u\right)\right|\lesssim\left|u\right|^{\sigma +1-k} ,
\end{equation}
for any $0\le k\le 2$ and $u\in \C$. Then we have
\begin{equation} \label{GrindEQ__3_90_}
\left\| f(u)-f(v)\right\| _{\dot{H}_{r}^{s} } \lesssim\left(\left\| u\right\| _{\dot{H}_{r}^{s} }^{\sigma } +\left\| v\right\| _{\dot{H}_{r}^{s} }^{\sigma } \right)\left\| u-v\right\| _{\dot{H}_{r}^{s} } .
\end{equation}
\end{lem}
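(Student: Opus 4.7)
The plan is to extend the scheme of Lemma 3.1 to the regime $s\ge n/r$, where that lemma's "finite" Sobolev embedding $\dot H_r^s\hookrightarrow L^a$ with $a<\infty$ is unavailable (since $\tfrac{1}{r}-\tfrac{s}{n}\le 0$). The substitute, which is legitimate exactly because we are in dimension one and the hypotheses $s\ge \tfrac{1}{2}$, $r>2$ force $s>n/r=1/r$, will be the endpoint embedding $\dot H_r^s\hookrightarrow L^\infty$ on $\R$.

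First I would use the telescoping identity
\[
f(u)-f(v)=(u-v)\,H(u,v),\qquad H(u,v):=\int_0^1 f'\!\big(v+t(u-v)\big)\,dt,
\]
reducing as in Lemma 3.1 to the real case (the complex case being handled by the corresponding $f_z,f_{\bar z}$ splitting). Applying the fractional product rule (Lemma 2.1) with the two extremal admissible choices $(r_1,p_1)=(\infty,r)$ for the first slot and $(r_2,p_2)=(r,\infty)$ for the second, I obtain the Kato--Ponce split
\[
\|f(u)-f(v)\|_{\dot H_r^s}\lesssim \|u-v\|_{L^\infty}\,\|H\|_{\dot H_r^s}\;+\;\|u-v\|_{\dot H_r^s}\,\|H\|_{L^\infty}.
\]

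Next I would bound the two norms of $H$. The $L^\infty$ bound is immediate from (3.89) applied with $k=1$: $|H|\lesssim|u|^\sigma+|v|^\sigma$, whence $\|H\|_\infty\lesssim\|u\|_\infty^\sigma+\|v\|_\infty^\sigma$. For the $\dot H_r^s$ bound, I would pull the integral outside by Minkowski and then invoke the fractional chain rule (Lemma 2.3, applicable since $s\in(0,1)$) with the endpoint pair $(r_1,r_2)=(\infty,r)$, using the estimate $|f''|\lesssim|u|^{\sigma-1}$ from (3.89) with $k=2$; this yields
\[
\|H\|_{\dot H_r^s}\lesssim\int_0^1\|f''(v+t(u-v))\|_\infty\,\|v+t(u-v)\|_{\dot H_r^s}\,dt \lesssim (\|u\|_\infty+\|v\|_\infty)^{\sigma-1}\big(\|u\|_{\dot H_r^s}+\|v\|_{\dot H_r^s}\big).
\]

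Substituting both bounds into the Kato--Ponce split and finally invoking the one-dimensional endpoint Sobolev embedding $\dot H_r^s(\R)\hookrightarrow L^\infty(\R)$ (the homogeneous analogue of Lemma 2.5 (2), valid since $s>1/r$) to replace every $L^\infty$ norm on the right by a $\dot H_r^s$ norm, the desired estimate (3.90) follows. The main obstacle, and the reason this case must be separated from Lemma 3.1, is precisely the loss of the "finite" embedding $\dot H_r^s\hookrightarrow L^a$ when $s\ge n/2$: one is compelled onto the $L^\infty$ endpoint, which only makes sense thanks to the dimensional restriction $n=1$. Once this endpoint embedding is granted, the remainder of the argument is a routine product-rule plus chain-rule application in the spirit of Lemma 3.1.
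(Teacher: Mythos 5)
Your strategy --- the telescoping identity, the Kato--Ponce split with the endpoint exponents $(\infty,r)$ and $(r,\infty)$, the fractional chain rule with $\left\|f''\right\|_{\infty}$, and a Sobolev embedding into $L^{\infty}$ --- is exactly the paper's. The one place where you diverge is also where the argument breaks: the ``one-dimensional endpoint embedding $\dot{H}_{r}^{s}(\R)\hookrightarrow L^{\infty}(\R)$'' invoked at the end is false. The homogeneous seminorm does not control the sup norm: for a fixed bump $\chi$ and $u_{\lambda}(x)=\chi(x/\lambda)$ one has $\left\|u_{\lambda}\right\|_{\dot{H}_{r}^{s}}=\lambda^{1/r-s}\left\|\chi\right\|_{\dot{H}_{r}^{s}}\to 0$ as $\lambda\to\infty$ (since $s>1/r$), while $\left\|u_{\lambda}\right\|_{\infty}$ stays fixed; equivalently, constants are annihilated by the seminorm. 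What is true, and what the paper uses via Lemma 2.5\,(2), is the inhomogeneous embedding $H_{r}^{s}\subset L^{\infty}$.

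Consequently you cannot ``replace every $L^{\infty}$ norm on the right by a $\dot{H}_{r}^{s}$ norm''; only by an $H_{r}^{s}$ norm. That is what the paper does: its proof (see (3.91)--(3.92)) yields the estimate with $\left\|u\right\|_{H_{r}^{s}}$, $\left\|v\right\|_{H_{r}^{s}}$, $\left\|u-v\right\|_{H_{r}^{s}}$ on the right-hand side, and to get there it also needs the $L^{r}$-piece of the inner factor, $\left\|f'\left(v+t(u-v)\right)\right\|_{r}\lesssim\left\|u\right\|_{H_{r}^{s}}^{\sigma}+\left\|v\right\|_{H_{r}^{s}}^{\sigma}$, which your purely homogeneous bookkeeping omits. (The paper's displayed (3.90) with homogeneous norms on the right is itself not what its proof delivers; the inhomogeneous version is the one used downstream in Lemmas 3.12--3.15, where all right-hand sides carry $S(I,H^{s})$ norms.) So your outline matches the paper's route, but the final embedding step is a genuine gap; the repair is to work with $H_{r}^{s}=L^{r}\cap\dot{H}_{r}^{s}$ throughout and accept inhomogeneous norms in the conclusion.
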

\begin{proof} Since $r>2$, it follows from Lemma 2.5 that $H_{r}^{s}\subset L^{\infty}$. We use the argument similar to that used in Lemma 3.1 and we only sketch the proof. Since
\[f(u)-f(v)=\left(u-v\right)\int _{0}^{1}f'\left(v+t\left(u-v\right)\right)dt ,\]
it follows from (3.89), Lemma 2.1 (fractional product rule) and the embedding $H_{r}^{s}\subset L^{\infty}$ that
\begin{eqnarray}\begin{split}\label{GrindEQ__3_91_}
&\left\| f(u)-f(v)\right\| _{\dot{H}_{r}^{s} }=\left\| \left(u-v\right)\int _{0}^{1}f'\left(v+t\left(u-v\right)\right)dt \right\| _{\dot{H}_{r}^{s} }  \\
&~~~~~~~\lesssim \left\| \int _{0}^{1}f'\left(v+t\left(u-v\right)\right)dt \right\| _{\infty } \left\| u-v\right\| _{\dot{H}_{r}^{s} }+\left\| \int _{0}^{1}f'\left(v+t\left(u-v\right)\right)dt \right\| _{\dot{H}_{r}^{s} } \left\| u-v\right\| _{\infty } \\
&~~~~~~~\lesssim \left\| u-v\right\| _{H_{r}^{s} } \int _{0}^{1}\left\| f'\left(v+t\left(u-v\right)\right)\right\| _{H_{r}^{s}}dt
\end{split}\end{eqnarray}
Meanwhile, it follows from Lemma 2.3 (fractional chain rule), (3.89) and the embedding $H_{r}^{s}\subset L^{\infty}$ that
\begin{eqnarray}\begin{split}\nonumber
\left\| f'\left(v+t\left(u-v\right)\right)\right\| _{\dot{H}_{r}^{s} }\lesssim\left\| f''\left(v+t\left(u-v\right)\right)\right\| _{\infty } \left\| v+t\left(u-v\right)\right\| _{\dot{H}_{r}^{s}}\lesssim \left\| u\right\| _{H_{r}^{s} }^{\sigma } +\left\| v\right\| _{H_{r}^{s} }^{\sigma },
\end{split}\end{eqnarray}
for any $0\le t \le1$.
It also follows from (3.89) that
\begin{eqnarray}\begin{split}\nonumber
\left\| f'\left(v+t\left(u-v\right)\right)\right\| _{r}\lesssim\left\|\left| u\right|^{\sigma } +\left| v\right|^{\sigma }\right\|_{r}\lesssim\left\| u\right\| _{H_{r}^{s} }^{\sigma } +\left\| v\right\| _{H_{r}^{s} }^{\sigma }.
\end{split}\end{eqnarray}
for any $0\le t \le1$. Thus we have
\begin{eqnarray}\begin{split}\label{GrindEQ__3_92_}
\left\| f'\left(v+t\left(u-v\right)\right)\right\| _{H_{r}^{s} }\lesssim\left\| u\right\| _{H_{r}^{s} }^{\sigma } +\left\| v\right\| _{H_{r}^{s} }^{\sigma },
\end{split}\end{eqnarray}
for any $0\le t \le1$. In view of (3.91) and (3.92), we have (3.90).
\end{proof}
\begin{lem}\label{lem 3.9.}
Let $n\ge2$, $\max \left\{1,\, \frac{n}{2} \right\} \le s < \min \left\{n,\, \frac{n}{2} +1\right\}$ and $\sigma >\left\lceil s\right\rceil -1$. Assume that $f\in C^{\left\lceil s\right\rceil } \left(\C\to \C\right)$ satisfies $(3.15)$ and $(3.16)$.
Suppose also that one of the following conditions is satisfied:

$\cdot$ $s\in\N$ and $2<r<\infty$,

$\cdot$ $s \notin \N$, $n\ge 3$ and $2<r<n$,

$\cdot$ $s \notin \N$, $n=2$ and $r=2$.\\
Then we have
\begin{eqnarray}\begin{split} \label{GrindEQ__3_93_}
\left\| f(u)-f(v)\right\| _{\dot{H}_{r}^{s} } \lesssim&\left\| u-v\right\| _{\infty }^{\min \{ \sigma -\left\lceil s\right\rceil +1,\;1\} } \left(\left\| u\right\| _{H_{r}^{s} }^{\max \{ \left\lceil s\right\rceil ,\;\sigma \} } +\left\| v\right\| _{H_{r}^{s} }^{\max \{ \left\lceil s\right\rceil ,\;\sigma \} } \right)\\
&+\left(\left\| u\right\| _{H_{r}^{s} }^{\sigma } +\left\| v\right\| _{H_{r}^{s} }^{\sigma } \right)\left\| u-v\right\| _{H_{r}^{s} }.
\end{split}\end{eqnarray}
Moreover, if $\sigma \ge \left\lceil s\right\rceil $, then we have
\begin{equation} \label{GrindEQ__3_94_}
\left\| f(u)-f(v)\right\| _{\dot{H}_{r}^{s} } \lesssim\left(\left\| u\right\| _{H_{r}^{s} }^{\sigma } +\left\| v\right\| _{H_{r}^{s} }^{\sigma } \right)\left\| u-v\right\| _{H_{r}^{s} } .
\end{equation}
\end{lem}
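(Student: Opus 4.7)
The plan is to adapt the proof of Lemma 3.2 to the regime $s\ge n/2$. The key observation is that in every one of the three subcases stated, $s>n/r$ holds, so Lemma 2.5 yields the embedding $H^s_r\subset L^\infty$; this will take over the role played by the Sobolev embedding $\dot H^s_r\subset L^a$ in Lemma 3.2. I would first apply Lemma 2.7 to reduce to
\[\|f(u)-f(v)\|_{\dot H^s_r}\lesssim \sum_{|\alpha|=[s]}\|D^\alpha f(u)-D^\alpha f(v)\|_{\dot H^v_r},\qquad v=s-[s],\]
then expand $D^\alpha f(u)-D^\alpha f(v)$ via the Leibniz rule to produce the two typical terms
\[I_1=(f^{(q)}(u)-f^{(q)}(v))\prod_{i=1}^q D^{\alpha_i}u,\qquad II_1=f^{(q)}(v)\Bigl(\prod_{i=1}^q D^{\alpha_i}u-\prod_{i=1}^q D^{\alpha_i}v\Bigr),\]
exactly as in (3.20)--(3.21). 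The analysis then splits into an integer and a non-integer case, mirroring Cases 1 and 2 of Lemma 3.2.

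For Case 1 ($s\in\N$, $v=0$, first subcase) I would estimate $\|I_1\|_r$ and $\|II_1\|_r$ by H\"older with the split $1/r=1/\infty+1/r$. The factor $\|f^{(q)}(u)-f^{(q)}(v)\|_\infty$ is controlled, when $q=\lceil s\rceil$, by (3.16) together with $\|u\|_\infty\lesssim\|u\|_{H^s_r}$ (producing the $\|u-v\|_\infty^{\min\{\sigma-\lceil s\rceil+1,1\}}$ piece of (3.93)), and, when $q<\lceil s\rceil$, by the integral representation $(u-v)\int_0^1 f^{(q+1)}(v+t(u-v))\,dt$ combined with (3.15). For the product of derivatives I would invoke the Gagliardo--Nirenberg interpolation
\[\|D^{\alpha_i}u\|_{sr/|\alpha_i|}\lesssim \|u\|_\infty^{1-|\alpha_i|/s}\|u\|_{\dot H^s_r}^{|\alpha_i|/s},\]
whose exponents are H\"older-compatible because $\sum_i|\alpha_i|/s=1$, yielding $\|\prod_i D^{\alpha_i}u\|_r\lesssim \|u\|_{H^s_r}^q$. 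The identity (3.30) then reduces $II_1$ to a sum involving $D^{\alpha_i}(u-v)$, controlled by the same product bound together with $\|f^{(q)}(v)\|_\infty\lesssim \|v\|_{H^s_r}^{\sigma+1-q}$.

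For Case 2 ($s\notin\N$, $v\in(0,1)$, second and third subcases) I would apply the fractional product rule (Lemma 2.1) to write
\[\|I_1\|_{\dot H^v_r}\lesssim \|f^{(q)}(u)-f^{(q)}(v)\|_\infty\,\|{\textstyle\prod_i} D^{\alpha_i}u\|_{\dot H^v_r}+\|f^{(q)}(u)-f^{(q)}(v)\|_{\dot H^v_{r_2}}\,\|{\textstyle\prod_i} D^{\alpha_i}u\|_{p_2},\]
where $1/r=1/r_2+1/p_2$. The first product factor is bounded via Corollary 2.2 together with a Gagliardo--Nirenberg interpolation as in Case 1, this time with one factor placed in $\dot H^{v+|\alpha_k|}_{rs/(v+|\alpha_k|)}$; the exponents again balance (total $q-1$ on $\|u\|_\infty$ and $1$ on $\|u\|_{\dot H^s_r}$) and produce $\lesssim \|u\|_{H^s_r}^q$. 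The factor $\|f^{(q)}(u)-f^{(q)}(v)\|_{\dot H^v_{r_2}}$ is reduced to a first-order derivative through a Sobolev embedding $\dot H^1_{p_6}\subset \dot H^v_{r_2}$ (as in (3.37) of Lemma 3.2) and then handled by the ordinary chain rule with (3.15)/(3.16). The term $II_1$ is treated analogously using (3.30).

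The main obstacle is Case 2: the homogeneous embedding $\dot H^v_r\subset L^\infty$ fails when $r<n$, which is exactly the regime imposed by the hypothesis, so the naive splitting $\|I_1\|_{\dot H^v_r}\lesssim \|f^{(q)}(u)-f^{(q)}(v)\|_{\dot H^v_r}\,\|\prod D^{\alpha_i}u\|_\infty$ is unavailable. One circumvents this by the judicious interpolation described above, distributing the $L^\infty$ norm among \emph{lower-order} factors only; this is precisely why the restrictions $2<r<n$ (for $n\ge 3$) and $r=2$ (for $n=2$) are needed, as they guarantee the validity of the intermediate Lebesgue and Sobolev exponents $r_2,p_2,p_6,\ldots$. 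Finally, the improved bound (3.94) follows from (3.93) by $\|u-v\|_\infty\lesssim\|u-v\|_{H^s_r}$ together with $\sigma\ge\lceil s\rceil$, which makes $\min\{\sigma-\lceil s\rceil+1,1\}=1$ and $\max\{\lceil s\rceil,\sigma\}=\sigma$, so that the first term of (3.93) is absorbed into the second.
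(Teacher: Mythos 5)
Your proposal is correct and follows essentially the same route as the paper: reduction via Lemma 2.7 and the Leibniz rule to the terms $I_{1}$, $II_{1}$, separate treatment of $s\in\N$ and $s\notin\N$, the fractional product rule with the $L^{\infty}$ factor supplied by $H_{r}^{s}\subset L^{\infty}$, reduction of $\left\| f^{(q)}(u)-f^{(q)}(v)\right\|_{\dot{H}^{v}}$ to a first-order derivative through an auxiliary Sobolev embedding (which is exactly where the restriction $r\le n$ enters), and deduction of (3.94) from (3.93). The only cosmetic difference is that where you invoke Gagliardo--Nirenberg interpolation with the $L^{\infty}$ endpoint to bound $\left\| D^{\alpha_{i}}u\right\|_{rs/\left|\alpha_{i}\right|}$, the paper uses the embedding $H_{r}^{s}\subset H_{b_{i}}^{\left|\alpha_{i}\right|}$ with $1/b_{i}=\left|\alpha_{i}\right|/(rs)$ from Corollary 2.6, which yields the same bound $\left\|\prod_{i}D^{\alpha_{i}}u\right\|\lesssim\left\| u\right\|_{H_{r}^{s}}^{q}$.
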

\begin{proof}
Using the same argument as in the proof of Lemma 3.2, it suffices to estimate $\left\| I_{1} \right\| _{\dot{H}_{r}^{v} }$ and $ \left\| II_{1} \right\| _{\dot{H}_{r}^{v} }$,
where $v=s-\left[s\right]$ and $I_{1} $, $II_{1} $ are given by
\[I_{1} =\left(f^{\left(q\right)} \left(u\right)-f^{\left(q\right)} \left(v\right)\right)\prod _{i=1}^{q}D^{\alpha _{i} } u ,~II_{1} =f^{\left(q\right)} \left(v\right)\left(\prod _{i=1}^{q}D^{\alpha _{i} } u -\prod _{i=1}^{q}D^{\alpha _{i} } v \right),\]
for $\left|\alpha _{1} \right|+\cdots +\left|\alpha _{q} \right|=\left[s\right]$, $\left|\alpha _{i} \right|\ge 1$, $\left[s\right]\ge q\ge 1$.
For $1\le i\le q$, putting $\frac{1}{b_{i} } :=\frac{\left|\alpha _{i} \right|}{rs} $, we can see that
\[\left|\alpha _{i} \right|-\frac{n}{b_{i} } =\frac{\left|\alpha _{i} \right|}{s} \left(s-\frac{n}{r} \right)\le s-\frac{n}{r}, \]
and $r\le b_{i} $. Hence using Corollary 2.6, we have the embedding $H_{r}^{s} \subset H_{b_{i} }^{\left|\alpha _{i} \right|} $.

We divide the study in two cases: $s\in\N$ and $s\notin \N$.

\textbf{Case 1.} $s\in\N$.
Since $r>2$, there holds the embedding $H_{r}^{s}\subset L^{\infty}$.

\emph{Step 1.1.} First, we estimate $\left\| I_{1} \right\| _{r} $. Using H\"{o}lder inequality, we have
\begin{eqnarray}\begin{split}\nonumber
\left\| I_{1} \right\| _{r}&=\left\| \left(f^{\left(q\right)} \left(u\right)-f^{\left(q\right)} \left(v\right)\right)\prod _{i=1}^{q}D^{\alpha _{i} } u \right\| _{r} \le \left\| f^{\left(q\right)} \left(u\right)-f^{\left(q\right)} \left(v\right)\right\| _{\infty } \prod _{i=1}^{q}\left\| D^{\alpha _{i} } u\right\| _{b_{i}} \\
&\lesssim \left\| f^{\left(q\right)} \left(u\right)-f^{\left(q\right)} \left(v\right)\right\| _{\infty } \prod _{i=1}^{q}\left\| u\right\| _{H_{b_{i} }^{\left|\alpha _{i} \right|} }  \lesssim \left\| f^{\left(q\right)} \left(u\right)-f^{\left(q\right)} \left(v\right)\right\| _{\infty } \left\| u\right\| _{H_{r}^{s} }^{q},
\end{split}\end{eqnarray}
If $q=\left|\alpha \right|=\left\lceil s\right\rceil $, then it follows from (3.16), H\"{o}lder inequality and the embedding $H_{r}^{s} \subset L^{\infty } $ that
\begin{eqnarray}\begin{split}\nonumber
&\left\| f^{\left(\left\lceil s\right\rceil \right)} \left(u\right)-f^{\left(\left\lceil s\right\rceil \right)} \left(v\right)\right\| _{\infty } \lesssim\left\| \left|u-v\right|^{\min \{ \sigma -\left\lceil s\right\rceil +1,\;1\} } \left(\left|u\right|+\left|v\right|\right)^{\max \{ 0,\;\sigma -\left\lceil s\right\rceil \} } \right\| _{\infty } \\
&~~~~~~~~~~~~\lesssim\left\| u-v\right\| _{\infty }^{\min \{ \sigma -\left\lceil s\right\rceil +1,\;1\} } \left(\left\| u\right\| _{H_{r}^{s} }^{\max \{ 0,\;\sigma -\left\lceil s\right\rceil \} } +\left\| v\right\| _{H_{r}^{s} }^{\max \{ 0,\;\sigma -\left\lceil s\right\rceil \} } \right).
\end{split}\end{eqnarray}
If $q<\left|\alpha \right|=\left\lceil s\right\rceil $, it follows from (3.15) that
\begin{eqnarray}\begin{split}\nonumber
\left\| f^{\left(q\right)} \left(u\right)-f^{\left(q\right)} \left(v\right)\right\| _{\infty } &=\left\| \left(u-v\right)\int _{0}^{1}f^{\left(q+1\right)} \left(v+t\left(u-v\right)\right)dt \right\| _{\infty }\\
&\lesssim\left\| u-v\right\| _{\infty } \left(\left\| u\right\| _{H_{r}^{s} }^{\sigma -q} +\left\| v\right\| _{H_{r}^{s} }^{\sigma -q} \right).
\end{split}\end{eqnarray}
Thus we have
\[\left\| I_{1} \right\| _{r} \lesssim\left\| u-v\right\| _{\infty }^{\min \{ \sigma -\left\lceil s\right\rceil +1,\;1\} } \left(\left\| u\right\| _{H_{r}^{s} }^{\max \{ \left\lceil s\right\rceil ,\;\sigma \} } +\left\| v\right\| _{H_{r}^{s} }^{\max \{ \left\lceil s\right\rceil ,\;\sigma \} } \right)+\left(\left\| u\right\| _{H_{r}^{s} }^{\sigma } +\left\| v\right\| _{H_{r}^{s} }^{\sigma } \right)\left\| u-v\right\| _{H_{r}^{s} } ,\]
for any $1\le q\le s=\left\lceil s\right\rceil $.

\emph{Step 1.2.} Next, we estimate $\left\| II_{1} \right\| _{r} $. It follows from (3.30) that
\[\left\| II_{1} \right\| _{r} \le \sum _{i=1}^{q}\left\| f^{\left(q\right)} \left(v\right)\left(\prod _{j=1}^{i-1}D^{\alpha _{j} } u \prod _{j=i+1}^{q}D^{\alpha _{j} } v \left(D^{\alpha _{i} } u-D^{\alpha _{i} } v\right)\right)\right\| _{r}  .\]
It follows from (3.15), H\"{o}lder inequality and the embedding $H_{r}^{s} \subset H_{b_{i} }^{\left|\alpha _{i} \right|} $ that
\begin{eqnarray}\begin{split}\nonumber
&\left\| f^{\left(q\right)}\left(v\right)\left(\prod _{j=1}^{i-1}D^{\alpha _{j} } u \prod _{j=i+1}^{q}D^{\alpha _{j}} v \left(D^{\alpha _{i} } u-D^{\alpha _{i} } v\right)\right)\right\| _{r}\\
&~~~~~~~~~~~~~~~\lesssim \left\| u\right\|_{\infty }^{\sigma +1-q} \left\| u-v\right\| _{\dot{H}_{b_{i} }^{\left|\alpha _{i} \right|}} \prod _{j=1}^{i-1}\left\| u\right\| _{\dot{H}_{b_{j} }^{\left|\alpha _{j} \right|}}  \prod _{j=i+1}^{q}\left\| v\right\| _{\dot{H}_{b_{j} }^{\left|\alpha _{j} \right|}}\\
&~~~~~~~~~~~~~~~\lesssim\left(\left\| u\right\| _{H_{r}^{s}}^{\sigma } +\left\| v\right\| _{H_{r}^{s} }^{\sigma }\right)\left\| u-v\right\| _{H_{r}^{s}} .
\end{split}\end{eqnarray}
Thus we have
\[\left\| II_{1} \right\| _{r} \lesssim\left(\left\| u\right\| _{H_{r}^{s} }^{\sigma } +\left\| v\right\| _{H_{r}^{s} }^{\sigma } \right)\left\| u-v\right\| _{H_{r}^{s} } ,\]
this completes the proof of (3.93) in the case: $s\in\N$ and $2<r<\infty$.

\textbf{Case 2.} $s \notin \N$.
By assumption of this lemma, there are two possible cases:

$\cdot$ $s \notin \N$, $n\ge 3$ and $2<r<n$,

$\cdot$ $s \notin \N$, $n=2$ and $r=2$.\\
Since $s>n/r$ in both cases, there holds the embedding $H_{r}^{s}\subset L^{\infty}$.

\emph{Step 2.1.} First, we estimate $\left\| I_{1} \right\| _{\dot{H}_{r}^{v} } $.
It follows from Lemma 2.1 (fractional product rule) that
\begin{eqnarray}\begin{split} \label{GrindEQ__3_95_}
\left\| I_{1} \right\| _{\dot{H}_{r}^{v} }&\lesssim \left\| f^{\left(q\right)} \left(u\right)-f^{\left(q\right)} \left(v\right)\right\| _{\dot{H}_{\bar{r}_{1} }^{v} } \left\| \prod _{i=1}^{q}D^{\alpha _{i} } u \right\| _{\bar{q}_{1} } +\left\| f^{\left(q\right)} \left(u\right)-f^{\left(q\right)} \left(v\right)\right\| _{\infty } \left\| \prod _{i=1}^{q}D^{\alpha _{i} } u \right\| _{\dot{H}_{r}^{v} }\\
&\equiv I_{3} +II_{3} ,
\end{split}\end{eqnarray}
where
\begin{equation} \label{GrindEQ__3_96_}
\frac{1}{\bar{r}_{1} } =\frac{v}{rs} ,~\frac{1}{\bar{q}_{1} } =\frac{\left[s\right]}{rs} =\sum _{i=1}^{q}\frac{1}{b_{i} }  .
\end{equation}
First, we estimate $I_{3} $. Putting $\frac{1}{\bar{r}_{2} } =\frac{1}{\bar{r}_{1} } -\frac{v}{n} +\frac{1}{n} $, we can see that
\[0<\frac{1}{\bar{r}_{2} } =\frac{v}{rs} -\frac{v}{n} +\frac{1}{n} <\frac{1}{n} \le\frac{1}{r},\]
which implies that $r<\bar{r}_{2} <\infty $. Noticing that
\[1-\frac{1}{\bar{r}_{2} } =v-\frac{n}{\bar{r}_{1} } =\frac{v}{s} \left(s-\frac{n}{r} \right)<s-\frac{n}{r},~r<\bar{r}_{2},~r<\bar{r}_{1}, \]
and using Corollary 2.6, we have the embeddings: $H_{\bar{r}_{2} }^{1} \subset H_{r}^{s} $ and $H_{\bar{r}_{1} }^{v} \subset H_{r}^{s} $. Thus we have
\begin{eqnarray}\begin{split}\label{GrindEQ__3_97_}
&\left\| f^{\left(q\right)} \left(u\right)-f^{\left(q\right)} \left(v\right)\right\| _{\dot{H}_{\bar{r}_{1} }^{v} } \lesssim\left\| f^{\left(q\right)} \left(u\right)-f^{\left(q\right)} \left(v\right)\right\| _{\dot{H}_{\bar{r}_{2} }^{1} } =\sum _{i=1}^{n}\left\| \partial _{x_{i} } \left(f^{\left(q\right)} \left(u\right)-f^{\left(q\right)} \left(v\right)\right)\right\| _{\bar{r}_{2}}\\
&~~~~~=\sum _{i=1}^{n}\left\| f^{\left(q+1\right)} \left(u\right)\partial _{x_{i} } u-f^{\left(q+1\right)} \left(v\right)\partial _{x_{i} } v\right\| _{\bar{r}_{2}}\\
&~~~~~\le \sum _{i=1}^{n}\left\| \left(f^{\left(q+1\right)} \left(u\right)-f^{\left(q+1\right)} \left(v\right)\right)\partial _{x_{i} } u\right\| _{\bar{r}_{2} } +\left\| f^{\left(q+1\right)} \left(v\right)\left(\partial _{x_{i} } u-\partial _{x_{i} } v\right)\right\| _{\bar{r}_{2}}\\
&~~~~~\lesssim \sum _{i=1}^{n}\left\| f^{\left(q+1\right)} \left(u\right)-f^{\left(q+1\right)} \left(v\right)\right\| _{\infty } \left\| \partial _{x_{i} } u\right\| _{\bar{r}_{2} } +\left\| f^{\left(q+1\right)} \left(v\right)\right\| _{\infty } \left\| \partial _{x_{i} } u-\partial _{x_{i} } v\right\| _{\bar{r}_{2} }\\
&~~~~~\lesssim\sum _{i=1}^{n}\left\| f^{\left(q+1\right)} \left(u\right)-f^{\left(q+1\right)} \left(v\right)\right\| _{\infty } \left\| u\right\| _{H_{r}^{s} } +\left\| f^{\left(q+1\right)} \left(v\right)\right\| _{\infty } \left\| u-v\right\| _{H_{r}^{s} }.
\end{split}\end{eqnarray}
If $q=\left[s\right]=\left\lceil s\right\rceil -1$, it follows from (3.16), H\"{o}lder inequality and the embedding $H_{r}^{s} \subset L^{\infty } $ that
\begin{equation} \label{GrindEQ__3_98_}
\left\| f^{\left(\left\lceil s\right\rceil \right)} \left(u\right)-f^{\left(\left\lceil s\right\rceil \right)} \left(v\right)\right\| _{\infty } \lesssim\left\| u-v\right\| _{\infty }^{\min \{ \sigma -\left\lceil s\right\rceil +1,\;1\} } \left(\left\| u\right\| _{H_{r}^{s} }^{\max \{ 0,\;\sigma -\left\lceil s\right\rceil \} } +\left\| v\right\| _{H_{r}^{s} }^{\max \{ 0,\;\sigma -\left\lceil s\right\rceil \} } \right).
\end{equation}
If $q<\left[s\right]=\left\lceil s\right\rceil -1$, it follows from (3.15) that
\begin{eqnarray}\begin{split} \label{GrindEQ__3_99_}
\left\| f^{\left(q+1\right)} \left(u\right)-f^{\left(q+1\right)} \left(v\right)\right\| _{\infty } &=\left\| \left(u-v\right)\int _{0}^{1}f^{\left(q+2\right)} \left(v+t\left(u-v\right)\right)dt \right\| _{\infty }\\
&\lesssim\left\| u-v\right\| _{H_{r}^{s} } \left(\left\| u\right\| _{H_{r}^{s} }^{\sigma -q-1} +\left\| v\right\| _{H_{r}^{s} }^{\sigma -q-1} \right).
\end{split}\end{eqnarray}
We can also deduce from (3.15) that
\begin{equation} \label{GrindEQ__3_100_}
\left\| f^{\left(q+1\right)} \left(v\right)\right\| _{\infty } \le \left\| v\right\| _{\infty }^{\sigma -q} \lesssim\left\| v\right\| _{H_{r}^{s} }^{\sigma -q} .
\end{equation}
We also have
\begin{equation} \label{GrindEQ__3_101_}
\left\| \prod _{i=1}^{q}D^{\alpha _{i} } u \right\| _{\bar{q}_{1} } \le \prod _{i=1}^{q}\left\| D^{\alpha _{i} } u\right\| _{b_{i} }  \lesssim\left\| u\right\| _{H_{r}^{s} }^{q} .
\end{equation}
In view of (3.97)--(3.101), we have
\begin{eqnarray}\begin{split}\nonumber
I_{3}&=\left\| f^{\left(q\right)} \left(u\right)-f^{\left(q\right)} \left(v\right)\right\| _{\dot{H}_{\bar{r}_{1} }^{v} } \left\| \prod _{i=1}^{q}D^{\alpha _{i} } u \right\| _{\bar{q}_{1} } \\
&\lesssim\left\| u-v\right\| _{\infty }^{\min \{ \sigma -\left\lceil s\right\rceil +1,\;1\} } \left(\left\| u\right\| _{H_{r}^{s} }^{\max \{ \left\lceil s\right\rceil ,\;\sigma \} } +\left\| v\right\| _{H_{r}^{s} }^{\max \{ \left\lceil s\right\rceil ,\;\sigma \} } \right)+\left(\left\| u\right\| _{\dot{H}_{r}^{s} }^{\sigma } +\left\| v\right\| _{H_{r}^{s} }^{\sigma } \right)\left\| u-v\right\| _{H_{r}^{s} }.
\end{split}\end{eqnarray}
Combining the argument used in the case 1 with that used in the proof of Lemma 3.9 of \cite{AK21}, we can prove
\begin{eqnarray}\begin{split}\nonumber
II_{3}&=\left\| f^{\left(q\right)} \left(u\right)-f^{\left(q\right)} \left(v\right)\right\| _{\infty } \left\| \prod _{i=1}^{q}D^{\alpha _{i} } u \right\| _{\dot{H}_{r}^{v}}\lesssim \left(\left\| u\right\| _{H_{r}^{s} }^{\sigma } +\left\| v\right\| _{H_{r}^{s} }^{\sigma }\right)\left\| u-v\right\| _{H_{r}^{s} },
\end{split}\end{eqnarray}
whose proof will be omitted.
Thus we have
\begin{eqnarray}\begin{split}\nonumber
\left\| I_{1} \right\| _{\dot{H}_{r}^{v} }&\lesssim I_{3} +II_{3}\\
&\lesssim\left\| u-v\right\| _{\infty }^{\min \{ \sigma -\left\lceil s\right\rceil +1,\;1\} } \left(\left\| u\right\| _{H_{r}^{s} }^{\max \{ \left\lceil s\right\rceil ,\;\sigma \} } +\left\| v\right\| _{H_{r}^{s} }^{\max \{ \left\lceil s\right\rceil ,\;\sigma \} } \right)+\left(\left\| u\right\| _{H_{r}^{s} }^{\sigma } +\left\| v\right\| _{H_{r}^{s} }^{\sigma } \right)\left\| u-v\right\| _{H_{r}^{s} }
\end{split}\end{eqnarray}

\emph{Step 2.2.} Next, we estimate $\left\| II_{1} \right\| _{\dot{H}_{p}^{v} } $.
 It follows from Lemma 2.1 (fractional product rule) that
\begin{equation} \label{GrindEQ__3_102_}
\left\| II_{1} \right\| _{\dot{H}_{r}^{v} } \lesssim \left\| f^{\left(q\right)} \left(v\right)\right\| _{\dot{H}_{\bar{r}_{1} }^{v} } \left\| \prod _{i=1}^{q}D^{\alpha _{i} } u -\prod _{i=1}^{q}D^{\alpha _{i} } v \right\| _{\bar{q}_{1} } +\left\| f^{\left(q\right)} \left(v\right)\right\| _{\infty } \left\| \prod _{i=1}^{q}D^{\alpha _{i} } u -\prod _{i=1}^{q}D^{\alpha _{i} } v \right\| _{\dot{H}_{r}^{v} } ,
\end{equation}
where $\bar{r}_{1} $ and $\bar{q}_{1} $ are given in (3.96). Using Lemma 2.3 (fractional chain rule), (3.15) and the embedding $H_{\bar{r}_{1} }^{v} \subset H_{r}^{s} $, we have
\begin{equation} \label{GrindEQ__3_103_}
\left\| f^{\left(q\right)} \left(v\right)\right\| _{\dot{H}_{\bar{r}_{1} }^{v} } \lesssim\left\| f^{\left(q+1\right)} \left(v\right)\right\| _{\infty } \left\| v\right\| _{\dot{H}_{\bar{r}_{1} }^{v} } \lesssim\left\| v\right\| _{H_{r}^{s} }^{\sigma -q+1} .
\end{equation}
It also follows from (3.30) that
\begin{equation} \label{GrindEQ__3_104_}
\left\| \prod _{i=1}^{q}D^{\alpha _{i} } u -\prod _{i=1}^{q}D^{\alpha _{i} } v \right\| _{\bar{q}_{1} } \le \sum _{i=1}^{q}\left\| \prod _{j=1}^{i-1}D^{\alpha _{j} } u \prod _{j=i+1}^{q}D^{\alpha _{j} } v \left(D^{\alpha _{i} } u-D^{\alpha _{i} } v\right)\right\| _{\bar{q}_{1} }  .
\end{equation}
For any $1\le i\le q$, it follows from H\"{o}lder inequality and the embedding $H_{r}^{s} \subset H_{b_{i} }^{\left|\alpha _{i} \right|} $ that
\begin{equation} \label{GrindEQ__3_105_}
\left\| \prod _{j=1}^{i-1}D^{\alpha _{j} } u \prod _{j=i+1}^{q}D^{\alpha _{j} } v \left(D^{\alpha _{i} } u-D^{\alpha _{i} } v\right)\right\| _{\bar{q}_{1} } \lesssim\left(\left\| u\right\| _{H_{r}^{s} }^{q-1} +\left\| v\right\| _{H_{r}^{s} }^{q-1} \right)\left\| u-v\right\| _{H_{r}^{s} } .
\end{equation}
In view of (3.103)--(3.105), we have
\begin{equation} \label{GrindEQ__3_106_}
\left\| f^{\left(q\right)} \left(v\right)\right\| _{\dot{H}_{\bar{r}_{1} }^{v} } \left\| \prod _{i=1}^{q}D^{\alpha _{i} } u -\prod _{i=1}^{q}D^{\alpha _{i} } v \right\| _{\bar{q}_{1} } \lesssim\left(\left\| u\right\| _{H_{r}^{s} }^{\sigma } +\left\| v\right\| _{H_{r}^{s} }^{\sigma } \right)\left\| u-v\right\| _{H_{r}^{s} } .
\end{equation}
Combining the above argument with that used to estimate $\bar{B}_{2} $ in Lemma 3.9 of \cite{AK21}, we can also prove
\begin{equation} \label{GrindEQ__3_107_}
\left\| f^{\left(q\right)} \left(v\right)\right\| _{\infty } \left\| \prod _{i=1}^{q}D^{\alpha _{i} } u -\prod _{i=1}^{q}D^{\alpha _{i} } v \right\| _{\dot{H}_{r}^{v} } \lesssim\left(\left\| u\right\| _{H_{r}^{s} }^{\sigma } +\left\| v\right\| _{H_{r}^{s} }^{\sigma } \right)\left\| u-v\right\| _{H_{r}^{s} } ,
\end{equation}
whose proof will be omitted. In view of (3.102), (3.106) and (3.107), we have
\[\left\| II_{1} \right\| _{\dot{H}_{r}^{v} } \lesssim\left\| u-v\right\| _{\infty }^{\min \{ \sigma -\left\lceil s\right\rceil +1,\;1\} } \left(\left\| u\right\| _{H_{r}^{s} }^{\max \{ \left\lceil s\right\rceil ,\;\sigma \} } +\left\| v\right\| _{H_{r}^{s} }^{\max \{ \left\lceil s\right\rceil ,\;\sigma \} } \right)+\left(\left\| u\right\| _{H_{r}^{s} }^{\sigma } +\left\| v\right\| _{H_{r}^{s} }^{\sigma } \right)\left\| u-v\right\| _{H_{r}^{s} } ,\]
this completes the proof of (3.93) in the case: $s \notin \N$.

\noindent If $\sigma \ge \left\lceil s\right\rceil $, then (3.94) follows directly from (3.93) and the embedding $H_{r}^{s} \subset L^{\infty } $.
\end{proof}

\begin{rem}\label{rem 3.10.}
\textnormal{If $f\left(z\right)$ is a polynomial in $z$ and $\bar{z}$ satisfying $1<\deg \left(f\right)=1+\sigma $, we can see that the assumption $\sigma >\left\lceil s\right\rceil -1$ in Lemma 3.9 can be removed and that there holds
\[\left\| f(u)-f(v)\right\| _{\dot{H}_{r}^{s} } \lesssim\left(\left\| u\right\| _{H_{r}^{s} }^{\sigma } +\left\| v\right\| _{H_{r}^{s} }^{\sigma } \right)\left\| u-v\right\| _{H_{r}^{s} },\]
by using the same argument as in the proof of Lemma 3.3}
\end{rem}
\begin{lem}\label{lem 3.11.}
Let $\sigma >0$. Assume that $f\in C^{1} \left(\C\to \C\right)$ satisfies \textnormal{(3.68)}. Suppose also that one of the following two conditions is satisfied:

$\cdot s\ge n/2$ and $r>2$,

$\cdot s>n/2$ and $r=2$.\\
Then we have
\[\left\| f(u)-f(v)\right\| _{r} \lesssim\left(\left\| u\right\| _{H_{r}^{s} }^{\sigma } +\left\| v\right\| _{H_{r}^{s} }^{\sigma } \right)\left\| u-v\right\| _{H_{r}^{s} } ,\]
\[\left\| f(u)-f(v)\right\| _{\infty} \lesssim\left(\left\| u\right\| _{H_{r}^{s} }^{\sigma } +\left\| v\right\| _{H_{r}^{s} }^{\sigma } \right)\left\| u-v\right\| _{H_{r}^{s} }.\]
\end{lem}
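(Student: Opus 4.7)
The plan is to reduce everything to a pointwise bound on $f(u)-f(v)$ coming from the mean value theorem, and then close the estimate with Hölder and the Sobolev embedding $H^s_r \subset L^\infty$.

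First I would write
\[
f(u)-f(v) = (u-v)\int_0^1 f_z(v+t(u-v))\,dt + (\bar u-\bar v)\int_0^1 f_{\bar z}(v+t(u-v))\,dt,
\]
and combine this with the growth assumption (3.68) on $f'$ to obtain the pointwise bound
\[
|f(u)-f(v)| \lesssim \bigl(|u|^\sigma + |v|^\sigma\bigr)|u-v|,
\]
which is the same device used in Lemma 3.4.

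Next I would handle the two norms separately. For the $L^r$ bound, Hölder's inequality gives
\[
\|f(u)-f(v)\|_r \lesssim \bigl(\|u\|_\infty^\sigma + \|v\|_\infty^\sigma\bigr)\|u-v\|_r,
\]
and for the $L^\infty$ bound, the pointwise inequality directly yields
\[
\|f(u)-f(v)\|_\infty \lesssim \bigl(\|u\|_\infty^\sigma + \|v\|_\infty^\sigma\bigr)\|u-v\|_\infty.
\]
Both desired inequalities then follow once we know $H^s_r(\R^n)\subset L^\infty(\R^n)$, since this embedding controls both the $\|\cdot\|_\infty$ and $\|\cdot\|_r$ factors (for $\|u-v\|_r$ we of course use $H^s_r\subset L^r$ trivially).

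The only substantive check is thus the embedding $H^s_r\subset L^\infty$, which by Lemma 2.5(2) holds whenever $s>n/r$. Under the first hypothesis, $s\ge n/2$ and $r>2$ give $s\ge n/2 > n/r$; under the second, $s>n/2=n/r$ directly. Hence the embedding is available in both cases, and there is no real obstacle: the lemma is a straightforward consequence of the mean value representation, the pointwise growth of $f'$, Hölder, and Sobolev embedding. I expect the write-up to be only a few lines, with the verification of the embedding range being the only point that requires explicit mention.
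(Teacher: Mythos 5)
Your proposal is correct and follows essentially the same route as the paper's own proof: the mean value representation giving the pointwise bound $|f(u)-f(v)|\lesssim(|u|^{\sigma}+|v|^{\sigma})|u-v|$ as in Lemma 3.4, then H\"older together with the embedding $H_{r}^{s}\subset L^{\infty}$ from Lemma 2.5(2), whose validity under each of the two hypotheses you verify exactly as the paper implicitly does. No gaps.
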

\begin{proof}
Using the same argument as in Lemma 3.4, we get
\[\left|f(u)-f(v)\right|\lesssim\left(\left|u\right|^{\sigma } +\left|v\right|^{\sigma } \right)\left|u-v\right|.\]
By using the hypothesis of this lemma and Lemma 2.5 (2), there holds the embedding $H_{r}^{s}\subset L^{\infty}$. Therefore, we have
\begin{eqnarray}\begin{split}\nonumber
\left\| f(u)-f(v)\right\| _{r}&\lesssim\left\| \left(\left|u\right|^{\sigma } +\left|v\right|^{\sigma } \right)\left(u-v\right)\right\| _{r} \le\left(\left\| u\right\| _{\infty}^{\sigma } +\left\| v\right\| _{\infty}^{\sigma } \right)\left\| u-v\right\| _{r}\\
&\lesssim\left(\left\| u\right\| _{H_{r}^{s} }^{\sigma } +\left\| v\right\| _{H_{r}^{s} }^{\sigma } \right)\left\| u-v\right\| _{H_{r}^{s} }.
\end{split}\end{eqnarray}
Similarly, we also have
\begin{eqnarray}\begin{split}\nonumber
\left\| f(u)-f(v)\right\| _{\infty}&\lesssim\left\| \left(\left|u\right|^{\sigma } +\left|v\right|^{\sigma } \right)\left(u-v\right)\right\| _{\infty} \le\left(\left\| u\right\| _{\infty}^{\sigma } +\left\| v\right\| _{\infty}^{\sigma } \right)\left\| u-v\right\| _{\infty}\\
&\lesssim\left(\left\| u\right\| _{H_{r}^{s} }^{\sigma } +\left\| v\right\| _{H_{r}^{s} }^{\sigma } \right)\left\| u-v\right\| _{H_{r}^{s} }.
\end{split}\end{eqnarray}
\end{proof}

Using Lemma 3.8, Lemma 3.9, Remark 3.10 and Lemma 3.11, we have the following important estimates of the term $|x|^{-b} f(u)-|x|^{-b} f(v)$.

\begin{lem}\label{lem 3.11.}
Let $n\ge 3$, $\frac{n}{2} \le s<\frac{n}{2} +1$, $0<b<{1}+\frac{n-2s}{2} $ and $0<\sigma <\infty $. Assume that $f$ is of class $C\left(\sigma ,s,b\right)$.

\textnormal{(1)} If $f\left(z\right)$ is a polynomial in $z$ and $\bar{z}$, or if not we assume further that $\sigma \ge \left\lceil s\right\rceil $, then we have
\begin{equation} \label{GrindEQ__3_108_}
\left\| |x|^{-b} f(u)-|x|^{-b} f(v)\right\| _{S'\left(I,\;\dot{H}^{s} \right)} \lesssim\left(T^{\theta _{1} } +T^{\theta _{2} } \right)\left(\left\| u\right\| _{S\left(I,\;H^{s} \right)}^{\sigma } +\left\| v\right\| _{S\left(I,\;H^{s} \right)}^{\sigma } \right)\left\| u-v\right\| _{S\left(I,\;H^{s} \right)} ,
\end{equation}
where $I=\left[-T,\, T\right]$ and $\theta _{1} ,\, \theta _{2} >0$.

\textnormal{(2)} If $f$ is not a polynomial and $\left\lceil s\right\rceil >\sigma > \left\lceil s\right\rceil -1$, then we have
\begin{eqnarray}\begin{split} \label{GrindEQ__3_109_}
&\left\| |x|^{-b} f(u)-|x|^{-b} f(v)\right\| _{S'\left(I,\;\dot{H}^{s} \right)}\\
&~~~~~~~~~~~~~~~~~~~~\lesssim\left(T^{\theta _{1} } +T^{\theta _{2} } \right)\left(\left\| u\right\| _{S\left(I,\;H^{s} \right)}^{\sigma } +\left\| v\right\| _{S\left(I,\;H^{s} \right)}^{\sigma } \right)\left\| u-v\right\| _{S\left(I,\;H^{s} \right)}\\
&~~~~~~~~~~~~~~~~~~~~~~~+\left(\left\| u\right\| _{L^{\gamma \left(\bar{r}_{0} \right)} \left(I,\;H_{\bar{r}_{0} }^{s} \right)}^{\left\lceil s\right\rceil } +\left\| v\right\| _{L^{\gamma \left(\bar{r}_{0} \right)} \left(I,\;H_{\bar{r}_{0} }^{s} \right)}^{\left\lceil s\right\rceil } \right)\left\| u-v\right\| _{L^{\bar{\gamma }\left(\bar{r}_{0} \right)} \left(I,\;L^{\infty } \right)}^{\sigma +1-\left\lceil s\right\rceil }\\
&~~~~~~~~~~~~~~~~~~~~~~~+\left(\left\| u\right\| _{L^{\gamma \left(\bar{r}_{1} \right)} \left(I,\;H_{\bar{r}_{1} }^{s} \right)}^{\left\lceil s\right\rceil } +\left\| v\right\| _{L^{\gamma \left(\bar{r}_{1} \right)} \left(I,\;H_{\bar{r}_{1} }^{s} \right)}^{\left\lceil s\right\rceil } \right)\left\| u-v\right\| _{L^{\bar{\gamma }\left(\bar{r}_{1} \right)} \left(I,\;L^{\infty } \right)}^{\sigma +1-\left\lceil s\right\rceil },
\end{split}\end{eqnarray}
where
\begin{equation} \nonumber
2<\bar{r}_{i} <\frac{2n}{n-2},~0<\bar{\gamma }\left(\bar{r}_{i}\right)<\gamma \left(\bar{r}_{i} \right),~i=0,~1.
\end{equation}
\end{lem}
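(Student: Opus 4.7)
Plan: I would adapt the scheme of Lemma 3.6 to the range $\frac{n}{2}\le s<\frac{n}{2}+1$. Split the weight as $|x|^{-b}=\chi_{B^{C}}|x|^{-b}+\chi_B|x|^{-b}$ with $B=B(0,1)$, and bound
\[
\left\||x|^{-b}(f(u)-f(v))\right\|_{S'(I,\dot H^s)}\le C_1+C_2,
\]
where $C_1$ (resp.\ $C_2$) is the infimum over admissible pairs of the dual Strichartz norm of the exterior (resp.\ interior) piece. The decisive new ingredient, afforded by $s\ge n/2$, is the embedding $H^s_r\subset L^\infty$ available via Lemma 2.5(2) for suitable $r$; this replaces the inhomogeneous embedding $\dot H^s_r\subset L^a$ used in Lemma 3.6 and lets Lemmas 3.9 and 3.11 play the role that Lemmas 3.2 and 3.4 played there.

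For $C_1$, I would pick an admissible pair $(\gamma(\bar r_0),\bar r_0)$ with $2<\bar r_0<\frac{2n}{n-2}$ so that $\frac{1}{\bar r_0'}=\frac{s}{n}+\frac{1}{\bar r_0}$, and apply Lemma 2.1 (fractional product rule) to separate $\chi_{B^{C}}|x|^{-b}$ from $f(u)-f(v)$. One summand pairs $\|\chi_{B^{C}}|x|^{-b}\|_{\infty}$ with $\|f(u)-f(v)\|_{\dot H^s_{\bar r_0}}$, bounded by Lemma 3.9 (or by Remark 3.10 in the polynomial case); the other pairs $\|\chi_{B^{C}}|x|^{-b}\|_{\dot H^s_{n/s}}$, finite by Remark 3.5, with $\|f(u)-f(v)\|_{\bar r_0}$, bounded by Lemma 3.11. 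In the non-polynomial subcase the Lemma 3.9 estimate produces the factor $\|u-v\|_\infty^{\sigma+1-\lceil s\rceil}$; after Hölder in $t$ this becomes $\|u-v\|_{L^{\bar\gamma(\bar r_0)}(I,L^\infty)}^{\sigma+1-\lceil s\rceil}$, with $\theta_1:=\tfrac{1}{\gamma(\bar r_0)'}-\tfrac{\sigma+1}{\gamma(\bar r_0)}>0$ and $\tfrac{1}{\bar\gamma(\bar r_0)}:=\tfrac{1}{\gamma(\bar r_0)}+\tfrac{\theta_1}{\sigma+1-\lceil s\rceil}$, exactly as in Lemma 3.6.

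For $C_2$, I would fix the admissible pair $(\gamma(\bar r),\bar r)$ with $\bar r=\tfrac{2n}{n-2}$ and choose $\bar r_1\in(2,\tfrac{2n}{n-2})$ meeting the analogue of system (3.77): $\tfrac{1}{\bar r'}-\tfrac{1}{\bar r_1}>\tfrac{b}{n}$ (this, combined with Remark 3.5, places $\chi_B|x|^{-b}$ in both $L^\gamma$ and $\dot H^s_\gamma$ for suitable $\gamma$), and $\tfrac{1}{\gamma(\bar r)'}-\tfrac{\sigma+1}{\gamma(\bar r_1)}>0$ (to furnish the time exponent $\theta_2$). Apply Lemma 2.1 again to split weight from nonlinearity, invoke Lemmas 3.9, 3.11 and Remark 3.10 for the spatial pieces, and use Hölder in time to produce $T^{\theta_2}$ together with the mixed-norm term controlled by $\bar\gamma(\bar r_1)$.

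The main obstacle is showing that one can genuinely pick $\bar r_0,\bar r_1\in(2,\tfrac{2n}{n-2})$ satisfying all the scaling identities, the two Remark 3.5 conditions, and the positivity requirements $\theta_i>0$ and $\bar\gamma(\bar r_i)>0$ simultaneously. The hypothesis $0<b<1+\tfrac{n-2s}{2}$ is precisely what provides the slack needed so that, just as in Lemma 3.5 of \cite{AK21} (invoked in the proof of Lemma 3.6), the corresponding system of strict inequalities has a nonempty solution set. Once the exponents are fixed, the remainder is a routine assembly of Lemmas 3.8–3.11 together with Remarks 3.5 and 3.10, and the estimates (3.108), (3.109) follow by summing the contributions from $C_1$ and $C_2$.
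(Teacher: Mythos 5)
Your overall architecture is the paper's: the $B$/$B^{C}$ splitting into $C_{1}$ and $C_{2}$, the replacement of $\dot{H}_{r}^{s}\subset L^{a}$ by $H_{r}^{s}\subset L^{\infty}$, the use of Lemmas 3.9 and 3.11 (with Remarks 3.5 and 3.10) in place of Lemmas 3.2 and 3.4, and the time-H\"older mechanism producing $T^{\theta_{i}}$ and the $\bar{\gamma}(\bar{r}_{i})$ norms. However, your exterior estimate $C_{1}$ contains a concrete failure. The scaling relation you impose, $\frac{1}{\bar{r}_{0}'}=\frac{s}{n}+\frac{1}{\bar{r}_{0}}$, determines $\bar{r}_{0}=\frac{2n}{n-s}$ uniquely; since $s\ge n/2\ge 2$ for $n\ge 4$ (and $s$ may exceed $2$ for $n=3$), this value is $\ge\frac{2n}{n-2}$ on most of the parameter range, so no admissible pair with your property exists, and even where it does exist ($n=3$, $s<2$) one has $\bar{r}_{0}\ge 4>n$, outside the range $2<r<n$ in which Lemma 3.9 is stated for $s\notin\N$, $n\ge 3$. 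Moreover, the first product-rule summand you write, $\left\|\chi_{B^{C}}|x|^{-b}\right\|_{\infty}\left\|f(u)-f(v)\right\|_{\dot{H}_{\bar{r}_{0}}^{s}}$, violates H\"older at level $\dot{H}_{\bar{r}_{0}'}^{s}$, since $\frac{1}{\bar{r}_{0}'}\ne\frac{1}{\infty}+\frac{1}{\bar{r}_{0}}$ for $\bar{r}_{0}>2$. The consistent alternatives are to estimate $f(u)-f(v)$ in $\dot{H}_{\bar{r}_{0}'}^{s}$ with $\bar{r}_{0}'<2$, where Lemma 3.9 is unavailable, or to keep it at exponent $\bar{r}_{0}$ in both summands, which forces the weight into $L^{n/s}$, i.e. $s<b$ — false here because $b<1+\frac{n-2s}{2}\le 1\le s$.

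The paper resolves this by measuring the exterior contribution at the endpoint admissible pair $(\gamma(2),2)=(\infty,2)$, i.e. in $L^{1}(I,\dot{H}^{s})$, and applying the product rule with both weight factors at the exponent $p$ given by $\frac{1}{p}=\frac{1}{2}-\frac{1}{\bar{r}_{0}}$; the memberships $\chi_{B^{C}}|x|^{-b}\in L^{p}$ and $\chi_{B^{C}}|x|^{-b}\in\dot{H}_{p}^{s}$ then follow from the constraint $\frac{1}{2}-\frac{1}{\bar{r}_{0}}<\frac{b}{n}$ in the system (3.110), while $\bar{r}_{0}$ remains a free parameter in $\left(2,\min\left\{\frac{2n}{n-2},\,n\right\}\right)$ — the upper bound $n$, which you also omit, being exactly what lets Lemma 3.9 apply at $\bar{r}_{0}$ when $s\notin\N$. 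Your interior estimate $C_{2}$ matches the paper's system (3.112) and is fine modulo the same $\bar{r}_{1}<n$ caveat, but as written the $C_{1}$ step does not go through and needs to be redone along the lines above.
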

\begin{proof}
We use the argument similar to that used in the proof of Lemma 3.11 of \cite{AK21} and we only sketch the proof. Since the proofs of (3.108) and (3.109) are similar to each other, we only prove (3.109) whose proof is more complicated than (3.108). As in the proof of Lemma 3.6, we divide the estimate in $B$ and $B^{C} $, indeed,
\[\left\| |x|^{-b} f(u)-|x|^{-b} f(v)\right\| _{S'\left(I,\;\dot{H}^{s} \right)} \le C_{1} +C_{2} ,\]
where $C_{1}$ and $C_{2}$ are given in (3.71) and (3.72), respectively.

\noindent First, we estimate $C_{1} $.
We can easily see that there exists an admissible pair $\left(\gamma \left(\bar{r}_{0} \right),\bar{r}_{0} \right)$ satisfying the following system:
\begin{equation} \label{GrindEQ__3 110_}
\left\{\begin{array}{l} {\frac{1}{2} -\frac{1}{\bar{r}_{0} } <\frac{b}{n},} \\ {\frac{1}{\gamma \left(2\right)^{{'} } } \;-\frac{\sigma +1}{\gamma \left(\bar{r}_{0} \right)} >0,} \\
{2<\bar{r}_{0} <\min \left\{\frac{2n}{n-2},\;n \right\}.} \end{array}\right.
\end{equation}
Repeating the same argument as in the estimate $\bar{C}_{1} $ in Lemma 3.11 of \cite{AK21}, it follows from (3.110), Remark 3.5, Lemma 3.9 and Lemma 3.11 that
\begin{eqnarray}\begin{split} \label{GrindEQ__3 111_}
&\left\| \chi_{B^{C}}|x|^{-b}(f(u)-f(v))\right\| _{\dot{H}^{s}}\lesssim \left\| f(u)-f(v)\right\| _{\dot{H}_{\bar{r}_{0} }^{s} } +\left\| f(u)-f(v)\right\| _{\bar{r}_{0} }\\
&~~~~~~~~~~~~~~~~~~~\lesssim \left\| u-v\right\| _{\infty }^{\sigma -\left\lceil s\right\rceil +1} \left(\left\| u\right\| _{H_{r}^{s} }^{\left\lceil s\right\rceil} +\left\| v\right\| _{H_{r}^{s} }^{\left\lceil s\right\rceil} \right)+\left(\left\| u\right\| _{H_{r}^{s} }^{\sigma } +\left\| v\right\| _{H_{r}^{s} }^{\sigma } \right)\left\| u-v\right\| _{H_{r}^{s} }.
\end{split}\end{eqnarray}
Using (3.110), (3.111) and the argument similar to that used to estimate $C_{1} $ in Lemma 3.6, we can easily get the estimate of $C_{1}$ and we omit the details.

\noindent Next, we estimate $C_{2}$. Putting $\bar{r}=\frac{2n}{n-2}$, we can easily verify that there exists an admissible pair $\left(\gamma \left(\bar{r}_{1} \right),\;\bar{r}_{1} \right)$ satisfying the following system:
\begin{equation} \label{GrindEQ__3 112_}
\left\{\begin{array}{l} {\frac{1}{\bar{r}'} -\frac{1}{\bar{r}_{1} } >\frac{b}{n},} \\ {\frac{1}{\gamma \left(\bar{r}\right)^{{'} } } \;-\frac{\sigma +1}{\gamma \left(\bar{r}_{1} \right)} >0,} \\ {2<\bar{r}_{1} <\min \left\{\frac{2n}{n-2},\;n \right\}.} \end{array}\right.
\end{equation}
By combining the argument used to estimate $\bar{C}_{2}$ in Lemma 3.11 of \cite{AK21} with the above argument, we can get the estimate of $C_{2}$ and we omit the details.
\end{proof}

Using Lemma 3.8, Lemma 3.11 and repeating the same argument as in the proof of Lemma 3.12 in \cite{AK21}, we can get the following lemma, whose proof will be omitted.
\begin{lem}\label{lem 3.13.}
Let $n=1$, $\frac{n}{2} \le s<n$, $0<b<n-s$ and $0<\sigma <\infty $. Assume that $f$ is of class $C\left(\sigma ,s,b\right)$. Then we have
\begin{equation} \nonumber
\left\| |x|^{-b} f(u)-|x|^{-b} f(v)\right\| _{S'\left(I,\;\dot{H}^{s} \right)} \lesssim\left(T^{\theta _{1} } +T^{\theta _{2} } \right)\left(\left\| u\right\| _{S\left(I,\;H^{s} \right)}^{\sigma } +\left\| v\right\| _{S\left(I,\;H^{s} \right)}^{\sigma } \right)\left\| u-v\right\| _{S\left(I,\;H^{s} \right)} ,
\end{equation}
where $I=\left[-T,\, T\right]$ and $\theta _{1} ,\, \theta _{2} >0$.
\end{lem}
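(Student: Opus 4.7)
My plan is to mirror the strategy of Lemma 3.12 in \cite{AK21} for the single-function quantity $\||x|^{-b}f(u)\|_{S'(I,\dot H^s)}$, adapted to the two-function difference $|x|^{-b}(f(u)-f(v))$. As in Lemma 3.6 and Lemma 3.11 I would split the weight through $B=B(0,1)$ and write
\[
\||x|^{-b}(f(u)-f(v))\|_{S'(I,\dot H^s)}\le C_1+C_2,
\]
where $C_1$ is the infimum over admissible pairs of the dual Strichartz norm of $\chi_{B^C}|x|^{-b}(f(u)-f(v))$ and $C_2$ the analogous infimum for $\chi_B|x|^{-b}(f(u)-f(v))$. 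A crucial simplification in this regime is that $n=1$ and $\tfrac12\le s<1$ force $\sigma\ge 1=\lceil s\rceil$ through Definition 1.1, so the Lipschitz-type estimates of Lemma 3.8 (in $\dot H^s_r$) and of Lemma 3.11 (in $L^r$ and $L^\infty$) apply without any low-regularity Hölder remainder; this is why the conclusion of the lemma contains no analogue of the extra terms appearing in Lemma 3.11.

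For the exterior piece $C_1$, I would pick an admissible pair $(\gamma(\bar r_0),\bar r_0)$ with $\bar r_0>2$ sufficiently close to $2$ and invoke Lemma 2.1 to distribute the $\dot H^s_{\bar r_0'}$ derivative between $\chi_{B^C}|x|^{-b}$ and $f(u)-f(v)$. The weight is bounded on $B^C$, while Remark 3.5 gives $\chi_{B^C}|x|^{-b}\in \dot H^s_\gamma$ for any $\gamma$ with $n/\gamma<b+s$. For the nonlinear factor, Lemma 3.8 controls $\|f(u)-f(v)\|_{\dot H^s_{\bar r_0}}$ and Lemma 3.11 controls $\|f(u)-f(v)\|_{\bar r_0}$, both by the Lipschitz expression in $H^s_{\bar r_0}$. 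A Hölder inequality in time then produces the factor $T^{\theta_1}$, with $\theta_1>0$ guaranteed by taking $\bar r_0$ close enough to $2$ to ensure the strict inequality $(\sigma+1)/\gamma(\bar r_0)<1/\gamma(\bar r_0)'$.

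For the interior piece $C_2$, I would instead pick an admissible pair $(\gamma(\bar r_1),\bar r_1)$ with $\bar r_1>2$ together with an auxiliary exponent $\gamma$ satisfying $n/\gamma>b+s$, so that Remark 3.5 yields $\chi_B|x|^{-b}\in L^\gamma\cap\dot H^s_\gamma$ and absorbs the local singularity of $|x|^{-b}$. The embedding $H^s_{\bar r_1}\hookrightarrow L^\infty$, available since $s\ge n/2$ and $\bar r_1>2$, replaces the role of $\dot H^s_r\hookrightarrow L^a$ used in the higher-dimensional Lemma 3.6. Proceeding with Lemma 2.1, Lemma 3.8 and Lemma 3.11 in the same manner and finishing with a time Hölder inequality produces the factor $T^{\theta_2}$ with $\theta_2>0$; here the strict inequality $0<b<n-s$ provides the necessary slack.

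The only genuine obstacle is index bookkeeping: one must check that admissible pairs and auxiliary exponents can be chosen simultaneously to satisfy (i) the Hölder constraints from Lemma 2.1, (ii) the Sobolev ranges dictated by Remark 3.5 and by Lemma 3.8 / Lemma 3.11 (in particular $\bar r_i>2$), and (iii) the strict inequalities that force $\theta_1,\theta_2>0$. Precisely such a choice is made in Lemma 3.12 of \cite{AK21} under the very same hypotheses $n=1$, $n/2\le s<n$, $0<b<n-s$; since the Lipschitz estimates above scale in the same way as the pointwise estimates used there, the same indices apply verbatim, and I would omit the explicit verification.
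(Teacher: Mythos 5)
Your proposal is correct and follows essentially the same route as the paper, which likewise dispenses with this lemma by invoking Lemma 3.8, Lemma 3.11 and the index bookkeeping of Lemma 3.12 in \cite{AK21}. In particular, your observation that $n=1$ and $\tfrac12\le s<1$ force $\sigma\ge 1=\left\lceil s\right\rceil$ (so no H\"older remainder terms appear) is exactly the point the paper relies on.
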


Similarly, using Lemma 3.9 and Lemma 3.11, we also have the following Lemma, whose proof will be omitted.
\begin{lem}\label{lem 3.14.}
Let $n=2$, $s=1$, $0<b<n-s$ and $0<\sigma <\infty $. Assume that $f$ is of class $C\left(\sigma ,s,b\right)$.

\textnormal{(1)} If $\sigma \ge 1$, then we have
\begin{equation} \nonumber
\left\| |x|^{-b} f(u)-|x|^{-b} f(v)\right\| _{S'\left(I,\;\dot{H}^{s} \right)} \lesssim\left(T^{\theta _{1} } +T^{\theta _{2} } \right)\left(\left\| u\right\| _{S\left(I,\;H^{s} \right)}^{\sigma } +\left\| v\right\| _{S\left(I,\;H^{s} \right)}^{\sigma } \right)\left\| u-v\right\| _{S\left(I,\;H^{s} \right)} ,
\end{equation}
where $I=\left[-T,\, T\right]$ and $\theta _{1} ,\, \theta _{2} >0$.

\textnormal{(2)} If $0<\sigma<1$, then we have
\begin{eqnarray}\begin{split} \nonumber
&\left\| |x|^{-b} f(u)-|x|^{-b} f(v)\right\| _{S'\left(I,\;\dot{H}^{s} \right)}\\
&~~~~~~~~~~~\lesssim\left(T^{\theta _{1} } +T^{\theta _{2} } \right)\left(\left\| u\right\| _{S\left(I,\;H^{s} \right)}^{\sigma } +\left\| v\right\| _{S\left(I,\;H^{s} \right)}^{\sigma } \right)\left\| u-v\right\| _{S\left(I,\;H^{s} \right)}\\
&~~~~~~~~~~~~~+\left(\left\| u\right\| _{L^{\gamma \left(\bar{r}_{2} \right)} \left(I,\;H_{\bar{r}_{2} }^{s} \right)}^{\left\lceil s\right\rceil } +\left\| v\right\| _{L^{\gamma \left(\bar{r}_{2} \right)} \left(I,\;H_{\bar{r}_{2} }^{s} \right)}^{\left\lceil s\right\rceil } \right)\left\| u-v\right\| _{L^{\bar{\gamma }\left(\bar{r}_{2} \right)} \left(I,\;L^{\infty } \right)}^{\sigma +1-\left\lceil s\right\rceil }\\
&~~~~~~~~~~~~~+\left(\left\| u\right\| _{L^{\gamma \left(\bar{r}_{3} \right)} \left(I,\;H_{\bar{r}_{3} }^{s} \right)}^{\left\lceil s\right\rceil } +\left\| v\right\| _{L^{\gamma \left(\bar{r}_{3} \right)} \left(I,\;H_{\bar{r}_{3} }^{s} \right)}^{\left\lceil s\right\rceil } \right)\left\| u-v\right\| _{L^{\bar{\gamma }\left(\bar{r}_{3} \right)} \left(I,\;L^{\infty } \right)}^{\sigma +1-\left\lceil s\right\rceil },
\end{split}\end{eqnarray}
where
\begin{equation} \nonumber
2<\bar{r}_{i} <\infty ,~0<\bar{\gamma }\left(\bar{r}_{i} \right)<\gamma \left(\bar{r}_{i} \right),~ i=2,~3.
\end{equation}
\end{lem}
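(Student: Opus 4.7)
The plan is to mimic the structure of Lemma 3.11 and Lemma 3.13, adapting it to the two-dimensional endpoint setting $n=2$, $s=1$. Split
\[
\bigl\| |x|^{-b}(f(u)-f(v)) \bigr\|_{S'(I,\dot H^s)} \le C_{1} + C_{2},
\]
where $C_1,C_2$ are the $B^C$- and $B$-localised dual Strichartz norms as in (3.71)–(3.72), and then handle each piece separately by selecting admissible pairs $(\gamma(\bar r_i),\bar r_i)$ with $2<\bar r_i<\infty$ (the endpoint $\bar r=\infty$ is unavailable in dimension two, which is the source of all the technical pain).

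For the $B^{C}$ piece I would invoke Remark 3.5 to place $\chi_{B^{C}}|x|^{-b}\in L^{\infty}\cap \dot H^{s}_{\gamma}$ for $\gamma<2/(b+1)$, then apply Lemma 2.1 (fractional product rule) to reduce matters to estimating $\|f(u)-f(v)\|_{\dot H^{s}_{\bar r_2'}}$ and $\|f(u)-f(v)\|_{\bar r_2'}$. Since $s=1\in\N$, Lemma 3.9 allows any $r>2$ for the Sobolev bound, and Lemma 3.11 (which applies since $s\ge n/2$ and $r>2$) handles the Lebesgue bound. For $\sigma\ge 1=\lceil s\rceil$ the estimate (3.94) gives Lipschitz control directly; for $0<\sigma<1$ I must invoke the full (3.93), noting that then $\min\{\sigma-\lceil s\rceil+1,1\}=\sigma$ and $\max\{\lceil s\rceil,\sigma\}=1$, so the $\|u-v\|_{\infty}^{\sigma}$ factor naturally produces the $L^{\infty}$-in-space terms appearing in case (2). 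A Hölder-in-time decomposition as in (3.74)–(3.75) of Lemma 3.6 then yields the exponent $\bar\gamma(\bar r_2)<\gamma(\bar r_2)$ and a positive time power $T^{\theta_1}$.

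For the $B$ piece the analysis is parallel, but now Remark 3.5 places $\chi_B|x|^{-b}\in \dot H^{s}_{\gamma}$ only for $\gamma>2/(b+1)$, so a different admissible pair $(\gamma(\bar r_3),\bar r_3)$ must be chosen. The selection is governed by a system of strict inequalities analogous to (3.110) or (3.112) — schematically
\[
\tfrac{1}{\bar r'}-\tfrac{1}{\bar r_3}>\tfrac{b}{n},\qquad \tfrac{1}{\gamma(\bar r)'}-\tfrac{\sigma+1}{\gamma(\bar r_3)}>0,\qquad 2<\bar r_3<\infty,
\]
(with $\bar r$ a generic admissible exponent serving as the outer Strichartz pair) — whose solvability reduces to the hypothesis $0<b<n-s=1$. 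Once $\bar r_3$ is fixed, Lemma 3.9 and Lemma 3.11 close the estimate as in the first piece, yielding the $T^{\theta_2}$ contribution.

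The main obstacle is precisely this choice of admissible pairs: in dimension two the Sobolev exponent $2n/(n-2)=\infty$ used so efficiently in Lemma 3.11 is excluded, so I cannot use a "sharp" dual pair and must instead verify that some $\bar r_i$ strictly less than $\infty$ simultaneously (i) tames the singularity of $|x|^{-b}$ via Remark 3.5, (ii) leaves room in the Strichartz time exponent to produce positive $\theta_1,\theta_2$, and (iii) lands in the range where Lemma 3.9 and Lemma 3.11 are applicable. The constraint $b<1$ is what keeps this window open; once it is verified, the two cases in the statement follow by assembling $C_1+C_2$ exactly as in the proofs of Lemma 3.6 and Lemma 3.11, with the Sobolev embedding $H^{1}_{\bar r_i}\hookrightarrow L^{\infty}$ (valid since $\bar r_i>2=n$) converting the $\|u-v\|_{\infty}$ factor from Lemma 3.9 into the Strichartz-type $L^{\bar\gamma(\bar r_i)}(I,L^{\infty})$ norm appearing in the conclusion.
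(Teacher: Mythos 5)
Your proposal follows exactly the route the paper intends: the paper in fact omits the proof of this lemma, stating only that it follows from Lemma 3.9 and Lemma 3.11 by the same $B$/$B^{C}$ decomposition and choice of admissible pairs used in the neighbouring lemmas, which is precisely what you carry out (including the correct observation that for $0<\sigma<1$ one has $\min\{\sigma-\lceil s\rceil+1,1\}=\sigma$ and $\max\{\lceil s\rceil,\sigma\}=1$, producing the two extra $L^{\bar\gamma}(I,L^{\infty})$ terms). One small correction: in Remark 3.5 the conditions are $\frac{n}{\gamma}>b+s$ for the interior piece $\chi_{B}|x|^{-b}$ and $\frac{n}{\gamma}<b+s$ (i.e.\ $\gamma>2/(b+1)$ when $n=2$, $s=1$) for the exterior piece $\chi_{B^{C}}|x|^{-b}$ --- you state both the wrong way round in the prose, though the displayed system of inequalities you actually use for the $B$ piece is the correct one and the argument is unaffected.
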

\begin{lem}\label{lem 3.15.}
Let $n=2$, $1<s<2$, $0<b<n-s$ and $0<\sigma <\infty $. Assume that $f$ is of class $C\left(\sigma ,s,b\right)$.

\textnormal{(1)} If $f\left(z\right)$ is a polynomial in $z$ and $\bar{z}$, or if not we assume further that $\sigma \ge 2$, then we have
\begin{equation} \nonumber
\left\| |x|^{-b} f(u)-|x|^{-b} f(v)\right\| _{S'\left(I,\;\dot{H}^{s} \right)} \lesssim\left(T^{\theta _{1} } +T^{\theta _{2} } \right)\left(\left\| u\right\| _{S\left(I,\;H^{s} \right)}^{\sigma } +\left\| v\right\| _{S\left(I,\;H^{s} \right)}^{\sigma } \right)\left\| u-v\right\| _{S\left(I,\;H^{s} \right)} ,
\end{equation}
where $I=\left[-T,\, T\right]$ and $\theta _{1} ,\, \theta _{2} >0$.

\textnormal{(2)} If $f$ is not a polynomial and $1<\sigma<2$, then we have
\begin{eqnarray}\begin{split} \nonumber
&\left\| |x|^{-b} f(u)-|x|^{-b} f(v)\right\| _{S'\left(I,\;\dot{H}^{s} \right)}\lesssim\left(T^{\theta _{1} } +T^{\theta _{2} } \right)\left(\left\| u\right\| _{S\left(I,\;H^{s} \right)}^{\sigma } +\left\| v\right\| _{S\left(I,\;H^{s} \right)}^{\sigma } \right)\left\| u-v\right\| _{S\left(I,\;H^{s} \right)}\\
&~~~~~~~~~~~~~~~~~~~~~~~+\left(\left\| u\right\| _{L^{\infty} \left(I,\;H^{s} \right)}^{\left\lceil s\right\rceil } +\left\| v\right\| _{L^{\infty} \left(I,\;H^{s} \right)}^{\left\lceil s\right\rceil } \right)\left\| u-v\right\| _{L^{\beta_{1}} \left(I,\;L^{\infty } \right)\bigcap L^{\beta_{2}} \left(I,\;L^{\infty } \right)}^{\sigma +1-\left\lceil s\right\rceil }.
\end{split}\end{eqnarray}
where $0<\beta_{1},\; \beta_{2}<\infty$.
\end{lem}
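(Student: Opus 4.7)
The plan is to follow the scheme of Lemma~3.11 (the $n\ge 3$ analogue), adapting the admissible-pair choices to $n=2$, and using Lemma~3.9, Lemma~3.11, and Remark~3.10 as the pointwise-in-time nonlinear estimates. Write $|x|^{-b}=\chi_{B}|x|^{-b}+\chi_{B^{C}}|x|^{-b}$ with $B=B(0,1)$, so
\[
\left\| |x|^{-b}(f(u)-f(v))\right\|_{S'(I,\dot H^{s})}\le C_{1}+C_{2},
\]
where $C_{1}$ and $C_{2}$ are the $\chi_{B^{C}}$ and $\chi_{B}$ contributions respectively. Note that since $1<s<2=n$, we have the embedding $H^{s}\subset L^{\infty}$ and $\lceil s\rceil=2$.

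For $C_{1}$, I would select an admissible pair $(\gamma(\bar r_{2}),\bar r_{2})$ with $2<\bar r_{2}<\infty$ satisfying $\tfrac{1}{2}-\tfrac{1}{\bar r_{2}}<\tfrac{b}{n}$ (so that $\chi_{B^{C}}|x|^{-b}\in L^{r_{1}}$ with $1/r_{1}=1/\bar r_{2}'-1/2$), $\tfrac{n}{\bar r_{2}'}<b+s$ (so that $\chi_{B^{C}}|x|^{-b}\in\dot H^{s}_{\bar r_{2}'}$ by Remark~3.5), and the gap condition $\tfrac{1}{\gamma(2)'}-\tfrac{\sigma+1}{\gamma(\bar r_{2})}>0$ providing a positive power of $T$. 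Apply Lemma~2.1 (fractional product rule) with $(r_{1},p_{1})=(r_{1},2)$ and $(r_{2},p_{2})=(\bar r_{2}',\infty)$ to obtain
\[
\|\chi_{B^{C}}|x|^{-b}(f(u)-f(v))\|_{\dot H^{s}_{\bar r_{2}'}}\lesssim \|\chi_{B^{C}}|x|^{-b}\|_{r_{1}}\|f(u)-f(v)\|_{\dot H^{s}_{2}}+\|\chi_{B^{C}}|x|^{-b}\|_{\dot H^{s}_{\bar r_{2}'}}\|f(u)-f(v)\|_{\infty}.
\]
Lemma~3.9 (applied with $r=2$, the only legal choice when $s\notin\mathbb{N}$ and $n=2$) bounds the first spatial norm by $\|u-v\|_{\infty}^{\sigma-1}(\|u\|_{H^{s}}^{2}+\|v\|_{H^{s}}^{2})+(\|u\|_{H^{s}}^{\sigma}+\|v\|_{H^{s}}^{\sigma})\|u-v\|_{H^{s}}$, while Lemma~3.11 handles the $L^{\infty}$ piece. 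Taking $L^{\gamma(\bar r_{2})'}$ in time and using Hölder with the gap condition, the "clean" contribution yields $T^{\theta_{1}}(\|u\|_{S(I,H^{s})}^{\sigma}+\|v\|_{S(I,H^{s})}^{\sigma})\|u-v\|_{S(I,H^{s})}$; the remaining term containing $\|u-v\|_{\infty}^{\sigma-1}$ gets redistributed via Hölder in $t$ as $\|u-v\|_{L^{\beta_{1}}(I,L^{\infty})}^{\sigma+1-\lceil s\rceil}\bigl(\|u\|_{L^{\infty}(I,H^{s})}^{\lceil s\rceil}+\|v\|_{L^{\infty}(I,H^{s})}^{\lceil s\rceil}\bigr)$, where $\beta_{1}$ is defined by $\frac{1}{\gamma(\bar r_{2})'}=\frac{\lceil s\rceil}{\infty}+\frac{\sigma+1-\lceil s\rceil}{\beta_{1}}$, which is finite because $\gamma(\bar r_{2})<\infty$.

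For $C_{2}$, proceed analogously with an admissible pair $(\gamma(\bar r_{3}),\bar r_{3})$ satisfying the opposite integrability near the origin: $\tfrac{1}{2}-\tfrac{1}{\bar r_{3}}>\tfrac{b}{n}$ and $\tfrac{n}{\bar r_{3}'}>b+s$ (again using Remark~3.5 for $\chi_{B}|x|^{-b}$), together with $\tfrac{1}{\gamma(\tilde r)'}-\tfrac{\sigma+1}{\gamma(\bar r_{3})}>0$ for some convenient reference admissible pair $(\gamma(\tilde r),\tilde r)$; since $n=2$ the endpoint used in Lemma~3.11 is unavailable, so one picks $\tilde r$ large but finite and verifies the constraints are simultaneously solvable using $b<n-s=2-s$. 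The same splitting and the same Lemma~3.9/3.11 estimates produce the corresponding $\beta_{2}$. In Case~(1), when $f$ is polynomial one invokes Remark~3.10 in place of Lemma~3.9, and when $\sigma\ge\lceil s\rceil=2$ the sharper conclusion (3.94) of Lemma~3.9 applies; in either situation the $\|u-v\|_{\infty}^{\sigma-1}$ term is absent and one lands directly in the clean bound of part~(1).

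The main obstacle is that, unlike the $n\ge 3$ case of Lemma~3.11 where Lemma~3.9 admits a range $2<r<n$, here the constraint $s\notin\mathbb{N}$, $n=2$ in Lemma~3.9 forces $r=2$. This kills the flexibility one had to absorb $\|u-v\|^{\sigma+1-\lceil s\rceil}$ into an $L^{a_{i}}$ space via Sobolev embedding $\dot H^{s}_{r_{i}}\subset L^{a_{i}}$; the compensating mechanism is the Sobolev embedding $H^{s}\subset L^{\infty}$ valid for $s>n/2=1$, which lets us place the corresponding term directly in $L^{\infty}_{x}$. The bulk of the work is the bookkeeping verification that $\bar r_{2},\bar r_{3},\tilde r,\beta_{1},\beta_{2}$ can be chosen to satisfy simultaneously the $|x|^{-b}$-integrability conditions, the gap conditions yielding $\theta_{1},\theta_{2}>0$, and the constraint $0<\beta_{1},\beta_{2}<\infty$.
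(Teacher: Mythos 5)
Your proposal is correct and follows essentially the same route as the paper: the $B$/$B^{C}$ decomposition, Lemma 2.1 to peel off the weight, Lemma 3.9 with the forced choice $r=2$ together with Lemma 3.11 (and Remark 3.10 or (3.94) in case (1)), the embedding $H^{s}\subset L^{\infty}$ to absorb the non-Lipschitz factor, and H\"older in time to produce $T^{\theta_i}$ and the exponents $\beta_i$. The only (harmless) deviations are that for $C_{1}$ the paper simply uses the endpoint pair $(\gamma(2),2)=(\infty,2)$, placing $\chi_{B^{C}}|x|^{-b}$ in $L^{\infty}\cap\dot H^{s}$, rather than a nearby pair $(\gamma(\bar r_{2}),\bar r_{2})$, and your gap condition for $C_{1}$ is written with the roles of $\gamma(2)$ and $\gamma(\bar r_{2})$ interchanged relative to your own H\"older step, though either pairing yields a positive power of $T$.
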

\begin{proof}
We only consider the case (2). The case (1) is treated similarly.
As in the above lemmas, we divide the estimate in $B$ and $B^{C} $, indeed,
\[\left\| |x|^{-b} f(u)-|x|^{-b} f(v)\right\| _{S'\left(I,\;\dot{H}^{s} \right)} \le C_{1} +C_{2} ,\]
where $C_{1}$ and $C_{2}$ are given in (3.71) and (3.72) respectively.
First, we estimate $C_{1}$. It is obvious that $\chi_{B^{C}}|x|^{-b}\in L^{\infty}$. We can also see that $\chi_{B^{C}}|x|^{-b}\in \dot{H}^{s}$, since $\frac{n}{2}<s<b+s$.
It follows from Lemma 2.1, Lemma 3.9 and Lemma 3.11 that
\begin{eqnarray}\begin{split} \label{GrindEQ__3 113_}
&\left\|\chi_{B^{C}}|x|^{-b}(f(u)-f(v))\right\| _{\dot{H}^{s}}\\
&~~~~~\lesssim \left\|\chi_{B^{C}}|x|^{-b}\right\|_{L^{\infty} }\left\| f(u)-f(v)\right\| _{\dot{H}^{s}} +\left\| \chi_{B^{C}}|x|^{-b}\right\|_{\dot{H}^{s}}\left\| f(u)-f(v)\right\| _{\infty}\\
&~~~~~\lesssim \left\| u-v\right\| _{\infty }^{\sigma-1} \left(\left\| u\right\| _{H^{s} }^{2} +\left\| v\right\| _{H^{s} }^{2} \right)+\left(\left\| u\right\| _{H^{s} }^{\sigma } +\left\| v\right\| _{H^{s} }^{\sigma } \right)\left\| u-v\right\| _{H^{s} }.
\end{split}\end{eqnarray}
Using (3.113) and H\"{o}lder inequality, we have
\begin{eqnarray}\begin{split} \label{GrindEQ__3 114_}
C_{1}&\le \left\|\chi_{B^{C}}|x|^{-b}(f(u)-f(v))\right\| _{L^{\gamma \left(2\right)^{{'} } } \left(I,\;\dot{H}^{s}\right)}\\
&\lesssim T\left(\left\| u\right\| _{L^{\infty}\left(I,\;H^{s} \right)}^{\sigma } +\left\| v\right\| _{L^{\infty}\left(I,\;H^{s} \right)}^{\sigma } \right)\left\| u-v\right\| _{L^{\infty}\left(I,\;H^{s} \right)}\\
&~~~+\left(\left\| u\right\| _{L^{\infty} \left(I,\;H^{s} \right)}^{2} +\left\| v\right\| _{L^{\infty} \left(I,\;H^{s} \right)}^{2} \right)\left\| u-v\right\| _{L^{\beta_{1}} \left(I,\;L^{\infty } \right)}^{\sigma-1},
\end{split}\end{eqnarray}
where $\beta_{1}=\sigma-1$.
Next, we estimate $C_{2}$.
Since $b+s<n$, we can take $\hat{r}>2$ large enough such that
\begin{equation} \nonumber
\frac{b+s}{n} +\frac{1}{\hat{r}} <1.
\end{equation}
Putting $\frac{1}{p}=\frac{1}{\hat{r}'}-\frac{1}{2}$, we can see that
$$
\frac{1}{p}=\frac{1}{\hat{r}'}-\frac{1}{2}>\frac{b+s}{n}-\frac{1}{2}>\frac{b}{n},
$$
which implies that $\chi_{B}|x|^{-b} \in L^{p}$. Since $\frac{1}{\hat{r}'}-\frac{b+s}{n}>0$, we can see that $\chi_{B}|x|^{-b}\in \dot{H}_{\hat{r}'}^{s}$.
It follows from Lemma 2.1, Lemma 3.9 and Lemma 3.11 that
\begin{eqnarray}\begin{split} \label{GrindEQ__3 115_}
&\left\| \chi_{B}|x|^{-b} (f(u)-f(v))\right\| _{\dot{H}_{\hat{r}'}^{s}}\\
&~~~~~\lesssim \left\|\chi_{B}|x|^{-b}\right\|_{L^{p}}\left\| f(u)-f(v)\right\| _{\dot{H}^{s}} +\left\|\chi_{B}|x|^{-b}\right\|_{{\dot{H}_{\hat{r}'}^{s} }}\left\| f(u)-f(v)\right\| _{\infty}\\
&~~~~~\lesssim \left\| u-v\right\| _{\infty }^{\sigma-1} \left(\left\| u\right\| _{H^{s} }^{2} +\left\| v\right\| _{H^{s} }^{2} \right)+\left(\left\| u\right\| _{H^{s} }^{\sigma } +\left\| v\right\| _{H^{s} }^{\sigma } \right)\left\| u-v\right\| _{H^{s} }.
\end{split}\end{eqnarray}
Using (3.115) and H\"{o}lder inequality, we have
\begin{eqnarray}\begin{split} \label{GrindEQ__3 116_}
C_{2}&\le \left\| |x|^{-b} f(u)-|x|^{-b} f(v)\right\| _{L^{\gamma \left(\hat{r}\right)^{{'} } } \left(I,\;{\dot{H}_{\hat{r}'}^{s} \left(B\right)}\right)}\\
&\lesssim T^{\theta _{2}}\left(\left\| u\right\| _{L^{\infty}\left(I,\;H^{s} \right)}^{\sigma } +\left\| v\right\| _{L^{\infty}\left(I,\;H^{s} \right)}^{\sigma } \right)\left\| u-v\right\| _{L^{\infty}\left(I,\;H^{s} \right)}\\
&~~~+\left(\left\| u\right\| _{L^{\infty} \left(I,\;H^{s} \right)}^{2} +\left\| v\right\| _{L^{\infty} \left(I,\;H^{s} \right)}^{2} \right)\left\| u-v\right\| _{L^{\beta_{1}} \left(I,\;L^{\infty } \right)}^{\sigma-1},
\end{split}\end{eqnarray}
where $\theta _{2}=\frac{1}{\gamma \left(\hat{r}\right)^{{'} }}$ and $\beta_{2}=\left(\sigma-1\right)\gamma \left(\hat{r}\right)^{{'} }$.
\end{proof}
Now we are ready to prove (3.2) in the case $\frac{n}{2} \le s<\min \left\{\frac{n}{2} +1,\, n\right\}$.

\begin{proof}[\textbf{Proof of (3.2) in the case:}]$\frac{n}{2} \le s<\min \left\{\frac{n}{2} +1,\, n\right\}$

We use the argument similar to that used in the case $s<n/2$ and we only sketch the proof.

\textbf{Case 1.} if $f\left(z\right)$ is a polynomial in $z$ and $\bar{z}$, or if not we assume further that $\sigma \ge \left\lceil s\right\rceil $, then there are possible four cases:

$\cdot n\ge3$ and $n/2\le s<1+n/2$,

$\cdot n=2$ and $s=1$,

$\cdot n=2$ and $1<s<2$,

$\cdot n=1$ and $1/2\le s<1$.

In each case, using one of Lemma 3.12--Lemma 3.15 and the same argument as in the case $s<n/2$, we can easily see that the solution flow is locally Lipschitz and we omit the details.

\textbf{Case 2.} if $f$ is not a polynomial and $\left\lceil s\right\rceil >\sigma > \left\lceil s\right\rceil -1$, then there are possible three cases:

$\cdot n\ge3$ and $n/2\le s<1+n/2$,

$\cdot n=2$ and $s=1$,

$\cdot n=2$ and $1<s<2$.

First, we consider the case $n\ge3$. It follows from Lemma 3.11 of \cite{AK21} and Lemma 3.12 that
\begin{eqnarray}\begin{split} \label{GrindEQ__3_117_}
&\left\| |x|^{-b} f\left(u_{m} \right)-|x|^{-b} f(u)\right\| _{S'\left(I,\;H^{s} \right)}\\
&~~~~~~~~\lesssim\left(T^{\theta _{1} } +T^{\theta _{2} } \right)\left(\left\| u_{m} \right\| _{S\left(I,\;H^{s} \right)}^{\sigma } +\left\| u\right\| _{S\left(I,\;H^{s} \right)}^{\sigma } \right)\left\| u_{m} -u\right\| _{S\left(I,\;H^{s} \right)}\\
&~~~~~~~~~~~+\left(\left\| u_{m} \right\| _{L^{\gamma \left(\bar{r}_{0} \right)} \left(I,\;H_{\bar{r}_{0} }^{s} \right)}^{\left\lceil s\right\rceil } +\left\| u\right\| _{L^{\gamma \left(\bar{r}_{0} \right)} \left(I,\;H_{\bar{r}_{0} }^{s} \right)}^{\left\lceil s\right\rceil } \right)\left\| u_{m} -u\right\| _{L^{\bar{\gamma }\left(\bar{r}_{0} \right)} \left(I,\;L^{\infty } \right)}^{\sigma +1-\left\lceil s\right\rceil }\\
&~~~~~~~~~~~+\left(\left\| u_{m} \right\| _{L^{\gamma \left(\bar{r}_{1} \right)} \left(I,\;H_{\bar{r}_{1} }^{s} \right)}^{\left\lceil s\right\rceil } +\left\| u\right\| _{L^{\gamma \left(\bar{r}_{1} \right)} \left(I,\;H_{\bar{r}_{1} }^{s} \right)}^{\left\lceil s\right\rceil } \right)\left\| u_{m} -u\right\| _{L^{\bar{\gamma }\left(\bar{r}_{1} \right)} \left(I,\;L^{\infty } \right)}^{\sigma +1-\left\lceil s\right\rceil },
\end{split}\end{eqnarray}
where
$$
2<\bar{r}_{i} <\frac{2n}{n-2},~0<\bar{\gamma }\left(\bar{r}_{i}\right)<\gamma \left(\bar{r}_{i} \right),~i=0,~1.
$$
For $i=0,\, 1$, we can take $\bar{\eta }_{i} >0$ sufficiently small such that
\begin{equation} \label{GrindEQ__3_118_}
2<\bar{\eta }_{i} +\bar{r}_{i} <\frac{2n}{n-2},~\bar{\gamma }\left(\bar{r}_{i} \right)<\gamma \left(\bar{\eta }_{i} +\bar{r}_{i} \right)<\gamma \left(\bar{r}_{i} \right),~s-\frac{\bar{\eta }_{i} n}{\bar{r}_{i} \left(\bar{r}_{i} +\bar{\eta }_{i} \right)} >0.
\end{equation}
Since
\[s-\frac{\bar{\eta }_{i} n}{\bar{r}_{i} \left(\bar{r}_{i} +\bar{\eta }_{i} \right)} -\frac{n}{\bar{r}_{i} +\bar{\eta }_{i} } =s-\frac{n}{\bar{r}_{i} } >s-\frac{n}{2} \ge 0,\]
it follows from Lemma 2.5 (2) that $H_{\bar{r}_{i} +\bar{\eta }_{i} }^{s-\frac{\bar{\eta }_{i} n}{\bar{r}_{i} \left(\bar{r}_{i} +\bar{\eta }_{i} \right)} } \subset L^{\infty } $. Thus using (3.118) and Theorem 1.7, we can see that as $m\to \infty $,
\begin{equation} \label{GrindEQ__3_119_}
\left\| u_{m} -u\right\| _{L^{\bar{\gamma }\left(r_{i} \right)} \left(I,\;L^{\infty } \right)} \lesssim T^{\bar{\alpha }_{i} } \left\| u_{m} -u\right\| _{L^{\gamma \left(\bar{r}_{i} +\bar{\eta }_{i} \right)} \left(I,\, H_{\bar{r}_{i} +\bar{\eta }_{i} }^{s-\frac{\bar{\eta }_{i} n}{\bar{r}_{i} \left(\bar{r}_{i} +\bar{\eta }_{i} \right)} } \right)} \to 0,
\end{equation}
where $\bar{\alpha }_{i} =\frac{1}{\bar{\gamma }\left(\bar{r}_{i} \right)} -\frac{1}{\gamma \left(\bar{r}_{i} +\bar{\eta }_{i} \right)} >0$. Using (3.117) and (3.119), we can get the desired result and we omitted the details.

When $n=2$ and $s=1$, we can get the desired result by using Lemma 3.14 and the above argument.

Finally, we consider the case: $n=2$ and $1<s<2$. It follows from Lemma 3.12 of \cite{AK21} and Lemma 3.15 that
\begin{eqnarray}\begin{split} \label{GrindEQ__3_120_}
&\left\| |x|^{-b} f(u)-|x|^{-b} f(v)\right\| _{S'\left(I,\;\dot{H}^{s} \right)}\lesssim\left(T^{\theta _{1} } +T^{\theta _{2} } \right)\left(\left\| u\right\| _{S\left(I,\;H^{s} \right)}^{\sigma } +\left\| v\right\| _{S\left(I,\;H^{s} \right)}^{\sigma } \right)\left\| u-v\right\| _{S\left(I,\;H^{s} \right)}\\
&~~~~~~~~~~~~~~~~~~~~~~~+\left(\left\| u\right\| _{L^{\infty} \left(I,\;H^{s} \right)}^{\left\lceil s\right\rceil } +\left\| v\right\| _{L^{\infty} \left(I,\;H^{s} \right)}^{\left\lceil s\right\rceil } \right)\left\| u-v\right\| _{L^{\beta_{1}} \left(I,\;L^{\infty } \right)\bigcap L^{\beta_{2}} \left(I,\;L^{\infty } \right)}^{\sigma +1-\left\lceil s\right\rceil }.
\end{split}\end{eqnarray}
where $0<\beta_{1},\; \beta_{2}<\gamma \left(2\right)=\infty$.
We can take $\bar{\eta }_{3} >0$ sufficiently small such that
\begin{equation} \label{GrindEQ__3_121_}
2+\bar{\eta }_{3}<\infty,~\beta_{i}<\gamma \left(\bar{\eta }_{3} +2 \right)<\gamma \left(2\right),~s-\frac{\bar{\eta }_{3}n}{2\left(2 +\bar{\eta }_{3} \right)} >0,
\end{equation}
for $i=1,\;2$. Since
\[s-\frac{\bar{\eta }_{3}n}{2\left(2 +\bar{\eta }_{3}\right)}-\frac{n}{2 +\bar{\eta }_{3}} =s-\frac{n}{2} >0,\]
there holds the embedding $H_{2+\bar{\eta }_{3} }^{s-\frac{\bar{\eta }_{3} n}{2 \left(2+\bar{\eta }_{3} \right)} } \subset L^{\infty } $. Thus using (3.121) and Theorem 1.7, we can see that as $m\to \infty $,
\begin{equation} \label{GrindEQ__3_122_}
\left\| u_{m} -u\right\| _{L^{\beta_{1}} \left(I,\;L^{\infty } \right)\bigcap L^{\beta_{1}} \left(I,\;L^{\infty } \right)} \lesssim \left(T^{\bar{\beta}_{1}}+T^{\bar{\beta}_{2} }\right) \left\| u_{m} -u\right\| _{L^{\gamma \left(2 +\bar{\eta }_{3} \right)} \left(I,\, H_{2+\bar{\eta }_{3} }^{s-\frac{\bar{\eta }_{3} n}{2 \left(2+\bar{\eta }_{3} \right)} } \right)} \to 0,
\end{equation}
where $\bar{\beta }_{i} =\frac{1}{\beta_{i}} -\frac{1}{\gamma \left(2+\bar{\eta }_{3} \right)} >0$. Using (3.120) and (3.122), we can get the desired result and we omitted the details.
\end{proof}


\end{document}